\documentclass{article}
\usepackage[utf8]{inputenc}
\usepackage{amsmath}
\usepackage{amssymb}
\usepackage{amsthm}
\usepackage{xcolor}
\usepackage{fullpage}
\usepackage{hyperref}
\usepackage{tikz}
\usepackage{soul}
\usepackage{mathrsfs}
\usepackage{enumitem}

\newtheorem{theorem}{Theorem}[section]
\newtheorem{Lemma}[theorem]{Lemma}
\newtheorem{corollary}[theorem]{Corollary}
\newtheorem{lemma}[theorem]{Lemma}
\newtheorem{definition}[theorem]{Definition}
\newtheorem{proposition}[theorem]{Proposition}
\newtheorem{remark}[theorem]{Remark}
\newtheorem{example}[theorem]{Example}

\newtheorem*{theorem*}{Theorem}

\title{Spectral multipliers for maximally subelliptic operators}
\author{Lingxiao Zhang}

\begin{document}
\maketitle

\begin{abstract}
Consider a non-negative, self-adjoint, maximally subelliptic operator on a compact manifold. We show that the spectral multiplier is a singular integral operator under an appropriate Mihlin-H\"ormander type condition. We establish the equivalence between non-isotropic Besov and Triebel-Lizorkin spaces adapted to the operator and those adapted to a Carnot-Carath\'eodory geometry on the manifold. We also give a Mihlin-H\"ormander type condition for the boundedness of the spectral multiplier on non-isotropic $L^p$ Sobolev spaces. 
\end{abstract}

\section{Introduction}
Let $X$ be a connected compact $C^\infty$ manifold endowed with a strictly positive smooth density $\mu$. Consider $(W,d):=\{(W_1, d_1), \ldots, (W_\nu, d_\nu)\}$ consisting of finitely many smooth vector fields $W_i$ on $X$ each paired with a formal degree $d_i \in \mathbb{N}_+=\{1,2,\ldots\}$. Let $W_1, \ldots, W_\nu$ satisfy H\"ormander's condition, i.e., the Lie algebra generated by these vector fields spans the tangent space at every point in $X$. By Nagel, Stein, and Wainger \cite{NSW85}, $(W,d)$ gives rise to Carnot-Carath\'eodory balls and a Carnot-Carath\'eodory metric on $X$ as follows.

\begin{definition}
The Carnot-Carath\'eodory ball is defined as the set
\begin{align*}
B(x,\delta) :=\Big\{ y\in X \Big| & \exists \gamma:[0,1] \to X, \gamma(0) = x, \gamma(1) = y,\\
& \gamma \text{ is absolutely continuous},\\
& \gamma'(t) = \sum_{i=1}^\nu a_i(t) \delta^{d_i} W_i(\gamma(t)) \text{ almost everywhere},\\
& a_i\in L^\infty[0,1], \Big\| \Big(\sum_{i=1}^\nu |a_i|^2 \Big)^{\frac{1}{2}}\Big\|_{L^\infty[0,1]}<1 \Big\}.
\end{align*}
\end{definition}

\begin{definition}
The Carnot-Carath\'eodory metric is defined as
$$
\rho(x,y)= \inf \{\delta>0: y\in B(x,\delta)\}.
$$
\end{definition}

It is a result of Chow \cite{chow} that $\rho$ is a metric; in particular, $\rho(x,y)<\infty, \forall x,y\in X$. Nagel, Stein, and Wainger \cite{NSW85} showed $(X,\rho, \mu)$ is a doubling space: $\exists Q, Q'>0$ such that
\begin{equation}\label{doubling}
C^{Q'} \mu(B(x,\delta)) \lesssim \mu(B(x,C\delta)) \lesssim C^Q \mu(B(x,\delta)), \quad \forall C\geq 1, \delta>0, x\in X.
\end{equation}

For any $M\in \mathbb{N}$, and for $\alpha =(\alpha_1, \ldots, \alpha_M)\in \{1, \ldots, \nu\}^M$ a list of elements of $\{1,\ldots, \nu\}$, we set $W^\alpha = W_{\alpha_1}\cdots W_{\alpha_M}$, $|\alpha|=M$, and $\text{deg}\,\alpha = d_{\alpha_1} + \cdots + d_{\alpha_M}$. We call such $\alpha$ an ordered multi-index.

Let $\kappa \in \mathbb{N}_+$ be such that $h_i:= \kappa/d_i \in \mathbb{N}_+$ for every $i$. Consider a non-negative, symmetric, degree $2\kappa$ operator acting on $L^2(X,\mu)$ with dense domain $C^\infty(X)$ of the form
\begin{equation}\label{form}
L_0= \sum_{\text{deg}\,\alpha \leq \kappa, \text{deg}\,\beta \leq \kappa} b_{\alpha, \beta}(x) W^\alpha W^\beta, \quad b_{\alpha, \beta} \in C^\infty(X).
\end{equation}
Note in the special case that all formal degrees $d_i =1$, $L_0$ just needs to be non-negative and symmetric of the form
$$
\sum_{|\alpha|\leq 2\kappa} b_\alpha(x) W^\alpha, \quad b_\alpha \in C^\infty(X).
$$
Suppose $L_0$ is maximally\footnote{Maximal subellipticity, also called maximal hypoellipticity, is a far-reaching generalization of ellipticity, first introduced in the nontrivial case by Folland and Stein \cite{FOLLAND}. It covers a much larger class of PDE than elliptic PDE.
There is a different definition for the maximal subellipticity:
$$
\sum_i \|W_i^{2h_i} f\|_{L^2(X)}^2 \lesssim \|L_0f\|_{L^2(X)}^2 + \|f\|_{L^2(X)}^2, \quad \forall f\in C^\infty(X),
$$
which is equivalent to the above definition (\ref{maxsub}) in this setting by Chapter 8 of Street \cite{BookII}.} subelliptic:
\begin{equation}\label{maxsub}
\sum_{i=1}^\nu \|W_i^{h_i} f\|_{L^2(X)}^2 \lesssim \langle L_0f, f \rangle_{L^2(X)} + \|f\|_{L^2(X)}^2, \quad \forall f\in C^\infty (X).   
\end{equation}
Let $L$ be any non-negative, self-adjoint extension of $L_0$. The existence of such an extension is guaranteed by the Friedrichs extension.

For every bounded Borel measurable function $m: [0, \infty) \to \mathbb{C}$, the spectral multiplier $m(L)$ is defined by the spectral theorem. By Schwartz kernel theorem, we can identify each continuous linear operator $T: C^\infty(X) \to C^\infty(X)'$ with its Schwartz kernel in $C^\infty(X\times X)'$, denoted by $T(x,y)$. $T$ is continuous from $C^\infty(X)'$ to $C^\infty(X)$ if and only if $T(x,y)\in C^\infty(X\times X)$.

We will define singular integral operators of order $t\in \mathbb{R}$ in Definition \ref{nis}, which form a filtered algebra\footnote{Singular integral operators of order $t$ were first introduced by Nagel, Rosay, Stein, and Wainger \cite{NRSW89} under the name NIS operators of order $-t$. NIS operators were later studied by Chang, Nagel, and Stein \cite{CNS92}, Koenig \cite{koenig}. See Section 2.16 of Street \cite{BOOKI} for a history. For a proof that they form a filtered algebra, see Street \cite{BookII}.}: if $T_1, T_2$ are singular integral operators of orders $t_1, t_2$, respectively, then $T_1\circ T_2$ is a singular integral operator of order $t_1+t_2$. In particular, we say $T$ is a singular integral operator of order $0$ if 
\begin{itemize}
\item
(Differential inequalities): $\forall$ordered multi-indices $\alpha, \beta$,
$$
\big| W_x^\alpha W_y^\beta T(x,y)| \lesssim_{\alpha, \beta} \frac{\rho(x,y)^{- \text{deg}\,\alpha - \text{deg}\,\beta}}{\mu(B(x, \rho(x,y)))},
$$
where $W_x$ denotes the list of vector fields $W_1, \ldots, W_\nu$ acting as partial differential operators in the $x$ variables, and similarly for $W_y$.
\item
(Cancellation conditions): See Definition \ref{nis}.
\end{itemize}

Our first result is
\begin{theorem}\label{NIS}
Let $t\in \mathbb{R}$. $m(L)$ is a singular integral operator of order $2\kappa t$ if
$$
\sup_{\lambda>0} \lambda^{-t} \big| (\lambda \partial_\lambda)^k m(\lambda) \big|<\infty, \quad \forall k\in \mathbb{N}.
$$
\end{theorem}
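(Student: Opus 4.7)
The plan is to combine a dyadic Littlewood--Paley decomposition adapted to the natural $2\kappa$-scaling of $L$ with uniform kernel estimates for the rescaled pieces. Fix $\phi\in C_c^\infty((0,\infty))$ supported in $[1/4,4]$ with $\sum_{j\in\mathbb{Z}}\phi(2^{-2\kappa j}\lambda)=1$ on $(0,\infty)$, and set $m_j(\lambda):=m(\lambda)\phi(2^{-2\kappa j}\lambda)$. The contribution from the spectral projection at $0$ comes from a finite-dimensional space of smooth eigenfunctions and is smoothing of infinite order, so it can be absorbed. For the remaining sum $m(\lambda)=\sum_j m_j(\lambda)$, the hypothesis gives $|(\lambda\partial_\lambda)^k m_j(\lambda)|\lesssim_k 2^{2\kappa j t}$ uniformly in $j$, so writing $m_j(\lambda)=2^{2\kappa jt}M_j(2^{-2\kappa j}\lambda)$ produces $m(L)=\sum_j 2^{2\kappa jt}M_j(\delta_j^{2\kappa}L)$, where $\delta_j=2^{-j}$ and $\{M_j\}$ is a bounded family in $C_c^\infty([1/4,4])$.

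The heart of the argument is to show that for any bounded family $\{M\}\subset C_c^\infty((0,\infty))$, the operators $M(\delta^{2\kappa}L)$ are, uniformly in $\delta\in(0,1]$, ``elementary operators at Carnot--Carath\'eodory scale $\delta$'': their kernels satisfy
$$
\bigl|W_x^\alpha W_y^\beta [M(\delta^{2\kappa}L)](x,y)\bigr| \lesssim_{\alpha,\beta,N} \frac{\delta^{-\deg\alpha-\deg\beta}}{\mu(B(x,\delta))}\bigl(1+\delta^{-1}\rho(x,y)\bigr)^{-N}
$$
for every $N$, together with appropriate cancellation. I would prove these bounds via off-diagonal (Davies--Gaffney/Gaussian) estimates for $e^{-s\delta^{2\kappa}L}$ at scale $s\sim 1$, which are consequences of maximal subellipticity (\ref{maxsub}) and the subelliptic heat-kernel theory of Nagel--Stein--Wainger and Street, combined with the representation of $M(\delta^{2\kappa}L)$ as a rapidly convergent superposition of such heat operators (e.g.\ via the Mellin transform of $M$, or via $(I+\delta^{2\kappa}L)^{-N}$ expansions). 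The derivatives $W_x^\alpha,W_y^\beta$ produce the factors $\delta^{-\deg\alpha-\deg\beta}$ by subelliptic scaling: one uses the maximal subelliptic estimate to move the derivatives across an appropriate power $L^N$ and then absorbs $L^N$ into $M$ using that $M$ is supported away from zero.

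Summing over $j$ then yields the differential inequalities of order $2\kappa t$: the tails $(1+\rho(x,y)/\delta_j)^{-N}$ truncate the sum to those $j$ with $\delta_j\gtrsim \rho(x,y)$, and a geometric series in $\delta_j^{-2\kappa t-\deg\alpha-\deg\beta}$, together with the doubling property \eqref{doubling} used to replace $\mu(B(x,\delta_j))$ by $\mu(B(x,\rho(x,y)))$, produces the stated bound $\rho(x,y)^{-2\kappa t-\deg\alpha-\deg\beta}/\mu(B(x,\rho(x,y)))$.

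The main obstacle is the cancellation half of Definition \ref{nis}, which (as usual for NIS / Street-type operators) requires testing $m(L)$ against bump functions at every scale $\eta>0$ and controlling the resulting operator norm uniformly. I would address this piece-by-piece: since $M_j$ vanishes near zero, we may write $M_j(\delta_j^{2\kappa}L)=(\delta_j^{2\kappa}L)^N\widetilde M_j(\delta_j^{2\kappa}L)$ for any $N\in\mathbb{N}$, and the $N$ additional factors of $L$ acting on a bump function supported at scale $\eta$ give a gain of $(\eta/\delta_j)^{-2\kappa N}$ when $\eta<\delta_j$ and $(\delta_j/\eta)^{2\kappa N}$ when $\eta\geq\delta_j$ (the former from the elementary estimates above, the latter from writing the bump at scale $\eta$ as an average of bumps at scale $\delta_j$). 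Choosing $N$ large depending on $t$ makes the sum in $j$ absolutely convergent at the desired order $2\kappa t$, yielding the full cancellation condition. Organizing this uniform-in-$j$ bookkeeping, and matching it precisely to the cancellation clause of Definition \ref{nis}, is where I expect the main technical work to lie.
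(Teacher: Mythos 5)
Your proposal is correct in its essentials, and it identifies exactly the right ingredients: the $2\kappa$-dilated dyadic decomposition of $m$ into a bounded family $\{M_j\}\subset C_c^\infty$ supported away from $0$, separate treatment of the finite-rank smoothing projection onto $\mathrm{Null}(L)$, pre-elementary kernel bounds for $M_j(2^{-2\kappa j}L)$ via heat-kernel/Gaussian estimates that follow from maximal subellipticity, and cancellation via the factorization $M_j(\lambda)=\lambda^N\widetilde M_j(\lambda)$ together with $L=\sum b_\alpha W^\alpha$. These are precisely the tools the paper uses (Lemma~\ref{decompose-spectrum}, Lemmas~\ref{plancherel-estimate}--\ref{plancherel-decay}, Lemma~\ref{one-pre-elementary}, and the $\widetilde\eta_\gamma(\lambda)=\eta_\gamma(\lambda)/\lambda^2$ factorization inside Lemma~\ref{etaiselementary}). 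Where you diverge is in how the last step is packaged: you propose to verify the differential inequalities and cancellation clauses of Definition~\ref{nis} directly, which is the part you rightly flag as heavy bookkeeping. The paper instead verifies that $\{(M_j(2^{-2\kappa j}L),2^{-j})\}$ is a bounded set of elementary operators in the sense of Definition~\ref{elementary}, i.e.\ pre-elementary kernel bounds plus the recursive $b_\alpha W^\alpha$ factorization, and then invokes Proposition~\ref{sum} (from Street \cite{BookII}) stating that $\sum_j 2^{jt}E_j$ with $\{(E_j,2^{-j})\}$ elementary is automatically a singular integral operator of order $t$. That black box does the summation-over-$j$ and cancellation bookkeeping once and for all, turning the proof of Theorem~\ref{NIS} into a two-line corollary of Lemmas~\ref{etaiselementary} and~\ref{decompose-spectrum}. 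Your route is self-contained but re-proves, in the specific case at hand, the $\Leftarrow$ direction of Proposition~\ref{sum}; if you go that way, be careful that the cancellation condition in Definition~\ref{nis} must be checked for $T^*$ as well, so you need the two-sided factorization $M_j(\lambda)=\lambda^N\widetilde M_j(\lambda)\lambda^N$ (as the paper effectively uses with $N=1$), not just $M_j(\lambda)=\lambda^N\widetilde M_j(\lambda)$. One small stylistic note: the spectral gap $[0,\lambda_0)\cap\sigma(L)=\{0\}$ lets you take the partition of unity indexed by $j\in\mathbb{N}$ (as in~(\ref{phi})) rather than $j\in\mathbb{Z}$, which cleans up the $j\to-\infty$ end of your decomposition.
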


Street \cite{boxb} proves a similar result when $L$ is the Kohn Laplacian $\square_b$ on the boundary of a weakly pseudoconvex domain of finite type in $\mathbb{C}^2$, where $\square_b$ can have an infinite-dimensional null space. \cite{boxb} uses the finite propagation speed of the wave equation, which does not apply to our operator $L$ when the degree is greater than $2$. For other closely related results, see Nagel and Stein \cite{NS01B} and references therein.

Our second result is regarding the non-isotropic $L^p$ Sobolev spaces: $NL^p_t(X)$, where $1<p<\infty, t\in \mathbb{R}$. There are two standard ways to define the spaces, one in terms of $L$ (Definition \ref{simpleversion}), the other in terms of the Carnot-Carath\'eodory geometry (Definition \ref{triebel}). We will show they are equivalent (Theorem \ref{sobolev-equivalence}). 

\begin{definition}\label{simpleversion}
Let $\phi_0\in C_c^\infty(\mathbb{R})$ and let $\{\phi_j\}_{j>0}$ be bounded in $C_c^\infty(0,\infty)$, such that
$$
\sum_{j\in \mathbb{N}} \phi_j\big(2^{-2\kappa j} \cdot\big) \equiv 1, \quad \text{on } [0,\infty).
$$
(One can construct $\{\phi_j\}_{j\in \mathbb{N}}$ as in (\ref{phi}).)
For $t\in \mathbb{R}, 1<p<\infty$, we define the non-isotropic $L^p$ Sobolev space of order $t$ on $X$ adapted to $L$: $NL^p_t(X, L)$ to be the space of all distributions $f\in C^\infty(X)'$ with the following norm finite:
$$
\Big\| \Big\{ 2^{jt} \phi_j(2^{-2\kappa j} L) f \Big\}_{j\in \mathbb{N}} \Big\|_{L^p(X,l^2(\mathbb{N}))}.
$$
\end{definition}

Note $\phi_j(2^{-2\kappa j}L)f\in C^\infty(X)$ for $f\in C^\infty(X)'$ by Lemma \ref{etaiselementary}.

Basic properties and functional calculus results have been established on $NL^p_t(X,L)$ spaces or its variants, see e.g. Bui and Duong \cite{TRIEBEL} and Georgiadis, Kerkyacharian, Kyriazis and Petrushev \cite{GKKP17}. We will show $NL^p_t(X,L)$ does not depend on the choice of $L$, by proving that $NL^p_t(X,L)$ is equivalent to $NL_t^p(X,(W,d))$ defined in terms of the Carnot-Carath\'eodory geometry (see Definition \ref{triebel}).

In particular, in the case with all formal degrees $d_i=1$ and $t\in \mathbb{N}$, $NL^p_t(X,(W,d))$ consists of all $f\in L^p(X)$ with
\begin{equation}\label{example}
W^\alpha f \in L^p(X), \quad \forall \alpha \text{ with } |\alpha|\leq t.
\end{equation}

\begin{example}
Consider the case\footnote{While $\mathbb{R}^2$ is not compact, the same ideas hold in this case as well.} $X = \mathbb{R}^2$ with $(W, d) =\{(\partial_x,1), (x\partial_y,1)\}$. The vector fields satisfy H\"ormander's condition, since $[\partial_x, x\partial_y] = \partial_y$. For $t\in \mathbb{N}$,
$$
NL^p_{2t}(X,(W,d)) = \Big\{f\in L^p(X): \partial_x^{2t}f,  (x\partial_y)^{2t} f\in L^p(X) \Big\}= \Big\{f\in L^p(X): \partial_x^{2t}f, \partial_y^{t}f, (x\partial_y)^{2t} f\in L^p(X) \Big\},
$$
where the latter equality follows from Rothschild and Stein \cite{rothschild}. Thus in this case, we have
$$
L^{p}_{2t}(X) \subseteq NL^{p}_{2t}(X,(W,d)) \subseteq L^{p}_{t}(X), \quad \forall t\in \mathbb{N}.    
$$
For the general theory about relations between $NL^p_t(X,(W,d))$ and the standard $L^p$ Sobolev spaces, see Street \cite{BookII}.
\end{example}

Our second result is

\begin{theorem}\label{sobolev-equivalence}
For $t\in \mathbb{R}, 1<p<\infty$, the non-isotropic $L^p$ Sobolev space of order $t$ on $X$ adapted to $L$: $NL^p_t(X,L)$ is equal to the non-isotropic $L^p$ Sobolev space of order $t$ on $X$ adapted to the Carnot-Carath\'eodory geometry (See (\ref{example}) and Definition \ref{triebel}): $NL^p_t(X,(W,d))$.
\end{theorem}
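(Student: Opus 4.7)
The plan is to use Theorem~\ref{NIS} as the bridge between the spectral and geometric sides. Applied with $m(\lambda)=\phi(\lambda)$ for $\phi\in C_c^\infty((0,\infty))$ supported in a dyadic annulus, Theorem~\ref{NIS} shows that $\phi(2^{-2\kappa j}L)$ is a singular integral operator of order $0$ uniformly in $j$, with kernel adapted to Carnot-Carath\'eodory balls of radius $\sim 2^{-j}$. This identifies the spectral Littlewood-Paley projectors defining $NL^p_t(X,L)$ as operators of exactly the type used in the CC definition of $NL^p_t(X,(W,d))$.

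First I would verify the standard functional-calculus scaffolding: independence of the $NL^p_t(X,L)$ norm from the admissible partition $\{\phi_j\}$, a reproducing formula $\sum_j\phi_j(2^{-2\kappa j}L)=I$ on appropriate spaces, and the lifting isomorphism $(I+L)^{s/(2\kappa)}:NL^p_t(X,L)\to NL^p_{t-s}(X,L)$ for every $s\in\mathbb{R}$. Each reduces, via Theorem~\ref{NIS}, to the vector-valued Fefferman-Stein inequality for the uniformly $L^p$-bounded family $\{\phi(2^{-2\kappa j}L)\}_j$, together with a pairing of two admissible partitions through almost-orthogonality. The parallel scaffolding for $NL^p_t(X,(W,d))$ is built into Definition~\ref{triebel}.

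The heart of the argument is a cross-comparison of the two decompositions. Let $\{\psi_k\}$ denote the CC Littlewood-Paley family from Definition~\ref{triebel}, and consider the change-of-frame operators $T_{j,k}:=\phi_j(2^{-2\kappa j}L)\circ\psi_k$. I would prove the almost-orthogonality estimate $\|T_{j,k}\|_{L^p\to L^p}\lesssim 2^{-N|j-k|}$ for every $N\in\mathbb{N}$. For $k\geq j$, the vanishing of $\phi_j$ near zero allows the factorization $\phi_j(\lambda)=\lambda^N\tilde\phi_j(\lambda)$ with $\tilde\phi_j$ still satisfying the hypothesis of Theorem~\ref{NIS}; moving $L^N$ onto $\psi_k$ by self-adjointness and invoking maximal subellipticity \eqref{maxsub} to trade $L^N$ for controlled vector-field derivatives against the $W^\alpha W^\beta$-kernel bounds for $\psi_k$ at CC-scale $2^{-k}$ extracts the gain $2^{-2\kappa N(k-j)}$. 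The case $j\geq k$ is symmetric, using the cancellation built into $\psi_k$ against the derivative bounds for $\phi_j(2^{-2\kappa j}L)$ afforded by Theorem~\ref{NIS}.

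The equivalence of norms then follows from a vector-valued Schur test on $(T_{j,k})$ weighted by $2^{(j-k)t}$, applied to the $\ell^2$-valued Littlewood-Paley square functions on both sides. The main obstacle is the almost-orthogonality step: the two decompositions are defined in genuinely different ways (abstract functional calculus versus explicit geometric bumps), and closing the gap requires simultaneously exploiting cancellation on both sides, with maximal subellipticity \eqref{maxsub} acting as the technical hinge that converts powers of $L$ into controlled action on CC-adapted kernels. Once this is accomplished, the equivalence in Theorem~\ref{sobolev-equivalence} is immediate.
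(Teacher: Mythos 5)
Your plan relies on \emph{Theorem~\ref{NIS}} to give scale-adapted kernel bounds for $\phi(2^{-2\kappa j}L)$ uniformly in $j$, and this is where the argument breaks. Theorem~\ref{NIS} says that a single operator $m(L)$ is a singular integral operator of a fixed order; even if you apply it to $m(\lambda)=\phi(2^{-2\kappa j}\lambda)$ and track constants, what you obtain are order-$0$ SIO bounds of the form $|W_x^\alpha W_y^\beta K(x,y)|\lesssim\rho(x,y)^{-\deg\alpha-\deg\beta-M}/\mu(B(x,\rho(x,y)))$, which do \emph{not} encode the scale $2^{-j}$. What you actually need is the pre-elementary estimate \eqref{sym2}, namely $|W_x^\alpha W_y^\beta K(x,y)|\lesssim 2^{j\deg\alpha}2^{j\deg\beta}(1+2^j\rho(x,y))^{-k}/\mu(B(x,2^{-j}+\rho(x,y)))$ together with the iterated factorization in Definition~\ref{elementary}. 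That statement --- that $\{(\phi_j(2^{-2\kappa j}L),2^{-j})\}_{j\in\mathbb{N}}$ is a bounded set of \emph{elementary} operators --- is exactly Lemma~\ref{etaiselementary}, proved from the short-time Gaussian heat-kernel bounds (Lemmas~\ref{plancherel-estimate},~\ref{plancherel-decay}), and it is the ingredient used to prove Theorem~\ref{NIS}, not a consequence of it. So the bridge you describe is inverted, and the uniform-in-$j$, scale-adapted claim is not justified by the tools you invoke.

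The second gap is a misreading of Definition~\ref{triebel}. There is no fixed ``CC Littlewood--Paley family $\{\psi_k\}$'' there; the norm on $NL^p_t(X,(W,d))$ is a \emph{supremum over all} bounded sets of elementary operators $\mathcal{E}$. A cross-comparison between the spectral family and one chosen $\{\psi_k\}$ cannot control that supremum. The paper's actual argument handles this cleanly: Lemma~\ref{etaiselementary} puts the spectral family inside the class over which the sup is taken (giving one inclusion immediately), and for the other direction it writes $f=\sum_j D_j f$ with $D_j=\phi_j(2^{-2\kappa j}L)$ via the Calder\'on reproducing formula (Lemma~\ref{calderon}), then applies the almost-orthogonality Lemma~\ref{gain} to $\mathcal{E}\cup\mathcal{D}$ for an \emph{arbitrary} $\mathcal{E}$. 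Your Schur-test idea is in the spirit of Lemma~\ref{gain}, and the $\lambda^N\tilde\phi_j$ factorization you sketch parallels the factorization step in the proof of Lemma~\ref{etaiselementary}; but without stating Lemma~\ref{etaiselementary} (or an equivalent pre-elementary estimate with the iterated structure) and without quantifying over all $\mathcal{E}$ in Definition~\ref{triebel}, the proof does not close.
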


Since $NL_t^p(X,(W,d))$ is independent of the choice of $L$, we immediately have

\begin{corollary}
Let $t\in \mathbb{R}$ and $1<p<\infty$. Given $X$ and $(W,d)$, the space $NL^p_t(X, L)$ does not depend on the choice of the maximally subelliptic, non-negative, self-adjoint operator $L$.
\end{corollary}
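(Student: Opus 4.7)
The plan is essentially a one-line appeal to Theorem \ref{sobolev-equivalence}. Fix two non-negative, self-adjoint, maximally subelliptic operators $L_1$ and $L_2$ on $(X,\mu)$ with respect to the same Carnot-Carath\'eodory data $(W,d)$, each arising as a self-adjoint extension of some $L_0$ of the form (\ref{form}) satisfying (\ref{maxsub}). I would apply Theorem \ref{sobolev-equivalence} to each of $L_1$ and $L_2$ separately to obtain
$$
NL^p_t(X,L_1)\;=\;NL^p_t(X,(W,d))\;=\;NL^p_t(X,L_2),
$$
with equivalent norms. Since the middle space $NL^p_t(X,(W,d))$ is defined purely in terms of the vector fields $W_1,\ldots,W_\nu$ and their formal degrees $d_1,\ldots,d_\nu$ (cf.\ (\ref{example}) and Definition \ref{triebel}) and makes no reference to any operator $L$, its definition is the same in both applications of the theorem. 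Composing the two equivalences by transitivity yields $NL^p_t(X,L_1)=NL^p_t(X,L_2)$ with equivalent norms, which is the claim.

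There is no real obstacle here: the entire mathematical content of the corollary is absorbed into Theorem \ref{sobolev-equivalence}, so the only remaining task is the observation that $NL^p_t(X,(W,d))$ genuinely depends only on $(W,d)$. The existence of at least one admissible $L$ (needed to ensure the corollary is non-vacuous) is already recorded in the paper via the Friedrichs extension. Thus the proof is to invoke Theorem \ref{sobolev-equivalence} twice and conclude by transitivity; any difficulty is deferred to the proof of that theorem rather than to this corollary.
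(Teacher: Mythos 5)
Your proof is correct and is essentially the paper's own argument: the paper states the corollary immediately after Theorem \ref{sobolev-equivalence} with the remark that $NL^p_t(X,(W,d))$ is independent of $L$. You have simply made the ``two applications plus transitivity'' step explicit, which is the intended reading.
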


The equivalence also holds for the more general non-isotropic Besov and Triebel-Lizorkin spaces, see Theorem \ref{full-equivalence} and Corollary \ref{indep}. 

For a similar result, see Street \cite{BookII}, where such equivalence is established in the case when all the $d_i =1$ and when $L=W_1^*W_1 + \cdots + W_\nu^*W_\nu$, H\"ormander's sub-Laplacian.

Our third result is regarding the boundedness of $m(L)$ on the non-isotropic $L^p$ Sobolev spaces. Since $NL_t^p(X,L)=NL_t^p(X,(W,d))$, we will denote the non-isotropic $L^p$ Sobolev spaces by $NL_t^p(X)$, where $1<p<\infty, t\in \mathbb{R}$. The $m(L)$ satisfying the condition of Theorem \ref{NIS} is bounded on all $NL^p_t(X)$ spaces, due to Street \cite{BookII} (see Proposition \ref{nisbound} in this paper). But $m$ does not need to be infinitely smooth for $m(L)$ to be bounded on these spaces. We will quantify the amount of smoothness required for $m(L)$ to be bounded on each $NL^p_t(X)$ space. For that, we define the following notion.

\begin{definition}
Fix a nonzero $\phi\in C_c^\infty(0,\infty)$. For every $1<r\leq\infty, 0<s < \infty$, define a norm 
$$
\|m\|_{L^r_{s, \text{sloc}}} := \sup_{R>0} \big\|\phi(\cdot) m\big(R\cdot \big) \big\|_{L^r_s(\mathbb{R})},
$$
where $\|\cdot\|_{L^r_s(\mathbb{R})}$ denotes the standard $L^r$ Sobolev norm of order $s$. More precisely,
$$
\|f\|_{L^r_s(\mathbb{R})} = \big\| \mathcal{F}^{-1} \big( (1+4\pi^2 |\xi|^2)^{s/2} \hat f(\xi) \big) \big\|_{L^r(\mathbb{R})}, \quad \mathcal{F}^{-1} \text{ denotes the inverse Fourier transform}.
$$
\end{definition}

\begin{remark}
The conclusions in the paper do not depend on the choice of the above nonzero $\phi \in C_c^\infty(0,\infty)$. In fact, for $1<r<\infty$, by Christ \cite{CHRIST2}, whether the norm $\|\cdot \|_{L^r_{s, \text{sloc}}}$ is finite does not depend on the choice of nonzero $\phi \in C_c^\infty(0,\infty)$. For $r=\infty$, $\forall 0\neq \phi, \psi \in C_c^\infty(0,\infty), \forall \epsilon>0$,
$$
\sup_{R>0} \big\|\phi(\cdot) m\big(R\cdot \big) \big\|_{L^\infty_s(\mathbb{R})} \lesssim_{\phi, \psi,\epsilon} \sup_{R>0} \big\|\psi(\cdot) m\big(R\cdot \big) \big\|_{L^\infty_{s+\epsilon}(\mathbb{R})},
$$
and the proofs in the paper allow changing from $\|\cdot \|_{L^\infty_s}$ to $\|\cdot \|_{L^\infty_{s+\epsilon}}$ or $\|\cdot \|_{L^\infty_{s-\epsilon}}$ for sufficiently small $\epsilon>0$.

For consistency, we will fix $\phi$ as defined in (\ref{phi}).    
\end{remark}

For $t\in \mathbb{R}$, denote $\Lambda^t = \max\{\lambda^t, (1+\lambda)^t\}$, as a function of $\lambda$ on $(0,\infty)$. Recall $Q$ is defined in (\ref{doubling}). By combining the framework of Street \cite{BookII} and the result of Duong, Ouhabaz, and Sikora \cite{DUONG} on spectral multipliers, we will obtain our third result:

\begin{theorem}\label{unconditional}
Let $t, \tau\in \mathbb{R}, 1<p<\infty$. $m(L)$ is bounded from $NL^p_{2\kappa \tau}(X)$ to $NL^p_{2\kappa \tau-2\kappa t}(X)$, if 
$$
s >  Q\Big|\frac{1}{p}-\frac{1}{2}\Big|, \quad 0\leq \frac{1}{r} < \min \Big\{1, s- Q\Big|\frac{1}{p}-\frac{1}{2}\Big|\Big\},
$$
and
\begin{equation}\label{finiteness}
\Big\| \frac{m(\lambda)}{\Lambda^t} \Big\|_{L^r_{s, \text{sloc}}}<\infty.  
\end{equation}
Moreover for such $r,s$,
$$
\big\|m(L) \big\|_{NL^p_{2\kappa \tau}(X) \to NL^p_{2\kappa \tau-2\kappa t}(X)} \lesssim_{t, \tau, p, s,r} |m(0)| + \Big\| \frac{m(\lambda)}{\Lambda^t} \Big\|_{L^r_{s, \text{sloc}}}.
$$
In particular,
\begin{equation}\label{threshold2}
\big\|m(L) \big\|_{NL^p_{2\kappa \tau}(X) \to NL^p_{2\kappa \tau-2\kappa t}(X)} \lesssim_{t, \tau, p} |m(0)| + \Big\| \frac{m(\lambda)}{\Lambda^t} \Big\|_{L^2_{\frac{Q+1}{2}, \text{sloc}}},    
\end{equation}
$$
\big\|m(L) \big\|_{NL^p_{2\kappa \tau}(X) \to NL^p_{2\kappa \tau-2\kappa t}(X)} \lesssim_{t, \tau, p} |m(0)| + \Big\| \frac{m(\lambda)}{\Lambda^t} \Big\|_{L^\infty_{\frac{Q}{2}, \text{sloc}}}.
$$
\end{theorem}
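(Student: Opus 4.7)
The plan is to combine the singular-integral framework developed in Street \cite{BookII} with the finite-smoothness spectral multiplier theorem of Duong, Ouhabaz, and Sikora \cite{DUONG}, bridged by the algebraic identity that all bounded Borel functions of $L$ commute. The starting point is the factorization $m(\lambda)=\Lambda^t(\lambda)\,\tilde m(\lambda)$ with $\tilde m:=m/\Lambda^t$. Since $\Lambda^t$ verifies $\sup_{\lambda>0}\lambda^{-t}|(\lambda\partial_\lambda)^k\Lambda^t(\lambda)|<\infty$ for every $k\in\mathbb{N}$, Theorem \ref{NIS} makes $\Lambda(L)^t$ a singular integral operator of order $2\kappa t$, and Proposition \ref{nisbound} upgrades this to the bounded mapping $\Lambda(L)^t:NL^p_{2\kappa\sigma}\to NL^p_{2\kappa\sigma-2\kappa t}$ for every $\sigma\in\mathbb{R}$. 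The same argument applied with $\pm\tau$ in place of $t$ yields bounded maps $\Lambda(L)^\tau:NL^p_{2\kappa\tau}\to L^p$ and $\Lambda(L)^{-\tau}:L^p\to NL^p_{2\kappa\tau}$, which will be used to lift $L^p$-boundedness back to $NL^p_{2\kappa\tau}$.

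Next, commutativity of the functional calculus gives the identity $\tilde m(L)=\Lambda(L)^{-\tau}\,\tilde m(L)\,\Lambda(L)^{\tau}$, so the desired $NL^p_{2\kappa\tau}\to NL^p_{2\kappa\tau}$ bound for $\tilde m(L)$ reduces to an $L^p\to L^p$ bound. For the latter I would invoke the main theorem of \cite{DUONG}: once one checks its Plancherel/heat-kernel hypotheses for $L$ on the doubling metric measure space $(X,\rho,\mu)$ of homogeneous dimension $Q$, the conditions on $s$ and $r$ stated in Theorem \ref{unconditional} are exactly the DOS hypotheses, and they yield $\|\tilde m(L)\|_{L^p\to L^p}\lesssim\|m/\Lambda^t\|_{L^r_{s,\text{sloc}}}$. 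The required heat-kernel estimate for $e^{-sL}$ is obtained by applying Theorem \ref{NIS} to the rescaled symbol $e^{-s\lambda}$, which yields the on-diagonal bound of Nagel-Stein-Wainger type in terms of $\mu(B(x,s^{1/(2\kappa)}))^{-1}$. The $|m(0)|$ summand absorbs the behavior of $m$ at the origin: writing $m=m(0)\eta+(m-m(0)\eta)$ for a cutoff $\eta\in C_c^\infty(\mathbb{R})$ equal to $1$ near $0$, Theorem \ref{NIS} applied to $m(0)\eta$ contributes $|m(0)|$ with the correct operator bound, while $(m-m(0)\eta)/\Lambda^t$ has the same $L^r_{s,\text{sloc}}$ norm as $m/\Lambda^t$ up to $|m(0)|$ times a constant. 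Composing the $\tilde m(L)$ bound with the mapping property of $\Lambda(L)^t$ from the first step yields the full estimate.

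The two special cases in (\ref{threshold2}) follow by taking $r=2,\ s=(Q+1)/2$ and $r=\infty,\ s=Q/2$, both of which satisfy the abstract hypothesis for every $1<p<\infty$. I expect the main obstacle to lie in supplying the Plancherel/heat-kernel input demanded by \cite{DUONG}: for $2\kappa>2$ the wave equation $(\partial_t^2+L)u=0$ has no finite propagation speed, so the usual second-order route is unavailable, and one must instead deduce the spectral-density bound from Theorem \ref{NIS} combined with the doubling property (\ref{doubling}). Once this input is secured, the remaining argument is a short algebraic manipulation based only on commutativity and the mapping properties of the smooth multiplier $\Lambda(L)^t$.
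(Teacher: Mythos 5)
The proposal has a genuine gap at its central step. You assert that once the Plancherel/heat-kernel hypotheses of Duong--Ouhabaz--Sikora are verified, "the conditions on $s$ and $r$ stated in Theorem \ref{unconditional} are exactly the DOS hypotheses." This is not correct. The result imported from \cite{DUONG} is Proposition \ref{L^p-bound}: it gives $L^p$-boundedness for all $1<p<\infty$ under the single condition $m\in L^\infty_{Q/2+\epsilon,\text{sloc}}$ for some $\epsilon>0$; it does not supply the $p$-dependent threshold $s>Q|\frac{1}{p}-\frac{1}{2}|$, nor boundedness under an $L^r_s$-Sobolev condition for general $r$. In the paper, the passage from the $L^\infty_{Q/2+\epsilon}$ condition (Theorem \ref{Sobolev-bound}) to the sharp conditions of Theorem \ref{unconditional} is precisely the content of Proposition \ref{interpolation}, an interpolation theorem adapted from Calder\'on--Torchinsky and proved via Stein's analytic-family interpolation applied to the carefully constructed family $m_z$, taking $p_1=2$ (where the trivial $L^\infty$ bound of $m$ suffices) against $p_0$ near $1$ (where Theorem \ref{Sobolev-bound} applies). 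Without this interpolation argument your proposal cannot reach the conclusion as stated; you would only recover the $r=\infty$, $s>Q/2$ case.

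There is also a smaller technical flaw in the factorization. For $t>0$ one has $\Lambda^t(\lambda)=(1+\lambda)^t$, so $\lambda^{-t}\Lambda^t(\lambda)\to\infty$ as $\lambda\to 0$; thus $\Lambda^t$ does \emph{not} satisfy the Mihlin--H\"ormander hypothesis $\sup_{\lambda>0}\lambda^{-t}|(\lambda\partial_\lambda)^k\Lambda^t(\lambda)|<\infty$ of Theorem \ref{NIS}, and $\Lambda(L)^t$ is not a singular integral operator of order $2\kappa t$ by that theorem. The paper avoids this by first splitting off $m_0(L)=m(0)E_L(0)$ (treated by Lemma \ref{resolution}) and then conjugating the remainder by $(I+L)^t\sigma(L)$ where $\sigma\in C_c^\infty$ is supported away from the origin; the cutoff $\sigma$ is what makes the Mihlin--H\"ormander condition hold. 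You correctly identify the need to split off $m(0)$ via a cutoff, but the subsequent conjugation by $\Lambda(L)^{\pm t}$, $\Lambda(L)^{\pm\tau}$ without a cutoff away from $0$ does not go through as written.
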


\begin{remark}\label{1.8}
By the Sobolev embedding theorem, for $m$ satisfying (\ref{finiteness}), the restriction $m|_{(0,\infty)}$ is equal to a continuous function. $m(L)$ is defined using this continuous version of $m|_{(0,\infty)}$. That is, we are not allowed to change $m$ on a set of measure $0$.
\end{remark}

\begin{remark}
The range $s>Q\big|\frac{1}{p} - \frac{1}{2}\big|$ is sharp in the case when $L$ is the Laplace operator on $\mathbb{R}^Q$ with each $W_i = \partial_{x_i}, d_i =1$. See Grafakos, He, Honzik, and Nguyen \cite{grafakos} for a counterexample below the critical line $s=Q\big|\frac{1}{p} - \frac{1}{2}\big|$, and see Slav\'{\i}kov\'{a} \cite{lenka} for a counterexample on the critical line $s=Q\big|\frac{1}{p} - \frac{1}{2}\big|$.
\end{remark}

The same problem has been investigated in a myriad of other settings, especially when $L$ is some version of the Laplacian on a manifold, and there is a huge literature devoted to showing that $L$ has a differentiable functional calculus on $L^p(X)$ under suitable assumptions on $L$ and the underlying manifold (e.g., the doubling condition, Gaussian-type heat kernel bounds, finite propagation speed for the wave equation, etc). The framework of Street \cite{BookII} allows us to treat in a simple and uniform way the broad class of maximally subelliptic operators on a compact manifold.

Our third result continues in the line of the work by Christ \cite{CHRIST2} which showed the $L^p$ boundedness of $m(L)$ when $L$ is a sub-Laplacian on a graded nilpotent group (also proved independently by Mauceri and Meda \cite{MM90}) and the work by Duong, Ouhabaz, and Sikora \cite{DUONG} which showed the $L^p$ boundedness of $m(L)$ under the condition of a Gaussian bound instead of the maximal subellipticity.

As in (\ref{threshold2}), our $m$ needs to be in $L^2_{(Q+1)/2, \text{sloc}}$ for $m(L)$ to be bounded on $L^p(X)$. This is different from the condition required in the special case when $L$ is a sub-Laplacian on a graded nilpotent group (Christ \cite{CHRIST2}), where $m$ only needs to be in $L^2_{s, \text{sloc}}$ for $s>Q/2$. In fact, for the general $L$, Duong, Ouhabaz, and Sikora \cite{DUONG} gave counter-examples that $m\in L^2_s$ for $s>Q/2$ is not sufficient to make $m(L)$ bounded on $L^p(X)$.

In Section \ref{defn}, we will define the singular integral operators, and give two standard definitions for the non-isotropic Besov and Triebel-Lizorkin spaces. In Section \ref{keylemma}, we will prove a key lemma involving a set of spectral multipliers, which is used in proving Theorems \ref{NIS} and \ref{full-equivalence}. In Section \ref{sectionnis}, we will prove Theorem \ref{NIS}: $m(L)$ is a singular integral operator under the conditions of that theorem. In Section \ref{spaceequiv}, we will prove that the two standard ways to define non-isotropic Besov and Triebel-Lizorkin spaces are equivalent (Theorem \ref{full-equivalence}), from which Theorem \ref{sobolev-equivalence} follows. In Section \ref{sectionbound}, we will prove the boundedness of $m(L)$ on non-isotropic $L^p$ Sobolev spaces under a Mihlin-H\"ormander type condition (Theorem \ref{unconditional}), which uses an interpolation result in Appendix \ref{B}.

\textbf{Acknowledgements.} The author would like to thank her advisor Brian Street for introducing this problem, for his guidance and illuminating discussions. The author would also like to thank Loukas Grafakos who informed her the Calder\'on-Torchinsky multiplier theorem. The author was partially supported by the NSF grant DMS-2153069. The author would like to thank the anonymous referee for their comments.

\section{Definitions}\label{defn}
In order to define the singular integral operators, we need the following notion.

\begin{definition}[Street \cite{BookII}]
A set $\mathcal{B} \subseteq C^\infty(X) \times X \times (0,1]$ is a bounded set of bump functions if
\begin{itemize}
\item
$\forall (\eta, x, 2^{-j}) \in \mathcal{B}$, $\eta$ is supported in $B(x, 2^{-j})$,

\item
$\forall$ordered multi-index $\alpha$, $\forall (\eta, x, 2^{-j}) \in \mathcal{B}$,
$$
\sup_{y\in X} 2^{-j\,\text{deg}\,\alpha} |W^\alpha \eta(y)| \lesssim_\alpha \mu(B(x,2^{-j}))^{-1}.
$$
\end{itemize}
\end{definition}

Recall that $Q, Q'$ are defined in (\ref{doubling}).
\begin{definition}[\cite{BookII}]\label{nis}Let $t\in \mathbb{R}$.
A continuous operator $T: C^\infty(X) \to C^\infty(X)'$ is a singular integral operator of order $t$ if 
\begin{itemize}
\item 
(Differential inequalities): $T(x,y)$ is $C^\infty$ for $x\neq y$ and satisfies that for all ordered multi-indices $\alpha, \beta$, and for all $M\in \mathbb{N}$ with $t+\text{deg}\,\alpha + \text{deg}\,\beta + M >-Q'$,
\begin{equation}\label{sym}
\big| W_x^\alpha W_y^\beta T(x,y)| \lesssim_{\alpha, \beta, M} \frac{\rho(x,y)^{-(t+ \text{deg}\,\alpha + \text{deg}\,\beta + M)}}{\mu(B(x, \rho(x,y)))},
\end{equation}
\item
(Cancellation conditions): For all bounded sets of bump functions on $X$: $\mathcal{B} \subseteq C^\infty(X) \times X \times (0,1]$, for all ordered multi-index $\alpha$, and for all $M \in \mathbb{N}$ with $t+ \text{deg}\,\alpha + M > -Q'$,
$$
\sup_{\substack{(\eta, y, 2^{-j})\in \mathcal{B}\\ x\in X}} 2^{-j(t+\text{deg}\,\alpha + M)} \mu(B(x, 2^{-j})) |W^\alpha T\eta(x) | \lesssim_{\mathcal{B}, \alpha, M} 1,
$$
with the same estimates for $T^*$ in place of $T$.
\end{itemize}
\end{definition}

The doubling condition (\ref{doubling}) of the space $(X,\rho, \mu)$ immediately implies that $\forall x\in X, \forall r, R>0$,
\begin{equation}\label{doub}
\begin{aligned}
&\mu(B(x,R)) \lesssim \Big(1+\frac{R}{r}\Big)^Q \mu(B(x,r)), \\
&\mu(B(x,r))\lesssim \Big(1+ \frac{\rho(x,y)}{r}\Big)^Q \mu(B(y,r)).
\end{aligned}
\end{equation}
Thus the inequality (\ref{sym}) is essentially symmetric with respect to $x$ and $y$. So is the inequality (\ref{sym2}) below. Also note that by the compactness of $X$,
$$
\mu(B(x,r)) \lesssim 1, \quad \forall x\in X, \forall r>0.
$$

In the proof of Theorem \ref{NIS}, we will use an equivalent definition for the singular integral operators, which involves the notion of a bounded set of elementary operators:

\begin{definition}[\cite{BookII}]
Let $\mathcal{E} = \{(E, 2^{-j})\} \subseteq C^\infty(X\times X) \times \{2^{-j}:j\in \mathbb{N}\}$ be a set of continuous operators $E: C^\infty(X)' \to C^\infty(X)$ each paired with a number $2^{-j}$. We say $\mathcal{E}$ is a bounded set of pre-elementary operators if for all ordered multi-indices $\alpha, \beta$, and for all $k\in \mathbb{N}$, 
\begin{equation}\label{sym2}
\big| W_x^\alpha W_y^\beta E(x,y) \big| \lesssim_{\alpha, \beta, k} 2^{j\,\text{deg}\,\alpha} \,2^{j\,\text{deg}\,\beta}\, \frac{(1+2^j\rho(x,y))^{-k}}{\mu(B(x, 2^{-j} + \rho(x,y)))}, \quad \forall (E, 2^{-j}) \in \mathcal{E}.    
\end{equation}
\end{definition}

\begin{definition}[\cite{BookII}]\label{elementary}
We define the set of bounded sets of elementary operators $\mathcal{G}$ to be the largest set of subsets of $C^\infty(X\times X) \times \{2^{-j}: j\in \mathbb{N}\}$ such that for all $\mathcal{E} \in \mathcal{G}$,
\begin{itemize}
    \item 
    $\mathcal{E}$ is a bounded set of pre-elementary operators.
    
    \item\label{0-elementary}
    There exist finite sets depending on $\mathcal{E}$:
    \begin{align*}
    & \mathcal{F}_1 = \big\{\big(b_{1,1}(x) W^{\alpha_{1,1}}, \kappa_{1,1}\big), \big(b_{1,2}(x) W^{\alpha_{1,2}}, \kappa_{1,2}\big),  \ldots, \big(b_{1, L_1}(x)W^{\alpha_{1, L_1}}, \kappa_{1,L_1}\big)\big\},\\
    & \mathcal{F}_2 = \big\{\big(b_{2,1}(x) W^{\alpha_{2,1}}, \kappa_{2,1}\big), \big(b_{2,2}(x) W^{\alpha_{2,2}}, \kappa_{2,2}\big), \ldots, \big(b_{2, L_2}(x)W^{\alpha_{2, L_2}}, \kappa_{2,L_2}\big)\big\},
    \end{align*}
    where each $b_{a, l}\in C^\infty(X), \text{deg}\,\alpha_{a,l} \leq \kappa_{a,l} \in \mathbb{N}$, and such that $\forall (E, 2^{-j})\in \mathcal{E}$,
    $$
    E= \sum_{\substack{1\leq l_1 \leq L_1 \\ 1\leq l_2 \leq L_2}} 2^{-(\epsilon_{1,l_1} + \epsilon_{2, l_2} +\kappa_{1, l_1}+ \kappa_{2, l_2})j}\, \big(b_{1,l_1}W^{\alpha_{1,l_1}}\big) \, E_{l_1, l_2}\, \big(b_{2, l_2} W^{\alpha_{2,l_2}}\big),
    $$
    where
    $$
    \epsilon_{a,l} =
    \left\{
    \begin{aligned}
    &1, \quad \kappa_{a,l}=0,\\
    &0, \quad \text{otherwise},
    \end{aligned}
    \right.
    $$
    and $\{(E_{l_1, l_2}, 2^{-j}): (E, 2^{-j})\in \mathcal{E}, 1\leq l_1\leq L_1, 1\leq l_2 \leq L_2\}\in \mathcal{G}$.
    \end{itemize}
\end{definition}

See \cite{BookII} for existence and uniqueness of $\mathcal{G}$. If $\mathcal{G}_0$ satisfies the above two conditions, then one must have $\mathcal{G}_0 \subseteq \mathcal{G}$. Any subset of a bounded set of elementary operators is still a bounded set of elementary operators. Any finite union of bounded sets of elementary operators is still a bounded set of elementary operators.

\begin{proposition}[\cite{BookII}]\label{sum}
For $t\in \mathbb{R}$, a continuous operator $T: C^\infty(X) \to C^\infty(X)'$ is a singular integral operator of order $t$ if and only if there exists a bounded set of elementary operators $\{(E_j, 2^{-j}):j\in \mathbb{N}\}$ with
$$
T = \sum_{j\in \mathbb{N}} 2^{jt} E_j.
$$
\end{proposition}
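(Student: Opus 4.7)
The plan is to prove the equivalence in two directions. For the \emph{sufficiency} direction, I would start from $T = \sum_{j \in \mathbb{N}} 2^{jt} E_j$ with $\{(E_j, 2^{-j})\}$ a bounded set of elementary operators and verify the two bullet points of Definition \ref{nis}. For the \emph{necessity} direction, I would decompose a singular integral $T$ of order $t$ using a Littlewood-Paley-type partition of unity associated to $L$.

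For sufficiency, plug the pre-elementary bound (\ref{sym2}) into the series $\sum_j 2^{jt} W_x^\alpha W_y^\beta E_j(x,y)$ and split it at $2^j \sim \rho(x,y)^{-1}$. In the regime $2^{-j} \geq \rho(x,y)$ one has $\mu(B(x,2^{-j}+\rho(x,y))) \approx \mu(B(x,2^{-j}))$, which is compared to $\mu(B(x,\rho(x,y)))$ via (\ref{doub}); this produces a geometric sum in $j$ whose dominant term sits at $2^j \sim \rho(x,y)^{-1}$ and matches the right-hand side of (\ref{sym}), provided $t+\deg\alpha+\deg\beta+Q' > 0$, which is precisely the allowed range of $M$ in Definition \ref{nis} (larger $M$ only weakens the bound since $\rho \lesssim 1$ on the compact $X$). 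In the regime $2^{-j} < \rho(x,y)$, the factor $(1+2^j\rho(x,y))^{-k}$ with $k$ taken sufficiently large forces convergence, again with dominant contribution near $2^j \sim \rho(x,y)^{-1}$. The cancellation conditions are handled similarly: given a bump $(\eta, y_0, 2^{-j_0}) \in \mathcal{B}$, write $T\eta(x) = \sum_j 2^{jt}E_j\eta(x)$, use the recursive factorization from Definition \ref{elementary} to transfer $W^{\alpha_{2,l_2}}$ onto $\eta$, and then sum on $j$ after splitting at $j \sim j_0$; the $2^{-\epsilon_{a,l} j}$ gains from the factorization combined with the bump estimates provide convergence in both regimes.

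For necessity, I would take $\phi_j$ as in Definition \ref{simpleversion} and set $D_j = \phi_j(2^{-2\kappa j} L)$, so $\sum_{j \geq 0} D_j = I$ on $C^\infty(X)$. The key spectral-multiplier lemma promised in Section \ref{keylemma} should show $\{(D_j, 2^{-j})\}$ is itself a bounded set of elementary operators. Then decompose
$$
T = \sum_{j,k \geq 0} D_j T D_k = \sum_{\ell \in \mathbb{Z}} \sum_{j \geq \max(0,-\ell)} D_j T D_{j+\ell},
$$
and show via an almost-orthogonality argument that each $D_j T D_{j+\ell}$, after factoring out $2^{\min(j,j+\ell)\,t}$, belongs to a bounded set of pre-elementary operators of scale $2^{-\min(j,j+\ell)}$ with an extra $2^{-\delta|\ell|}$ off-diagonal gain. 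The differential inequalities for $T$ control the composition away from the diagonal, while the cancellation conditions on $T$ (applied to the bump-function-like kernels constituting $D_j$ and $D_k$) produce the $\ell$-decay. Reindexing and summing in $\ell$ then yields $T = \sum_{j \geq 0} 2^{jt}\tilde E_j$ with $\{(\tilde E_j, 2^{-j})\}$ a bounded set of elementary operators.

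The main obstacle is the necessity direction, specifically verifying that the $2^{-\delta|\ell|}$ off-diagonal decay is quantitatively strong enough to be absorbed into the elementary-operator structure at scale $\min(j,j+\ell)$, uniformly across all derivatives $W^\alpha, W^\beta$ and across all recursive levels of the self-referential Definition \ref{elementary}. One must produce finite families $\mathcal{F}_1, \mathcal{F}_2$ so that the recursive condition actually closes up, and this is a nontrivial bookkeeping of commutators between the $W^\alpha$ on the $x$-side and those on the $y$-side of $D_j T D_{j+\ell}$. This is exactly what Street's framework in \cite{BookII} is built to handle, and I would appeal to the corresponding statement there for the almost-orthogonality and cancellation machinery rather than redo it from scratch.
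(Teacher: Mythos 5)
This proposition is cited from Street \cite{BookII} and not proved in the paper, so there is no in-paper argument to compare against; I will comment on the proposal's own merits.

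Your sufficiency sketch is sound. Summing (\ref{sym2}) over $j$ with a split at $2^{j}\approx\rho(x,y)^{-1}$ and using the lower doubling bound from (\ref{doubling}) on the on-diagonal range ($2^{-j}\geq\rho(x,y)$, where $\mu(B(x,\rho(x,y)))\lesssim(2^{j}\rho(x,y))^{Q'}\mu(B(x,2^{-j}))$), together with a large power of $(1+2^{j}\rho(x,y))^{-1}$ on the off-diagonal range, gives (\ref{sym}) for $M=0$ whenever $t+\text{deg}\,\alpha+\text{deg}\,\beta+Q'>0$; the cases where only larger $M$ are permitted need slightly more care at the endpoint $t+\text{deg}\,\alpha+\text{deg}\,\beta+Q'\leq 0$ but go through since $\rho\lesssim 1$ on the compact $X$. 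The cancellation bounds do arise from iterating the factorization in Definition \ref{elementary}; your sketch is plausible but glosses over the comparison between the scale $2^{-j}$ of the factorization gain and the scale $2^{-j_0}$ of the bump.

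The necessity direction is where the proposal falls short of a proof. Proposition \ref{sum} is a statement purely about the Carnot-Carath\'eodory structure $(W,d)$: neither singular integral operators, nor bump functions, nor elementary operators mention $L$, and the proof in \cite{BookII} builds the Littlewood-Paley decomposition geometrically rather than spectrally. Using $D_{j}=\phi_{j}(2^{-2\kappa j}L)$ is legitimate in this paper's setting, since Lemma \ref{etaiselementary} (whose proof does not invoke Proposition \ref{sum}) shows the $D_{j}$ are elementary, so there is no logical circle; but it is an indirect route, and it makes a geometry-only statement appear to depend on the choice of a maximally subelliptic $L$. More importantly, the genuine mathematical content of the necessity direction is precisely the claim you defer to ``Street's framework'': that the rescaled pieces $2^{-\min(j,j+\ell)t}D_{j}TD_{j+\ell}$, after summing in $\ell$, constitute a bounded set of elementary operators, \emph{including} the self-referential factorization condition of Definition \ref{elementary}. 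Verifying that recursive condition uniformly in $\ell$ is the heart of the theorem; appealing to \cite{BookII} for it amounts to citing the result, which is what the paper itself does. So the proposal assembles the right ingredients and is honest about the gap, but does not close it.
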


We also use the notion of bounded sets of elementary operators to define non-isotropic Besov and Triebel-Lizorkin spaces adapted to the Carnot-Carath\'eodory geometry:
\begin{definition}[\cite{BookII}]\label{triebel}
For $t\in \mathbb{R}, p\in (1, \infty), q\in (1, \infty]$, we define the non-isotropic Triebel-Lizorkin space on $(X,\mu)$ adapted to the Carnot-Carath\'eodory geometry: $\mathcal{F}_{p,q}^t(X, (W,d))$ to be the space of all $f\in C^\infty(X)'$ such that for every bounded set of elementary operators $\mathcal{E}$,
$$
\sup_{\{(E_j, 2^{-j}):j\in \mathbb{N}\}\subseteq \mathcal{E}}\big\| \big\{ 2^{jt} E_j f\big\}_{j\in \mathbb{N}}\big\|_{L^p((X,\mu), l^q(\mathbb{N}))} <\infty,
$$
equipped with the family of semi-norms 
$$
\Big\{\sup_{\{(E_j, 2^{-j}):j\in \mathbb{N}\}\subseteq \mathcal{E}}\big\| \big\{ 2^{jt} E_j f\big\}_{j\in \mathbb{N}}\big\|_{L^p(l^q)} : \mathcal{E} \text{ bounded set of elementary operators}\Big\}.
$$
For $q=2$, denote $\mathcal{F}_{p,2}^t(X,(W,d))$ by $NL^p_t(X, (W,d))$, and we call it the non-isotropic $L^p$ Sobolev space adapted to the Carnot-Carath\'eodory geometry.
\end{definition}

\begin{definition}[\cite{BookII}]\label{2.7}
For $t\in \mathbb{R}, p,q\in [1, \infty]$, we define the non-isotropic Besov space on $(X,\mu)$ adapted to the Carnot-Carath\'eodory geometry: $\mathcal{B}_{p,q}^t(X, (W,d))$ to be the space of all $f\in C^\infty(X)'$ such that for every bounded set of elementary operators $\mathcal{E}$,
$$
\sup_{\{(E_j, 2^{-j}):j\in \mathbb{N}\}\subseteq \mathcal{E}}\big\| \big\{ 2^{jt} E_j f\big\}_{j\in \mathbb{N}}\big\|_{l^q(\mathbb{N}, L^p(X,\mu))} <\infty,
$$
equipped with the family of semi-norms 
$$
\Big\{\sup_{\{(E_j, 2^{-j}):j\in \mathbb{N}\}\subseteq \mathcal{E}}\big\| \big\{ 2^{jt} E_j f\big\}_{j\in \mathbb{N}}\big\|_{l^q(L^p)} : \mathcal{E} \text{ bounded set of elementary operators}\Big\}.
$$
\end{definition}

To give the other standard definition for the non-isotropic Besov and Triebel-Lizorkin spaces, we need the following lemma.
\begin{lemma}\label{compact2}
The spectrum of $L$ is contained in $[0,\infty)$ and is discrete.
\end{lemma}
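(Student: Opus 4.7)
The inclusion $\sigma(L)\subseteq [0,\infty)$ is immediate from the spectral theorem: $L$ is self-adjoint and non-negative, so $\langle Lf,f\rangle\geq 0$ on its domain forces $\sigma(L)\subseteq [0,\infty)$. The substance of the lemma is the discreteness of $\sigma(L)$, which I would deduce by showing that the resolvent $(I+L)^{-1}$ is a compact self-adjoint operator on $L^2(X,\mu)$.

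The starting point is the maximal subellipticity hypothesis (\ref{maxsub}). Since $L$ is a non-negative self-adjoint extension of $L_0$, the associated quadratic form satisfies $\langle L_0 f, f\rangle = \langle Lf,f\rangle = \|L^{1/2}f\|_{L^2}^2$ for $f\in C^\infty(X)\subseteq \mathcal{D}(L^{1/2})$. Thus (\ref{maxsub}) gives
\begin{equation*}
\sum_{i=1}^\nu \|W_i^{h_i}f\|_{L^2(X)}^2 \;\lesssim\; \langle (I+L)f,f\rangle, \qquad f\in C^\infty(X).
\end{equation*}
Because $W_1,\ldots,W_\nu$ satisfy H\"ormander's condition on the compact manifold $X$, the classical subelliptic estimate for H\"ormander vector fields (see Rothschild--Stein \cite{rothschild} or Chapter 8 of Street \cite{BookII}) yields an $\epsilon>0$ such that
\begin{equation*}
\|f\|_{H^{\epsilon}(X)} \;\lesssim\; \sum_{i=1}^\nu \|W_i^{h_i} f\|_{L^2(X)} + \|f\|_{L^2(X)},
\end{equation*}
where $H^{\epsilon}(X)$ denotes the standard isotropic $L^2$ Sobolev space on the manifold $X$. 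Combining the two estimates gives $\|f\|_{H^{\epsilon}(X)}^2 \lesssim \langle (I+L)f,f\rangle$ for $f\in C^\infty(X)$, and this extends by density to the form domain of $L$. Equivalently, $(I+L)^{-1/2}$ maps $L^2(X,\mu)$ boundedly into $H^{\epsilon}(X)$.

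Now $H^{\epsilon}(X)\hookrightarrow L^2(X,\mu)$ compactly by Rellich's theorem on the compact manifold $X$, so $(I+L)^{-1/2}$ and hence $(I+L)^{-1}$ are compact self-adjoint operators on $L^2(X,\mu)$. The spectral theorem for compact self-adjoint operators then gives that $(I+L)^{-1}$ has a countable set of eigenvalues in $(0,1]$, of finite multiplicity, accumulating only at $0$; translating back through the bijection $\mu\mapsto \mu^{-1}-1$ shows that $\sigma(L)$ consists of eigenvalues of finite multiplicity accumulating only at $+\infty$, which is precisely discreteness.

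The one non-trivial input is the H\"ormander-type subelliptic estimate passing from $L^2$-control of the $W_i^{h_i}f$ to $H^{\epsilon}$-regularity of $f$; this is the expected main obstacle, but it is a standard fact in the theory of H\"ormander vector fields and is already invoked elsewhere in the references cited in the paper, so the plan amounts to assembling these ingredients.
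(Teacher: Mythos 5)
Your proof is correct and follows essentially the same route as the paper: use maximal subellipticity plus the H\"ormander subelliptic $L^2\to H^\epsilon$ estimate to show the resolvent maps $L^2(X)$ boundedly into a fractional Sobolev space, invoke Rellich compactness on the compact manifold, and conclude discreteness of the spectrum. The only cosmetic difference is that you work with $(I+L)^{-1/2}$ via the quadratic form, whereas the paper applies the maximal subellipticity estimate directly to $(I+L)^{-1}f$; both routes rely on the same subelliptic estimate cited from Street's book.
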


\begin{proof}
Since functions $\frac{1}{1+\lambda}, \frac{\lambda}{1+\lambda}$ are bounded on $[0, \infty)$, the operators $(I+L)^{-1}, L \circ (I+L)^{-1}$ are bounded operators on $L^2(X)$. By the maximal subellipticity,
\begin{align*}
\sum_{i=1}^\nu \big\|W_i^{h_i} (I+L)^{-1} f\big\|_{L^2(X)}^2 \lesssim \big\langle L(I+L)^{-1} f, (I+L)^{-1} f \big\rangle + \big\|(I+L)^{-1} f\big\|_{L^2(X)}^2  \leq 2\|f\|_{L^2(X)}^2.
\end{align*}
By Chapter 8 of Street \cite{BookII}, $\exists \epsilon>0$,
$$
\big\|(I+L)^{-1} f\big\|_{L_\epsilon^2(X)}^2 \lesssim \|f\|_{L^2(X)}^2.
$$
Hence $(I+L)^{-1}$ is a compact operator on $L^2(X)$. And since $(I+L)^{-1}$ is self-adjoint, the spectrum of $(I+L)^{-1}$ is countable and can accumulate only at $0$. Thus $L$ has discrete spectrum.
\end{proof}

Let $\lambda_0$ be the smallest nonzero spectrum. Fix $\phi_0 \in C_c^\infty[-\lambda_0/2, \lambda_0/2]$ with $0\leq \phi_0 \leq 1$, such that $\phi_0 \equiv 1$ on $[0, \lambda_0/4]$. Let 
\begin{equation}\label{phi}
\phi= (\phi_0- \phi_0(2^{2\kappa}\cdot)) \chi_{[0,\infty)}, \quad \phi_j = \phi \text{ for } j>0.
\end{equation}
Then $\{\phi_j: j>0\}$ is bounded in $C_c^\infty[2^{-2\kappa -2}\lambda_0, \lambda_0/2]$, and 
\begin{equation}\label{partition}
\sum_{j\in \mathbb{N}} \phi_j\big(2^{-2\kappa j}\cdot\big) \equiv 1, \quad \text{ on } [0,\infty).
\end{equation}
By Lemma \ref{etaiselementary}, each $\phi_j(2^{-2\kappa j} L)$ is continuous from $C^\infty(X)' \to C^\infty(X)$.

\begin{definition}\label{other}
For $t\in \mathbb{R}, p\in (1, \infty), q\in (1, \infty]$, we define the non-isotropic Triebel-Lizorkin space on $(X,\mu)$ adapted to $L$: $\mathcal{F}_{p,q}^t(X, L)$ to be the space of all $f\in C^\infty(X)'$ with the following norm finite:
$$
\Big\| \Big\{ 2^{jt} \phi_j(2^{-2\kappa j} L) f \Big\}_{j\in \mathbb{N}} \Big\|_{L^p(X,l^q(\mathbb{N}))}.
$$
When $q=2$, $\mathcal{F}_{p,2}^t(X,L)$ coincides with the space $NL^p_t(X, L)$ in Definition \ref{simpleversion}, and we call it the non-isotropic $L^p$ Sobolev space adapted to $L$.
\end{definition}

\begin{definition}
For $t\in \mathbb{R}, p, q\in [1, \infty]$, we define the non-isotropic Besov space on $(X,\mu)$ adapted to $L$: $\mathcal{B}_{p,q}^t(X, L)$ to be the space of all $f\in C^\infty(X)'$ with the following norm finite:
$$
\Big\| \Big\{ 2^{jt} \phi_j(2^{-2\kappa j} L) f \Big\}_{j\in \mathbb{N}} \Big\|_{l^q(\mathbb{N}, L^p(X))}.
$$
\end{definition}

\begin{theorem}\label{full-equivalence}
For $t\in \mathbb{R}, p\in (1,\infty), q\in (1, \infty]$,
$$
\mathcal{F}_{p,q}^t(X, (W, d)) = \mathcal{F}_{p,q}^t(X, L),
$$
with the same topology, and for $t\in \mathbb{R}, p,q\in [1,\infty]$,
$$
\mathcal{B}_{p,q}^t(X, (W, d)) = \mathcal{B}_{p,q}^t(X, L),
$$
with the same topology.
\end{theorem}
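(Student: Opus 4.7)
The plan is to deduce Theorem \ref{full-equivalence} from a single structural fact: the sequence
$$\mathcal{E}^\ast := \bigl\{\bigl(\phi_j(2^{-2\kappa j}L),\, 2^{-j}\bigr)\bigr\}_{j\in\mathbb{N}}$$
is itself a bounded set of elementary operators in the sense of Definition \ref{elementary}. I expect this to be exactly the content of the key lemma (Lemma \ref{etaiselementary}) developed in Section \ref{keylemma}, and to be a uniform-in-$j$ consequence of Theorem \ref{NIS}: each $m_j(\lambda) := \phi_j(2^{-2\kappa j}\lambda)$ is obtained by dilation from one of the two fixed bumps $\phi_0$ or $\phi$, so the Mihlin--H\"ormander hypotheses of Theorem \ref{NIS} hold with constants independent of $j$, and a careful bookkeeping of scales (using that $L$ has order $2\kappa$) places the resulting elementary decomposition precisely at scale $2^{-j}$.

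Granted this, one inclusion is immediate. If $f \in \mathcal{F}_{p,q}^t(X,(W,d))$, then applying its defining family of semi-norms to the specific bounded set of elementary operators $\mathcal{E}^\ast$ yields exactly the norm defining $\mathcal{F}_{p,q}^t(X,L)$, so $f \in \mathcal{F}_{p,q}^t(X,L)$ with a controlling inequality of norms; the identical argument handles $\mathcal{B}_{p,q}^t(X,(W,d)) \hookrightarrow \mathcal{B}_{p,q}^t(X,L)$.

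For the reverse inclusion, I would fix an arbitrary bounded set of elementary operators $\mathcal{E}$ and an arbitrary selection $\{(E_j,2^{-j})\}_{j\in\mathbb{N}}\subseteq \mathcal{E}$, and use the partition of unity (\ref{partition}) (promoted to an identity on $C^\infty(X)'$ by the smoothing built into $\mathcal{E}^\ast$) to write
$$E_j f \;=\; \sum_{k\in\mathbb{N}} E_j\,\phi_k(2^{-2\kappa k}L)\,f.$$
The engine of the argument is a composition estimate for elementary operators at different scales, standard in the framework of Street \cite{BookII}: for every $N\in\mathbb{N}$,
$$E_j\,\phi_k(2^{-2\kappa k}L) \;=\; 2^{-N|j-k|}\, F_{j,k}^{(N)},$$
where $\{(F_{j,k}^{(N)},\, 2^{-\min(j,k)})\}_{j,k\in\mathbb{N}}$ is a bounded set of elementary operators with bounds depending only on $\mathcal{E}$, $\mathcal{E}^\ast$, and $N$. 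Choosing $N$ larger than $|t|$ and using that elementary operators are pointwise dominated by the Hardy--Littlewood maximal function on $(X,\rho,\mu)$, the Triebel--Lizorkin bound on $\{2^{jt}E_jf\}$ follows by combining Young's convolution inequality on $\ell^q(\mathbb{N})$ in the index $k$ with the Fefferman--Stein vector-valued maximal inequality on the space of homogeneous type $(X,\rho,\mu)$; the Besov bound follows from Minkowski and Young. Taking supremum over $\mathcal{E}$ and all selections gives the reverse inclusion with control of topology.

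The main obstacle is the composition lemma with the exponential gain $2^{-N|j-k|}$. Without genuine cancellation this decay is unavailable, and the cancellation must be supplied both by the defining cancellation conditions of Definition \ref{elementary} and by the fact that $\phi_k$ is supported away from $0$ for $k>0$, so that $\phi_k(2^{-2\kappa k}L)$ behaves as a true Littlewood--Paley projector rather than a mere smoother. Verifying this rigorously in the present setting, where $L$ has degree $2\kappa > 2$ and finite-propagation-speed methods are unavailable, is where the substantive work lies; the key lemma of Section \ref{keylemma} should be engineered precisely to supply what is needed, after which the equivalence of both the Triebel--Lizorkin and Besov scales follows by the standard Littlewood--Paley machinery sketched above.
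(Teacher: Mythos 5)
Your overall strategy matches the paper's: identify Lemma \ref{etaiselementary} as establishing that $\mathcal{D}:=\{(\phi_j(2^{-2\kappa j}L),2^{-j})\}_{j\in\mathbb{N}}$ is a bounded set of elementary operators, obtain the inclusion ``$(W,d)$ side $\subseteq$ $L$ side'' by specializing the supremum to $\mathcal{D}$, and then get the reverse inclusion by expanding $f$ in the Littlewood--Paley partition and exploiting an almost-orthogonality gain $2^{-N|j-k|}$ for compositions at different scales, which in the paper is quoted as Lemma \ref{gain}. (One inaccuracy in framing: Lemma \ref{etaiselementary} is \emph{not} a consequence of Theorem \ref{NIS}; in the paper it is proven first and Theorem \ref{NIS} is then deduced from it via Proposition \ref{sum}. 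This is only a matter of logical ordering and does not affect the structure of the equivalence proof.)

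There is, however, a genuine gap in the reverse inclusion. After writing $E_j f=\sum_k E_j\phi_k(2^{-2\kappa k}L)f$ you invoke the composition estimate $E_j\phi_k(2^{-2\kappa k}L)=2^{-N|j-k|}F_{j,k}^{(N)}$ and then appeal to the pointwise maximal bound for elementary operators. But once $\phi_k(2^{-2\kappa k}L)$ has been absorbed into $F_{j,k}^{(N)}$, the maximal bound only gives $|F_{j,k}^{(N)}f|\lesssim Mf$; you cannot recover $M(\phi_l(2^{-2\kappa l}L)f)$ on the right-hand side, and so the final bound cannot close to the $\mathcal{F}^t_{p,q}(X,L)$ norm of $f$. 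The missing ingredient is exactly the Calder\'on reproducing formula (Lemma \ref{calderon} in the paper): one must use the idempotency $\phi_k(2^{-2\kappa k}L)=\phi_k(2^{-2\kappa k}L)\sum_{|l-k|\leq 1}\phi_l(2^{-2\kappa l}L)$ to insert a \emph{second} copy of the resolution, so that the almost-orthogonality/decay is spent on the composition $E_j\,\phi_k(2^{-2\kappa k}L)$ while a surviving factor $\phi_l(2^{-2\kappa l}L)f$ ($|l-k|\leq 1$) remains outside to be measured in $L^p(l^q)$. Concretely, the paper applies Lemma \ref{gain} with $F_j=E_j$, $F_{j+k}'=D_{j+k}$ acting on the sequence $\{2^{t(j+k+l)}D_{j+k+l}f\}_j$, and it is the inserted $D_{j+k}$ from Lemma \ref{calderon} that makes the decay $2^{-N|k|}$ available while leaving the target norm on the right. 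Once that step is added, the rest of your sketch (summing a geometric series in $k$ with $N>|t|$, Fefferman--Stein for the Triebel--Lizorkin scale, Minkowski and Young for the Besov scale) is sound and is essentially a re-derivation of Lemma \ref{gain}, which the paper instead cites as a black box from \cite{BookII}.
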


Theorem \ref{sobolev-equivalence} follows from Theorem \ref{full-equivalence}.

By the above theorem, since $\mathcal{F}^t_{p,q}(X,(W,d))$ and $\mathcal{B}^t_{p,q}(X,(W,d))$ in Definitions \ref{triebel} and \ref{2.7} do not involve $L$, we have

\begin{corollary}\label{indep}
Given $X$ and $(W,d)$, the spaces $\mathcal{F}_{p,q}^t(X,L)$ and $\mathcal{B}_{p,q}^t(X,L)$ do not depend on the choice of the non-negative, self-adjoint operator $L$ which is maximally subelliptic with respect to $(W,d)$.
\end{corollary}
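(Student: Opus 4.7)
The plan is to isolate two ingredients and then combine them via a standard Littlewood--Paley / Fefferman--Stein argument. The first ingredient is Lemma \ref{etaiselementary} (proved in Section \ref{keylemma}), which asserts that $\{(\phi_j(2^{-2\kappa j} L), 2^{-j}) : j \in \mathbb{N}\}$ is itself a bounded set of elementary operators. The second is an almost-orthogonality statement: for every bounded set of elementary operators $\mathcal{E}$ and every $M \in \mathbb{N}$, the family
$$\bigl\{\bigl(2^{M|j-k|} E_j \phi_k(2^{-2\kappa k} L),\ 2^{-\min(j,k)}\bigr) : j, k \in \mathbb{N},\ (E_j, 2^{-j}) \in \mathcal{E}\bigr\}$$
is a bounded set of (pre-)elementary operators. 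This will follow from the composition calculus for bounded sets of elementary operators developed in Street \cite{BookII}, combined with the cancellation $\phi_k(0) = 0$ for $k \geq 1$ (which forces $\phi_k(2^{-2\kappa k}L)$ to annihilate constants and to supply the geometric gain); the $k = 0$ term is handled separately since $\phi_0(L)$ is smoothing of every order.

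Granted these ingredients, the inclusion $\mathcal{F}_{p,q}^t(X,(W,d)) \hookrightarrow \mathcal{F}_{p,q}^t(X,L)$ is immediate. Lemma \ref{etaiselementary} places $\{(\phi_j(2^{-2\kappa j} L), 2^{-j})\}$ inside some bounded set of elementary operators $\mathcal{E}_0$, and specializing Definition \ref{triebel} to the sequence $E_j = \phi_j(2^{-2\kappa j} L) \in \mathcal{E}_0$ shows that the $\mathcal{F}_{p,q}^t(X,L)$ norm is dominated by the corresponding $\mathcal{E}_0$-semi-norm of $\mathcal{F}_{p,q}^t(X,(W,d))$. The Besov case is identical.

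For the reverse direction, fix a bounded set of elementary operators $\mathcal{E}$ and any sequence $\{(E_j, 2^{-j})\} \subseteq \mathcal{E}$. Writing $S_k := \phi_k(2^{-2\kappa k} L)$ and invoking the partition of unity (\ref{partition}) one has $E_j f = \sum_{k \in \mathbb{N}} E_j S_k f$. The almost-orthogonality claim, together with the pre-elementary kernel bound (\ref{sym2}) at scale $2^{-\min(j,k)}$, yields a pointwise estimate of the form
$$|E_j S_k f(x)| \lesssim 2^{-M|j-k|}\, \mathcal{M}\bigl(S_k f\bigr)(x),$$
where $\mathcal{M}$ is the Hardy--Littlewood maximal operator associated with the doubling space $(X,\rho,\mu)$. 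Applying the Fefferman--Stein vector-valued maximal inequality on doubling metric spaces (valid for $1 < p < \infty$, $1 < q \leq \infty$) and summing the geometric tail in $|j-k|$ gives
$$\bigl\| \bigl\{ 2^{jt} E_j f \bigr\}_{j} \bigr\|_{L^p(l^q)} \lesssim \bigl\| \bigl\{ 2^{jt} S_j f \bigr\}_{j} \bigr\|_{L^p(l^q)},$$
with implicit constants depending only on $\mathcal{E}, t, p, q$, and, crucially, uniformly over $\{(E_j, 2^{-j})\} \subseteq \mathcal{E}$. For the Besov case one replaces Fefferman--Stein by Minkowski's inequality in $l^q(L^p)$.

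The hard part is the almost-orthogonality claim. For $k \geq 1$ the cancellation input is clean ($\phi_k$ vanishes at the origin), but extracting the full geometric gain $2^{-M|j-k|}$ for \emph{every} $M$ requires the composition calculus for bounded sets of elementary operators of Street \cite{BookII} (the same machinery that underlies Proposition \ref{sum}), coupled with the precise structure of $\phi_j(2^{-2\kappa j} L)$ furnished by the key lemma of Section \ref{keylemma}. Once that is in hand, the remainder is a standard doubling-space Littlewood--Paley calculation, and Theorem \ref{sobolev-equivalence} follows by specializing to $q = 2$.
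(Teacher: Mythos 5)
Your proposal is aimed at the wrong target: in the paper, Corollary \ref{indep} is a one-line consequence of Theorem \ref{full-equivalence}. Since Definitions \ref{triebel} and \ref{2.7} of $\mathcal{F}^t_{p,q}(X,(W,d))$ and $\mathcal{B}^t_{p,q}(X,(W,d))$ never mention $L$, and Theorem \ref{full-equivalence} identifies these with $\mathcal{F}^t_{p,q}(X,L)$ and $\mathcal{B}^t_{p,q}(X,L)$ (with the same topology), independence of $L$ is immediate. What you have actually sketched is a proof of Theorem \ref{full-equivalence} itself, so I will read it as such and compare it to the paper's Section \ref{spaceequiv}.

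The easy direction is handled exactly as the paper does, via Lemma \ref{etaiselementary}. For the hard direction, your pointwise-maximal-function route is a legitimate alternative to the paper's; where the paper appeals to the black-box Lemma \ref{gain} (the $2^{-N|k|}$ operator gain from Street's composition calculus) and then just sums a geometric series, you propose to descend to pointwise kernel estimates and use the Fefferman--Stein vector-valued maximal inequality on the doubling space $(X,\rho,\mu)$. Both ultimately rest on the same machinery from \cite{BookII}, and for the Besov case both reduce to Minkowski, so the difference is one of packaging.

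There is, however, a genuine gap in the way you deduce the pointwise estimate. You assert that the almost-orthogonality claim (that $\{(2^{M|j-k|}E_j\phi_k(2^{-2\kappa k}L),\,2^{-\min(j,k)})\}$ is uniformly pre-elementary) plus the kernel bound (\ref{sym2}) gives $|E_j S_k f(x)|\lesssim 2^{-M|j-k|}\,\mathcal{M}(S_k f)(x)$. But a pre-elementary bound on the composite kernel of $E_j S_k$ only yields $|E_j S_k f(x)|\lesssim 2^{-M|j-k|}\,\mathcal{M} f(x)$, with the \emph{input} $f$ on the right; there is no way to make $S_k f$ appear inside the maximal function from a kernel bound on $E_j S_k$ alone. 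To get $\mathcal{M}(S_k f)$ you must first insert the Calder\'on reproducing formula $\phi_k(2^{-2\kappa k}L)=\phi_k(2^{-2\kappa k}L)\sum_{|l-k|\leq 1}\phi_l(2^{-2\kappa l}L)$ (Lemma \ref{calderon} in the paper) so as to write $E_j S_k f = (E_j S_k)(\tilde S_k f)$, or equivalently $E_j f = \sum_k (E_j \tilde S_k)(S_k f)$, and then apply the almost-orthogonality gain of $E_j S_k$ (resp.\ $E_j\tilde S_k$) to the already-localized piece $\tilde S_k f$ (resp.\ $S_k f$). This is exactly the role Lemma \ref{calderon} plays in the paper's proof, where it converts $E_j D_{j+k}f$ into $E_j D_{j+k}D_{j+k+l}f$ before Lemma \ref{gain} is applied. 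Without that insertion your decomposition $E_j f=\sum_k E_j S_k f$ produces only $|E_j f(x)|\lesssim \mathcal{M}f(x)$, which carries no Littlewood--Paley information. Once you add the reproducing formula, and verify the almost-orthogonality claim via the composition calculus (including the $k=0$ term, which is the smoothing projection $E_L(0)$ and must be checked separately), your argument closes.
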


Theorem \ref{unconditional} will follow from the following theorem via an interpolation result adapted from Calder\'on-Torchinsky multiplier theorem (see Chapter 5 of Grafakos \cite{grafakosbook}).
\begin{theorem}\label{Sobolev-bound}
Let $t, \tau\in \mathbb{R}, 1<p<\infty$. Denote $\Lambda^t = \max\{\lambda^t, (1+\lambda)^t\}$, as a function of $\lambda$ on $(0,\infty)$. $m(L)$ is bounded from $NL^p_{2\kappa \tau}(X)$ to $NL^p_{2\kappa \tau-2\kappa t}(X)$, if 
$$
\Big\| \frac{m(\lambda)}{\Lambda^t} \Big\|_{L^\infty_{\frac{Q}{2}+\epsilon, \text{sloc}}}<\infty, \quad \text{for some } \epsilon>0.
$$
Moreover, 
$$
\big\|m(L) \big\|_{NL^p_{2\kappa \tau}(X) \to NL^p_{2\kappa \tau-2\kappa t}(X)} \lesssim_{t, \tau, p, \epsilon} |m(0)| + \Big\| \frac{m(\lambda)}{\Lambda^t} \Big\|_{L^\infty_{\frac{Q}{2}+\epsilon, \text{sloc}}}.
$$
\end{theorem}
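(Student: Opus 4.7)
The plan is to reduce the Sobolev-to-Sobolev bound to a uniform $L^p$ bound for compactly supported dyadic pieces of the multiplier, using the Littlewood-Paley characterization of $NL^p_{2\kappa\tau}(X)$ in Definition \ref{simpleversion}. Factor $m(\lambda) = \Lambda^t(\lambda)\,\tilde m(\lambda)$ on $(0,\infty)$, so that $\tilde m \in L^\infty_{Q/2+\epsilon,\text{sloc}}$, and decompose
$$
m(L) = m(0)\,P_0 + \sum_{j\ge 1}\phi(2^{-2\kappa j}L)\,m(L),
$$
where $P_0 = \phi_0(L)$ is the finite-rank projection onto $\ker L$ (Lemma \ref{compact2}); the first summand is a smoothing operator bounded by $|m(0)|$ on every $NL^p_s(X)$. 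For each $j\ge 1$, set $g_j(\mu) := \phi(\mu)\,m(2^{2\kappa j}\mu)$, so $\phi(2^{-2\kappa j}L)\,m(L) = g_j(2^{-2\kappa j}L)$, and further decompose $g_j = 2^{2\kappa j t}\,\sigma_j\,h_j$ with $h_j(\mu) := \phi(\mu)\,\tilde m(2^{2\kappa j}\mu)$ and $\sigma_j(\mu) := 2^{-2\kappa j t}\,\Lambda^t(2^{2\kappa j}\mu)$. Because $\phi$ is supported in a fixed compact subset of $(0,\infty)$ and $\Lambda^t(\lambda) \sim \lambda^t$ asymptotically, a direct computation of derivatives shows that $\{\sigma_j\}_{j\ge 1}$ is bounded in $C_c^\infty((0,\infty))$; combined with the sloc hypothesis this yields $\sup_j \|h_j\sigma_j\|_{L^\infty_{Q/2+\epsilon}(\mathbb{R})} \lesssim \|\tilde m\|_{L^\infty_{Q/2+\epsilon,\text{sloc}}}$. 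The powers $2^{2\kappa j t}$ are exactly what is needed to absorb the shift of Sobolev exponent from $2\kappa\tau$ to $2\kappa(\tau-t)$.

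The analytic heart is a key lemma adapted from Duong-Ouhabaz-Sikora \cite{DUONG}: for every $F\in L^\infty_{Q/2+\epsilon}(\mathbb{R})$ supported in a fixed compact subset of $(0,\infty)$,
$$
\sup_{j\ge 0}\big\|F(2^{-2\kappa j}L)\big\|_{L^p(X)\to L^p(X)} \;\lesssim_{p,\epsilon}\; \|F\|_{L^\infty_{Q/2+\epsilon}(\mathbb{R})}.
$$
The two inputs are (a) Gaussian/exponential off-diagonal bounds for the dyadic spectral cutoffs $\phi(2^{-2\kappa j}L)(x,y)$, available from the elementary operator calculus of \cite{BookII} under maximal subellipticity, and (b) a weak-$(1,1)$/$L^2$ interpolation argument in the style of \cite{DUONG}, executed uniformly in the scale $j$. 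I expect this lemma to be the main obstacle: the classical \cite{DUONG} argument takes as input an $L^2$ Sobolev hypothesis (typically $L^2_{(Q+1)/2}$), and upgrading to the $L^\infty_{Q/2+\epsilon}$ hypothesis requires Fourier-side arguments exploiting the compact support of $F$ to express its convolution kernel as an $L^2$-integrable function against a decaying tail, which is what the extra $\epsilon$ of smoothness buys.

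To assemble the Sobolev estimate, evaluate $\|m(L)f\|_{NL^p_{2\kappa(\tau-t)}}$ via Definition \ref{simpleversion}, substitute $\phi_j(2^{-2\kappa j}L)\,m(L) = 2^{2\kappa j t}(h_j\sigma_j)(2^{-2\kappa j}L)$ so the $2^{\pm 2\kappa j t}$ factors cancel against the Sobolev weight, and exploit the spectral localization $\operatorname{supp}(h_j\sigma_j)\subset\operatorname{supp}\phi$ to write
$$
(h_j\sigma_j)(2^{-2\kappa j}L) = \sum_{|k-j|\le C}(h_j\sigma_j)(2^{-2\kappa j}L)\,\phi_k(2^{-2\kappa k}L).
$$
Combining the key lemma (applied to $F = h_j\sigma_j$) with a vector-valued Fefferman-Stein type extension — available because the DOS-type bound of the key lemma also produces Calder\'on-Zygmund kernel estimates uniformly in $j$ — reduces the target norm to $\big\|\{2^{k\cdot 2\kappa\tau}\phi_k(2^{-2\kappa k}L)f\}_k\big\|_{L^p(l^2)} \asymp \|f\|_{NL^p_{2\kappa\tau}(X)}$, completing the proof.
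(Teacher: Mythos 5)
Your proposal takes a genuinely different route from the paper's. The paper's proof is shorter and avoids vector-valued theory entirely: after splitting off $m_0(L)=m(0)E_L(0)$ as you do, it writes
\[
m(L)-m_0(L) \;=\; \big[(I+L)^{t-\tau}\sigma(L)\big]\cdot\big[(I+L)^{-t}(m(L)-m_0(L))\big]\cdot\big[(I+L)^{\tau}\sigma(L)\big],
\]
where $\sigma\in C_c^\infty[\lambda_0/8,\infty)$ equals $1$ on the support of $m-m_0$. The two outer factors satisfy the Mihlin condition of Theorem \ref{NIS}, hence are singular integral operators of orders $2\kappa(t-\tau)$ and $2\kappa\tau$, and are therefore bounded between the relevant $NL^p$ spaces by Proposition \ref{nisbound}; the middle factor is estimated directly on $L^p(X)$ by the Duong--Ouhabaz--Sikora result (Proposition \ref{L^p-bound}), applied to $(1+\lambda)^{-t}(m(\lambda)-m_0(\lambda))$. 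This trades the Littlewood--Paley assembly you propose for one application of the filtered algebra of singular integral operators.

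Your route has a real gap at the assembly step. Your dyadic decomposition, the factorization $g_j=2^{2\kappa jt}\sigma_j h_j$, and the ``key lemma'' (which is indeed a rescaled form of Proposition \ref{L^p-bound}, since for $F$ with fixed compact support in $(0,\infty)$ one has $\|F(2^{-2\kappa j}\,\cdot\,)\|_{L^\infty_{s,\mathrm{sloc}}}\lesssim\|F\|_{L^\infty_s(\mathbb{R})}$ uniformly in $j$) are all fine. But the final step requires a vector-valued square-function bound of the form
\[
\Big\|\big\{(h_j\sigma_j)(2^{-2\kappa j}L)\,g_j\big\}_j\Big\|_{L^p(X,l^2)}\lesssim \big\|\{g_j\}_j\big\|_{L^p(X,l^2)},
\]
uniformly over bounded families $\{h_j\sigma_j\}\subset L^\infty_{Q/2+\epsilon}$ with fixed compact support. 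The paper's Lemma \ref{gain} gives exactly such a bound for bounded sets of \emph{elementary} operators, but Lemma \ref{etaiselementary} produces elementary operators only from multipliers bounded in $C_c^\infty$, not from $L^\infty_{Q/2+\epsilon}$-bounded families; and Proposition \ref{L^p-bound} in the Duong--Ouhabaz--Sikora framework is a scalar $L^p$ (and weak-$(1,1)$) theorem, which does not automatically upgrade to the vector-valued $L^p(l^2)$ bound. Making this rigorous would require establishing uniform Hörmander-type operator-valued kernel estimates for the diagonal family $\{(h_j\sigma_j)(2^{-2\kappa j}L)\}_j$ from the $L^2$ kernel bounds of Lemma \ref{plancherel-decay}; you assert this as ``available'' but do not prove or cite it, and it is the nontrivial analytic content that the paper's factoring argument is designed to avoid.
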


\section{A bounded set of elementary operators}\label{keylemma}
In this section, we study a bounded set of elementary operators as in the following lemma, which plays a central role in proving the equivalence between spaces (Theorem \ref{full-equivalence}) and in proving that $m(L)$ is a singular integral operator when $m$ satisfies an appropriate Mihlin-H\"ormander type condition (Theorem \ref{NIS}).

\begin{lemma}\label{etaiselementary}
Suppose $\eta_0$ is a bounded Borel measurable function supported on $[-\lambda_0/2, \lambda_0/2]$, and $\{\eta_j\}_{j>0}$ is bounded in $C_c^\infty[2^{-2\kappa -2}\lambda_0, \lambda_0/2]$. Then $\big\{\big(\eta_j(2^{-2\kappa j} L), 2^{-j}\big) \big\}_{j\in \mathbb{N}}$ is a bounded set of elementary operators. In particular, each $\eta_j(2^{-2\kappa j} L): C^\infty(X)' \to C^\infty(X)$ is continuous.
\end{lemma}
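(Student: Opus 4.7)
The plan is to define a candidate set $\mathcal{G}_0$ consisting of all families $\{(\eta_j(2^{-2\kappa j}L), 2^{-j}): j\in \mathbb{N}\}$ satisfying the hypotheses of the lemma, and to verify that $\mathcal{G}_0$ meets both defining conditions of Definition \ref{elementary}. By the maximality clause of that definition, this yields $\mathcal{G}_0\subseteq\mathcal{G}$, which is exactly what is to be proven. The ``in particular'' statement then follows for free, since every bounded set of elementary operators consists of operators mapping $C^\infty(X)'$ to $C^\infty(X)$ by construction.

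\textbf{Pre-elementary estimates.} For $j=0$, since $\eta_0$ is supported in $[-\lambda_0/2,\lambda_0/2]$ and $\lambda_0$ is the smallest nonzero point of $\sigma(L)$, the spectral theorem gives $\eta_0(L)=\eta_0(0)P_{\ker L}$. Iterating maximal subellipticity shows $\ker L\subseteq C^\infty(X)$, so the finite-rank projection $P_{\ker L}$ has a smooth, bounded Schwartz kernel; on the compact manifold $X$ this trivially satisfies (\ref{sym2}) at $j=0$. For $j>0$, since $\eta_j$ is uniformly bounded in $C_c^\infty$ away from $0$, I would take an almost-analytic extension and apply the Davies-Helffer-Sj\"ostrand formula, expressing $\eta_j(2^{-2\kappa j}L)$ as a contour integral of resolvents $(\zeta-2^{-2\kappa j}L)^{-N}$. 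Scale-invariant off-diagonal resolvent estimates for the rescaled maximally subelliptic operator $2^{-2\kappa j}L$ relative to the rescaled vector fields $\{2^{-jd_i}W_i\}$ and the balls $B(x,2^{-j})$, obtained from maximal subellipticity and the associated heat kernel bounds, then produce (\ref{sym2}) with the correct $2^{-j}$-scaling and decay $(1+2^j\rho(x,y))^{-k}$.

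\textbf{Decomposition property.} Fix $N_1,N_2\in \mathbb{N}$, and for $j>0$ set $\tilde\eta_j(\lambda):=\eta_j(\lambda)\lambda^{-(N_1+N_2)}$; this is again uniformly bounded in $C_c^\infty[2^{-2\kappa-2}\lambda_0,\lambda_0/2]$ since the support avoids $0$. Self-adjoint functional calculus gives
$$\eta_j(2^{-2\kappa j}L)=2^{-2\kappa j(N_1+N_2)}L^{N_1}\,\tilde\eta_j(2^{-2\kappa j}L)\,L^{N_2}.$$
Iterating $L_0=\sum b_{\alpha,\beta}W^\alpha W^\beta$ and commuting coefficients past vector fields via $[W_i,b]=(W_ib)$, I would expand $L^{N_a}=\sum_{l_a}b_{a,l_a}(x)W^{\alpha_{a,l_a}}$ with $\text{deg}\,\alpha_{a,l_a}\leq 2\kappa N_a$. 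Substituting yields exactly the form required by Definition \ref{elementary}, with $\kappa_{a,l_a}=2\kappa N_a$ and inner operator $\tilde\eta_j(2^{-2\kappa j}L)$. For $j=0$, augment $\mathcal{F}_1,\mathcal{F}_2$ with the trivial triple $(1,\emptyset,0)$ and take the inner operator there to be $\eta_0(L)$ (with zero inner operators for the other index pairs at $j=0$, and zero for the trivial pair at $j>0$). The resulting inner-operator family is itself of the form that defines $\mathcal{G}_0$, closing the recursion.

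The main obstacle is the pre-elementary estimates for $j>0$: the decomposition step is essentially algebraic, but the kernel bounds carry all the analytic content, requiring scale-invariant off-diagonal resolvent decay in the CC-geometry with no finite propagation speed available when $2\kappa>2$. Establishing these rests on the Davies-Helffer-Sj\"ostrand route combined with the full strength of maximal subellipticity, following the framework of Street \cite{BookII}; the scaling argument treats $2^{-2\kappa j}L$ as maximally subelliptic of degree $2\kappa$ with respect to the rescaled vector fields on the balls $B(x,2^{-j})$, so that one uniform estimate serves all $j$.
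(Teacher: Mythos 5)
Your high-level architecture matches the paper's: introduce a candidate collection $\mathcal{G}_0$, verify the two conditions of Definition~\ref{elementary}, invoke maximality. The decomposition step is also essentially the paper's (the paper takes $N_1=N_2=1$, dividing by $\lambda^2$). But there is a technical flaw in how you set up $\mathcal{G}_0$: you restrict to families indexed by $j\in\mathbb{N}$ with exactly one operator per scale, mirroring the lemma's hypothesis. After decomposing, the inner family necessarily contains two pairs at each $j$ --- the nonzero pair $(\tilde\eta_j(2^{-2\kappa j}L),2^{-j})$ attached to the nontrivial index pairs in $\mathcal{F}_1\times\mathcal{F}_2$, and a zero pair $(0,2^{-j})$ attached to the trivial identity pair you augmented for $j=0$ (with roles reversed at $j=0$). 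That inner family is no longer ``of the form'' you allowed in $\mathcal{G}_0$, so the recursion does not close. The paper avoids this by defining $\mathcal{G}_0$ with arbitrary index sets $\Gamma$ (allowing repeated scales and zero multipliers), which is genuinely closed under the inner-family formation; you should do the same.

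The more serious gap is in the pre-elementary estimates for $j>0$, which carry all of the analytic content and which you only sketch. You propose Davies--Helffer--Sj\"ostrand plus ``scale-invariant off-diagonal resolvent estimates'' for the rescaled operator, but these estimates are neither stated precisely nor derived; they are not an obvious corollary of the heat kernel bound, particularly since finite propagation speed is unavailable when $2\kappa>2$ (as the paper itself emphasizes). The paper's actual argument is quite different and more concrete: it cites the short-time Gaussian heat kernel bound (Street~\cite{BookII}) and the weighted $L^2$ kernel estimates of Duong--Ouhabaz--Sikora (Lemmas~\ref{plancherel-estimate} and \ref{plancherel-decay}), and then, in the off-diagonal regime, factors $\eta_\gamma(\lambda)=\psi_\gamma(\lambda)e^{-\lambda}$ and bounds the composed kernel $\int \psi_\gamma(2^{-2\kappa j}L)(x,z)\,e^{-2^{-2\kappa j}L}(z,y)\,dz$ by splitting the $z$-integral over $\rho(y,z)\gtrsim\rho(x,y)$ and $\rho(x,z)\gtrsim\rho(x,y)$ and applying Cauchy--Schwarz with the weighted Plancherel estimates. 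This yields the $(1+2^j\rho(x,y))^{-k}$ decay directly. As written, your proposal defers the central estimate entirely to an unproved resolvent bound, so it does not yet constitute a proof of the key inequality (\ref{sym2}).
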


To prove Lemma \ref{etaiselementary}, we need estimates for kernels of various spectral multipliers. By Chapter 8 of Street \cite{BookII}, $L_0$ being maximal subelliptic (see (\ref{maxsub})), is equivalent to that a short time heat kernel Gaussian bound holds for any (equivalently: for some) non-negative self-adjoint extension $L$: $\forall 0<t \leq 1$, $e^{-tL}(x,y)\in C^\infty(X\times X)$, and $\exists c>0, \forall$ ordered multi-indices $\alpha, \beta$,
\begin{equation*}
\big| W_x^\alpha W_y^\beta  e^{-tL}(x,y) \big| \lesssim_{\alpha, \beta} \big(\rho(x,y) + t^{1/2\kappa} \big)^{-\text{deg}\,\alpha-\text{deg}\,\beta} e^{-c \big( \rho(x,y)^{2\kappa}/t \big)^{1/(2\kappa -1)}} \mu\big(B(x,\rho(x,y) + t^{1/2\kappa})\big)^{-1}.    
\end{equation*}
This Gaussian bound implies the following lemmas, via the same argument as in the proof of Lemmas 2.1, 2.2, 4.1, 4.3 in \cite{DUONG}. 

\begin{Lemma}[\cite{DUONG}]\label{plancherel-estimate}
For all $t\in (0, 1]$, and for all ordered multi-indices $\alpha$,
$$
\int \big|W_x^\alpha e^{-tL}(x,y)\big|^2\,d\mu(y) \lesssim_\alpha \mu(B(x, t^{1/2\kappa}))^{-1} \,t^{-\text{deg}\,\alpha/\kappa}.
$$
For all bounded Borel measurable function $\eta$ supported in $[-\lambda_0/2, \lambda_0/2]$, for all $j\in \mathbb{N}$, and for all ordered multi-indices $\alpha$,
$$
\int \big|W_x^\alpha \eta(2^{-2\kappa j} L) (x,y) \big|^2\,d\mu(y) \lesssim_\alpha \|\eta\|_{L^\infty(\mathbb{R})}^2 \,\mu(B(x, 2^{-j}))^{-1} \,2^{2j\,\text{deg}\,\alpha}.
$$
\end{Lemma}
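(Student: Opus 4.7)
My plan is to deduce the first estimate directly from the pointwise Gaussian heat kernel bound displayed just above the lemma, and then deduce the second estimate from the first via a soft duality / spectral-theory argument.

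For the first estimate, I would start from
$$
|W_x^\alpha e^{-tL}(x,y)| \lesssim_\alpha \frac{e^{-c(\rho(x,y)^{2\kappa}/t)^{1/(2\kappa-1)}}}{(\rho(x,y)+t^{1/(2\kappa)})^{\text{deg}\,\alpha}\,\mu\big(B(x,\rho(x,y)+t^{1/(2\kappa)})\big)},
$$
square it, and use the trivial inequalities $(\rho(x,y)+t^{1/(2\kappa)})^{-2\,\text{deg}\,\alpha} \leq t^{-\text{deg}\,\alpha/\kappa}$ and $\mu(B(x,\rho(x,y)+t^{1/(2\kappa)})) \geq \mu(B(x,t^{1/(2\kappa)}))$ to peel off a single factor $\mu(B(x,t^{1/(2\kappa)}))^{-1}\, t^{-\text{deg}\,\alpha/\kappa}$ out of the integral. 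What remains is to bound
$$
\int_X \frac{e^{-2c(\rho(x,y)^{2\kappa}/t)^{1/(2\kappa-1)}}}{\mu\big(B(x,\rho(x,y)+t^{1/(2\kappa)})\big)}\,d\mu(y) \lesssim 1,
$$
which I would handle by the dyadic annular decomposition $A_k = \{y:\, 2^{k-1}t^{1/(2\kappa)} \leq \rho(x,y) < 2^k t^{1/(2\kappa)}\}$ for $k\geq 1$ together with the inner ball $A_0 = \{y:\,\rho(x,y) < t^{1/(2\kappa)}\}$: on each piece $\mu(A_k)/\mu(B(x,2^k t^{1/(2\kappa)})) \leq 1$ while the exponential factor is $\leq e^{-c' 2^{2\kappa(k-1)/(2\kappa-1)}}$, producing a convergent sum.

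For the second estimate, I would write $t_j := 2^{-2\kappa j} \in (0,1]$ and factor
$$
\eta_j(2^{-2\kappa j} L) = \psi_j(2^{-2\kappa j} L)\,e^{-t_j L}, \qquad \psi_j(\lambda) := \eta_j(\lambda)\,e^{\lambda}.
$$
Since $\eta_j$ is supported in $[-\lambda_0/2,\lambda_0/2]$, $\psi_j$ is bounded Borel with $\|\psi_j\|_\infty \lesssim \|\eta_j\|_\infty$, so by the spectral theorem $\|\psi_j(2^{-2\kappa j} L)\|_{L^2\to L^2} \lesssim \|\eta_j\|_\infty$. For any $f\in L^2(X)$, composing kernels and applying Cauchy--Schwarz gives
$$
\big|\big(W^\alpha \eta_j(2^{-2\kappa j}L) f\big)(x)\big| \;\leq\; \|W_x^\alpha e^{-t_j L}(x,\cdot)\|_{L^2(X)}\;\|\psi_j(2^{-2\kappa j} L) f\|_{L^2(X)},
$$
and invoking the first estimate with $t=t_j=2^{-2\kappa j}$ yields the bound $\lesssim \|\eta_j\|_\infty\,\mu(B(x,2^{-j}))^{-1/2}\,2^{j\,\text{deg}\,\alpha}\|f\|_{L^2}$. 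Thus $f\mapsto (W^\alpha\eta_j(2^{-2\kappa j}L)f)(x)$ is a bounded linear functional on $L^2(X)$ with norm squared equal to the left-hand side of the desired inequality (by Riesz representation, its representer being the complex conjugate of the Schwartz kernel $W_x^\alpha \eta_j(2^{-2\kappa j} L)(x,\cdot)$), which gives exactly the claimed estimate.

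The one mildly delicate point, which I expect to be the main obstacle to a fully rigorous write-up, is the identification of the Riesz representer with the Schwartz kernel in the second step: it requires verifying that the composition $W^\alpha e^{-t_j L}\circ \psi_j(2^{-2\kappa j}L)$ has a well-defined $L^2$-in-$y$ kernel that agrees pointwise with $W_x^\alpha\eta_j(2^{-2\kappa j}L)(x,\cdot)$. This is routine because the kernel of $W^\alpha e^{-t_j L}$ is smooth in both variables with the Gaussian bound (hence $L^2$ in each slice), so the composition integrates against any $L^2$ function. No deeper ingredient is needed; all the analytic work is already contained in the Gaussian heat kernel bound recalled before the lemma.
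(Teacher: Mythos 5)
Your proof is correct and follows essentially the same route as the proof in Duong--Ouhabaz--Sikora (Lemmas 4.1 and 4.3 there), which is precisely the reference the paper cites for this lemma: Gaussian bound plus dyadic annuli for the heat-kernel Plancherel estimate, then factoring $\eta(2^{-2\kappa j}L)=\psi_j(2^{-2\kappa j}L)\,e^{-2^{-2\kappa j}L}$ with $\psi_j=\eta\, e^{(\cdot)}$, using the spectral theorem on $L^2$, Cauchy--Schwarz, and a duality/Riesz argument. Two small points worth tightening in a final write-up: for $y\in A_k$ ($k\geq 1$) the denominator $\mu(B(x,\rho(x,y)+t^{1/2\kappa}))$ is bounded below by $\mu(B(x,2^{k-1}t^{1/2\kappa}))$, not by $\mu(B(x,2^{k}t^{1/2\kappa}))$, so the relevant ratio is $\mu(A_k)/\mu(B(x,2^{k-1}t^{1/2\kappa}))\lesssim 2^{Q}$ by doubling (still convergent after pairing with the exponential decay); and the kernel identification you flag as delicate is in fact immediate here, since the spectrum of $L$ is discrete and $\eta$ is compactly supported, so $\eta(2^{-2\kappa j}L)$ is finite rank with a smooth kernel.
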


\begin{Lemma}[\cite{DUONG}]\label{plancherel-decay}
For all $j, a \in \mathbb{N}$, and for all ordered multi-indices $\alpha$,
$$
\int \big|W_x^\alpha e^{ - 2^{-2\kappa j}L} (x,y) \big|^2 (1+ 2^j \rho(x,y))^a \,d\mu(y) \lesssim_{\alpha, a} \mu(B(x, 2^{-j}))^{-1}  \,2^{2j\,\text{deg}\,\alpha}.
$$
For all bounded Borel measurable function $\eta$ supported in $[-\lambda_0/2, \lambda_0/2]$, for all $j, a \in \mathbb{N}$, for all ordered multi-indices $\alpha$, and for all $\epsilon>0$,
$$
\int \big|W_x^\alpha \eta(2^{-2\kappa j}L) (x,y) \big|^2(1+2^j\rho(x,y))^{a}\,d\mu(y) \lesssim_{\alpha,a, \epsilon} \|\eta\|_{L^\infty_{\frac{a}{2}+\epsilon}(\mathbb{R})}^2 \,\mu(B(x, 2^{-j}))^{-1} \,2^{2j\,\text{deg}\,\alpha}.
$$
\end{Lemma}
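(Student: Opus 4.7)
The plan is to prove both estimates by a dyadic annular decomposition based on the Gaussian heat kernel bound stated just before the two lemmas. At scale $t = 2^{-2\kappa j}$, so that $t^{1/2\kappa} = 2^{-j}$, this Gaussian bound provides both an $L^\infty$ decay in $\rho(x,y)$ which beats any polynomial weight $(1+2^j\rho(x,y))^a$ and a derivative gain $2^{j\deg\alpha}$ from the factor $(\rho(x,y)+t^{1/2\kappa})^{-\deg\alpha}$. The doubling condition (\ref{doub}) then converts the volume factor $\mu(B(x,\rho(x,y)+2^{-j}))^{-1}$ on each annulus into $\mu(B(x,2^{-j}))^{-1}$ up to a polynomial in the annular index.

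For the first estimate I would split $X$ into annuli $A_k = \{y:2^{k-j-1}\leq\rho(x,y)<2^{k-j}\}$ for $k\geq 1$ and $A_0=B(x,2^{-j})$. On $A_k$ the Gaussian heat kernel bound gives
\[
|W_x^\alpha e^{-2^{-2\kappa j}L}(x,y)|^2 \lesssim 2^{2(j-k)\deg\alpha}\,e^{-2c\,2^{2k\kappa/(2\kappa-1)}}\,\mu(B(x,2^{k-j}))^{-2},
\]
and $(1+2^j\rho(x,y))^a\lesssim 2^{ka}$, $\mu(A_k)\lesssim \mu(B(x,2^{k-j}))\lesssim 2^{kQ}\mu(B(x,2^{-j}))$. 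Thus the contribution of $A_k$ is bounded by $2^{2j\deg\alpha}\,2^{k(a+Q-2\deg\alpha)}\,e^{-2c\,2^{2k\kappa/(2\kappa-1)}}\mu(B(x,2^{-j}))^{-1}$, and the double-exponential decay in $k$ lets the sum converge to the claimed bound $2^{2j\deg\alpha}\mu(B(x,2^{-j}))^{-1}$.

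For the second estimate, since $\eta$ is supported in $[-\lambda_0/2,\lambda_0/2]$ I would factor $\eta(\lambda)=h(\lambda)e^{-\lambda}$ with $h(\lambda):=\eta(\lambda)e^{\lambda}$ extended by zero off this compact set; multiplication by $e^{\lambda}$ preserves $L^\infty_{a/2+\epsilon}$ on a fixed compact support, so $\|h\|_{L^\infty_{a/2+\epsilon}}\lesssim \|\eta\|_{L^\infty_{a/2+\epsilon}}$. By the spectral theorem $\eta(2^{-2\kappa j}L)=e^{-2^{-2\kappa j}L}\,h(2^{-2\kappa j}L)$, and commuting $W_x^\alpha$ with the integration in the composition gives the kernel identity
\[
W_x^\alpha\eta(2^{-2\kappa j}L)(x,y)=\int W_x^\alpha e^{-2^{-2\kappa j}L}(x,z)\,h(2^{-2\kappa j}L)(z,y)\,d\mu(z).
\]
Using $(1+2^j\rho(x,y))^a\lesssim (1+2^j\rho(x,z))^a(1+2^j\rho(z,y))^a$, Cauchy–Schwarz reduces the problem to (i) the weighted heat-kernel bound of part 1 applied to $W_x^\alpha e^{-2^{-2\kappa j}L}(x,\cdot)$ with weight exponent $a$, and (ii) a weighted $L^2$-bound on $h(2^{-2\kappa j}L)(\cdot,y)$ by $\|h\|_{L^\infty_{a/2+\epsilon}}$ times $\mu(B(y,2^{-j}))^{-1/2}$.

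The main obstacle is establishing (ii), which is the genuine weighted Plancherel estimate for spectral multipliers and the reason an $\epsilon$ of extra smoothness appears. I would obtain it by expanding $h$ in its Fourier/cosine representation and trading smoothness in $\lambda$ for decay in $z$ via integration by parts against the spectral projector, following the argument of Duong–Ouhabaz–Sikora \cite{DUONG}; the Sobolev embedding $L^\infty_{a/2+\epsilon}\hookrightarrow W^{a/2,1}$-type control supplies precisely the $a/2$ derivatives of $h$ needed to integrate the $(1+2^j\rho(x,z))^{-a}$ weight to a constant, with $\epsilon$ closing a logarithmic gap. Since the only structural input of that argument is the short-time Gaussian heat kernel bound — which holds here by Chapter 8 of \cite{BookII} — the proof transcribes directly to our setting.
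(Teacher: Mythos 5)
The paper does not actually spell out a proof of this lemma; it only states that the Gaussian heat kernel bound "implies the following lemmas, via the same argument as in the proof of Lemmas 2.1, 2.2, 4.1, 4.3 in \cite{DUONG}." So there is no in-text argument to compare against literally, but a comparison against the cited argument and against internal consistency is still possible.

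Your treatment of the first estimate (the weighted bound for the heat kernel itself) is correct: the dyadic annular decomposition at scale $2^{-j}$, the double-exponential decay $e^{-2c\,2^{2\kappa k/(2\kappa-1)}}$ crushing the polynomial factors $2^{k(a+Q-2\deg\alpha)}$, and the volume conversion via doubling all go through, and this is exactly the standard computation behind the cited lemma.

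For the second estimate, there is a genuine gap. Your reduction writes $\eta(2^{-2\kappa j}L)$ as the kernel composition of $W^\alpha_x e^{-2^{-2\kappa j}L}$ with $h(2^{-2\kappa j}L)$, splits the weight $(1+2^j\rho(x,y))^a$ across the two factors, and then invokes Cauchy--Schwarz in $z$. As written, this does not close: applying Cauchy--Schwarz with constant weight gives a factor $\int_X\int_X|h(2^{-2\kappa j}L)(z,y)|^2(1+2^j\rho(z,y))^a\,d\mu(z)\,d\mu(y)$, i.e.\ the weighted Plancherel bound integrated once more in $y$ over all of $X$, which costs a factor on the order of the number of $2^{-j}$-balls covering $X$ and destroys the estimate as $j\to\infty$. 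One can repair this by a Schur-type Cauchy--Schwarz with weight $|W_x^\alpha e^{-sL}(x,z)|$ itself (split $|K||H| = (|K|w^{1/2})^{1/2}\cdot(|K|w^{1/2}|H|^2)^{1/2}$), which uses $\int|W^\alpha_x e^{-sL}(x,z)|(1+2^j\rho(x,z))^a\,d\mu(z)\lesssim 2^{j\deg\alpha}$ and a Fubini argument in the second factor; but this is a different inequality than the one you name. Even with that repair, the step labeled (ii) --- the weighted Plancherel bound for $h(2^{-2\kappa j}L)$ with no derivatives --- is precisely the hard content of the lemma and is left entirely to "follow \cite{DUONG}." The Duong--Ouhabaz--Sikora argument does not proceed by kernel composition and Cauchy--Schwarz at all: as the paper's own remark at the end of Section 6 makes explicit, it Fourier-inverts $\eta(\lambda)e^{\lambda}$ and writes
\[
\eta(2^{-2\kappa j}L)=\frac{1}{2\pi}\int \mathcal{F}\bigl(\eta(\cdot)e^{\cdot}\bigr)(\tau)\,e^{(i\tau-1)2^{-2\kappa j}L}\,d\tau,
\]
then applies Minkowski's integral inequality and a weighted $L^2$ bound for the complex-time heat kernel $e^{(i\tau-1)sL}$ which grows polynomially in $|\tau|$; the $W_x^\alpha$ derivative lands directly on the complex heat kernel, so your kernel-composition reduction is superfluous, and the exponent $a/2+\epsilon$ arises from pairing the polynomial growth in $|\tau|$ against the decay of $\widehat{\eta e^{\cdot}}$. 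You should replace steps (i)--(ii) by this Fourier representation and the complex heat kernel estimate.
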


Note for every bounded Borel measurable function $\eta$ on $\mathbb{R}$,
$$
\bar \eta(L)(x,y) = \overline{\eta(L)(y,x)},
$$
where $\bar \eta$ denotes the complex conjugate of $\eta$. Thus when replacing $W_x$ with $W_y$ and replacing $d\mu(y)$ with $d\mu(x)$ in the above two lemmas, the corresponding estimates also hold.

Denote by $E_{L}$ the spectral resolution of identity for $L$. We also need estimates on the kernel of $E_L(0)$:  
\begin{Lemma}\label{one-pre-elementary}
$\{(E_{L}(0),1)\}$ is a bounded set of elementary operators.
\end{Lemma}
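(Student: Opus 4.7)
The plan is to exhibit $E_L(0)$ as the finite-rank orthogonal projection onto $\ker L \subseteq C^\infty(X)$, deduce the pre-elementary kernel estimate from smoothness together with the compactness of $X$, and settle the elementary-decomposition condition by a trivial self-reference permitted by the maximality formulation in Definition \ref{elementary}.

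First I invoke Lemma \ref{compact2}: $L$ has discrete spectrum, so $\ker L$ is a finite-dimensional subspace of $L^2(X,\mu)$. To see that $\ker L \subseteq C^\infty(X)$, I use the subelliptic gain $\|(I+L)^{-1}f\|_{L^2_\epsilon(X)} \lesssim \|f\|_{L^2(X)}$ for some $\epsilon>0$ from Chapter 8 of Street \cite{BookII}, already exploited in the proof of Lemma \ref{compact2}. Any $\psi \in \ker L$ satisfies $\psi = (I+L)^{-n}\psi$, so iterating gives $\psi \in L^2_{n\epsilon}(X)$ for every $n$, and Sobolev embedding yields $\psi \in C^\infty(X)$. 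Fixing an orthonormal basis $\psi_1,\ldots,\psi_N$ of $\ker L$, the kernel of $E_L(0)$ is
$$E_L(0)(x,y) = \sum_{k=1}^N \psi_k(x)\,\overline{\psi_k(y)} \in C^\infty(X\times X).$$

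Next I verify the pre-elementary estimate (\ref{sym2}) in the case $j=0$. From the explicit formula, for any ordered multi-indices $\alpha,\beta$,
$$\bigl|W_x^\alpha W_y^\beta E_L(0)(x,y)\bigr| \leq \sum_{k=1}^N \|W^\alpha \psi_k\|_{L^\infty(X)}\, \|W^\beta \psi_k\|_{L^\infty(X)} \lesssim_{\alpha,\beta} 1.$$
Since $X$ is compact with finite diameter $D$, the right-hand side of (\ref{sym2}) with $j=0$ obeys $(1+\rho(x,y))^{-k}/\mu(B(x,1+\rho(x,y))) \geq (1+D)^{-k}/\mu(X) \gtrsim_k 1$ for every $k\in\mathbb{N}$, so the required pre-elementary bound holds.

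For the decomposition, I take $L_1 = L_2 = 1$ and $\mathcal{F}_1 = \mathcal{F}_2 = \{(b\,W^\alpha, 0)\}$ with $b \equiv 1$ and $\alpha$ the empty multi-index (so $W^\alpha$ is the identity, $\kappa_{a,1} = 0$, and $\epsilon_{a,1} = 1$). With $j=0$, the prefactor $2^{-(\epsilon_{1,1}+\epsilon_{2,1}+\kappa_{1,1}+\kappa_{2,1})j}$ equals $1$, and the decomposition in Definition \ref{elementary} reduces to $E_L(0) = E_{1,1}$, satisfied by setting $E_{1,1} := E_L(0)$; the auxiliary collection $\{(E_{1,1},1)\}$ then coincides with $\{(E_L(0),1)\}$. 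Thus the single-element family $\mathcal{G}' := \{\{(E_L(0),1)\}\}$ satisfies both clauses of Definition \ref{elementary} with respect to itself, and by maximality of $\mathcal{G}$ we conclude $\mathcal{G}' \subseteq \mathcal{G}$, i.e.\ $\{(E_L(0),1)\}\in \mathcal{G}$. The principal subtlety is the smoothness of $\ker L$; once that is secured, the kernel bound is immediate from compactness and the decomposition step is a formal self-reference check.
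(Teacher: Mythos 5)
Your proof is correct and takes essentially the same route as the paper: identify $E_L(0)$ as a finite-rank projection onto the smooth, finite-dimensional kernel of $L$, write the kernel as $\sum_k \psi_k(x)\overline{\psi_k(y)}$, deduce the pre-elementary bound from compactness of $X$, and close the recursion with the trivial decomposition $E_L(0)=E_L(0)$ via maximality of $\mathcal{G}$. One minor precision point: discreteness of the spectrum alone does not force $\dim\ker L<\infty$; the clean justification (which the paper gives) is that $\ker L$ is a closed subspace of $\mathrm{Range}((I+L)^{-1})$ and $(I+L)^{-1}$ is compact, so $\ker L$ can contain no infinite-dimensional closed subspace — your iteration argument $\psi=(I+L)^{-n}\psi$ already has all the ingredients to see this.
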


\begin{proof}
Note $E_{L}(0)$ is an orthogonal projection on $L^2(X)$, and $\text{Range}\,(E_{L}(0)) = \text{Null}\,(I-E_{L}(0)) =\text{Null}\,(L)$. Since $I-E_L(0)$ is bounded on $L^2(X)$, $\text{Range}\,(E_{L}(0))$ is closed. If $Lu =0$, by the hypoellipticity of $L$, $u\in C^\infty(X)$. Thus $\text{Range}\,(E_{L}(0))\subseteq C^\infty(X)$. Since $(I+L)^{-1}$ is compact, $\text{Range}\,((I+L)^{-1})$ contains no infinitely-dimensional closed subspace. Hence $\text{Range}\,(E_{L}(0)) \subseteq \text{Range}\,((I+L)^{-1})$ is finite-dimensional. Let $\{e_1, \ldots, e_q\}$ be an orthonormal basis of $\text{Range}\,(E_{L}(0)) \subseteq C^\infty(X)\subseteq L^2(X)$. We have
\begin{align*}
E_{L}(0)f(x)= \sum_{l=1}^q \langle f, e_l \rangle e_l(x) = \int \sum_{l=1}^q e_l(x) \overline{e_l(y)} f(y)\,d\mu(y).
\end{align*}
Thus $E_{L}(0)(x,y) = \sum_{l=1}^q e_l(x) \overline{e_l(y)}$.
Since $X$ is compact, for all ordered multi-indices $\alpha$, $\beta$, and for all $k\in \mathbb{N}$,
$$
\Big| W_x^\alpha W_y^\beta E_{L}(0)(x,y) \Big| \lesssim_{\alpha,\beta} 1\lesssim_k (1+\rho(x,y))^{-k} \mu\big(B(x,1+\rho(x,y))\big)^{-1}.
$$
And since $E_L(0) = E_L(0)$, by Definition \ref{elementary}, $\{(E_{L}(0),1)\}$ is a bounded set of elementary operators.
\end{proof}

\begin{proof}[Proof of Lemma \ref{etaiselementary}]

Let $\mathcal{G}_0$ consist of all the sets $\mathcal{E} = \big\{\big(\eta_\gamma(2^{-2\kappa j_\gamma} L), 2^{-j_\gamma}\big): \gamma \in \Gamma\big\}$ satisfying that every $j_\gamma \in \mathbb{N}$, every $\eta_{\gamma}$ with $j_\gamma =0$ is a bounded Borel measurable function supported on $[-\lambda_0/2, \lambda_0/2]$, $\{\eta_\gamma(0): j_\gamma =0\}$ is bounded, and that $\{\eta_\gamma: j_\gamma>0\}$ is bounded in $C_c^\infty[2^{-2\kappa -2}\lambda_0, \lambda_0/2]$. Note $\{(\eta_j(2^{-2\kappa j}L), 2^{-j}):j\in \mathbb{N}\}$ given in Lemma \ref{etaiselementary} belongs to $\mathcal{G}_0$.

Fix an arbitrary $\mathcal{E} = \big\{\big(\eta_\gamma(2^{-2\kappa j_\gamma} L), 2^{-j_\gamma}\big): \gamma \in \Gamma\big\} \in \mathcal{G}_0$. We first show $\mathcal{E}$ is a bounded set of pre-elementary operators. I.e., we need to show that $\forall$ordered multi-indices $\alpha, \beta$, $\forall k\in \mathbb{N}$, 
$$
    \big| W_x^\alpha W_y^\beta \eta_\gamma(2^{-2\kappa j_\gamma} L) (x,y) \big| \lesssim_{\alpha, \beta, k} 2^{j_\gamma\,\text{deg}\,\alpha} 2^{j_\gamma \,\text{deg}\,\beta} \frac{(1+2^{j_\gamma}\rho(x,y))^{-k}}{\mu(B(x, 2^{-j_\gamma} + \rho(x,y)))}, \quad \forall \gamma \in \Gamma.
$$
For $\gamma \in \Gamma$ with $j_\gamma =0$, $\eta_\gamma(L) = \eta_\gamma (0)E_{L}(0)$. Since $\eta_\gamma(0)$ is bounded uniformly for $\gamma$ with $j_\gamma =0$, by Lemma \ref{one-pre-elementary}, $\{(\eta_\gamma(L), 1): \gamma\in \Gamma, j_\gamma =0\}$ is a bounded set of pre-elementary operators. As for $\gamma\in \Gamma$ with $j_\gamma >0$, we divide into two cases:

\textbf{Case 1. on-diagonal:} $ \rho(x,y) \leq 2^{-j_\gamma}$.

By Lemma \ref{plancherel-estimate} and the doubling condition (\ref{doub}), for every $\gamma$ with $j_\gamma>0$,
\begin{align*}
&\quad \big| W_x^\alpha  W_y^\beta \eta_\gamma(2^{-2\kappa j_\gamma} L)(x,y)\big| & \\
& \leq \Big(\int \Big|W_x^\alpha \sqrt{|\eta_\gamma|}(2^{-2\kappa j_\gamma} L)(x,z)\Big|^2\, dz \Big)^{1/2} \cdot \Big( \int \Big| W_y^\beta \big(\sqrt{|\eta_\gamma|} \text{ sign}\,\eta_\gamma\big)(2^{-2\kappa j_\gamma} L)(z,y) \Big|^2\,dz \Big)^{1/2}\\
& \lesssim_{\alpha, \beta} 2^{j_\gamma \,\text{deg}\,\alpha + j_\gamma\,\text{deg}\,\beta} \big\| \sqrt{|\eta_\gamma|} \big\|_{L^\infty(\mathbb{R})}^2 \mu(B(x,2^{-j_\gamma}))^{-1/2} \mu(B(y, 2^{-j_\gamma}))^{-1/2}\\
& \lesssim 2^{j_\gamma \,\text{deg}\,\alpha + j_\gamma\,\text{deg}\,\beta} \big\| \eta_\gamma \big\|_{L^\infty(\mathbb{R})} \mu(B(x,2^{-j_\gamma}))^{-1} (1+ 2^{j_\gamma}\rho(x,y))^{Q/2}\\
& \lesssim_k 2^{j_\gamma \,\text{deg}\,\alpha + j_\gamma\,\text{deg}\,\beta}  \big\| \eta_\gamma \big\|_{L^\infty(\mathbb{R})} \mu(B(x,2^{-j_\gamma}+\rho(x,y)))^{-1} (1+ 2^{j_\gamma}\rho(x,y))^{-k}.
\end{align*}

\textbf{Case 2. off-diagonal:} $\rho(x,y) \geq 2^{-j_\gamma}$.

Let $\psi_\gamma(\lambda) = \eta_\gamma(\lambda) e^\lambda$. Then $\eta_\gamma(\lambda) = \psi_\gamma(\lambda) e^{-\lambda}$, and $\{\psi_\gamma: j_\gamma>0\}$ is bounded in $C_c^\infty([-\lambda_0/2, \lambda_0/2])$.


By Lemmas \ref{plancherel-estimate} and \ref{plancherel-decay}, and the doubling condition (\ref{doub}), we have for $\gamma$ with $j_\gamma>0$,
\begin{align*}
&\quad \big|  W_x^\alpha  W_y^\beta \eta_\gamma(2^{-2\kappa j_\gamma}L) (x,y) \big|\\
& = \Big|\int W_x^\alpha \psi_\gamma(2^{-2\kappa j_\gamma}L)(x,z) W_y^\beta e^{-2^{-2\kappa j_\gamma}L} (z,y)\,dz \Big|\\
& \leq \int_{\rho(y,z) \geq \frac{1}{3}\rho(x,y)} \big| W_x^\alpha \psi_\gamma(2^{-2\kappa j_\gamma}L)(x,z) W_y^\beta e^{-2^{-2\kappa j_\gamma}L} (z,y)\big|\,dz  \\
& \quad + \int_{\rho(x,z)\geq \frac{1}{3}\rho(x,y)} \big| W_x^\alpha \psi_\gamma(2^{-2\kappa j_\gamma}L)(x,z) W_y^\beta e^{-2^{-2\kappa j_\gamma}L} (z,y)\big|\,dz \\
& \leq \Big(\int \big| W_x^\alpha \psi_\gamma(2^{-2\kappa j_\gamma}L)(x,z) \big|^2\,dz\Big)^{1/2} \Big( \int_{\rho(y,z) \geq \frac{1}{3}\rho(x,y)} \big|W_y^\beta e^{-2^{-2\kappa j_\gamma}L} (z,y)\big|^2\,dz \Big)^{1/2} \\
& \quad + \Big(\int_{\rho(x,z)\geq \frac{1}{3}\rho(x,y)} \big|W_x^\alpha \psi_\gamma(2^{-2\kappa j_\gamma}L)(x,z)\big|^2 \,dz\Big)^{1/2} \Big(\int \big|W_y^\beta e^{-2^{-2\kappa j_\gamma}L} (z,y)\big|^2\,dz \Big)^{1/2}\\
&\lesssim_{\alpha, \beta, k, \epsilon} 2^{j_\gamma\,\text{deg}\,\alpha + j_\gamma\,\text{deg}\,\beta} \|\psi_\gamma\|_{L^\infty_{k+Q+\epsilon/2}(\mathbb{R})} (1+ 2^{j_\gamma} \rho(x,y))^{-k-Q } \mu(B(x, 2^{-j_\gamma}))^{-1/2} \mu(B(y, 2^{-j_\gamma}))^{-1/2}\\
&\lesssim_{k,\epsilon} 2^{j_\gamma\,\text{deg}\,\alpha + j_\gamma\,\text{deg}\,\beta} \|\eta_\gamma\|_{L^\infty_{k+Q+\epsilon}(\mathbb{R})} (1+ 2^{j_\gamma} \rho(x,y))^{-k} \mu(B(x, 2^{-j_\gamma} + \rho(x,y)))^{-1}.
\end{align*}
Therefore $\mathcal{E}$ is a bounded set of pre-elementary operators. In particular, each $\eta_\gamma(2^{-2\kappa j_\gamma} L) \in C^\infty(X\times X)$.

Next we verify the second condition in Definition \ref{elementary} for $\mathcal{G}_0$. For $\gamma$ with $j_\gamma>0$, let $\widetilde \eta_\gamma(\lambda) := \frac{\eta_\gamma(\lambda)}{\lambda^2}$. $\widetilde \eta_\gamma$ with $j_\gamma>0$ is still bounded in $C_c^\infty[2^{2\kappa -2} \lambda_0, \lambda_0/2]$.
(\ref{form}) implies that when acting on $C^\infty(X)$,
$$
L = \sum_{\text{deg}\,\alpha \leq 2\kappa} b_\alpha(x) W^\alpha, \quad \text{for some } b_\alpha \in C^\infty(X).
$$
Therefore 
\begin{align*}
\eta_\gamma(2^{-2\kappa j_\gamma}L) &= (2^{-2\kappa j_\gamma}L) \, \widetilde \eta_\gamma(2^{-2\kappa j_\gamma}L) \, (2^{-2\kappa j_\gamma}L) \\
&=\Big(2^{-2\kappa j_\gamma} \sum_{\text{deg}\,\alpha\leq 2\kappa} b_\alpha(x) W^\alpha \Big) \widetilde \eta_\gamma( 2^{-2\kappa j_\gamma} L) \Big(2^{-2\kappa j_\gamma} \sum_{\text{deg}\,\beta\leq 2\kappa} b_\beta(x) W^\beta\Big)\\
&= \sum_{\text{deg}\,\alpha, \text{deg}\,\beta \leq 2\kappa} 2^{-4\kappa j_\gamma} \,\big(b_\alpha(x)  W^\alpha\big) \, \widetilde \eta_\gamma( 2^{-2\kappa j_\gamma} L)\, \big(b_\beta(x) W^\beta\big).
\end{align*}
For $\gamma$ with $j_\gamma=0$, $\eta_\gamma(L) = \eta_\gamma(L)$. And since 
$$
\{(\eta_\gamma(L), 1):j_\gamma =0\} \cup \{(\widetilde \eta_\gamma(2^{-2\kappa j_\gamma} L), 2^{-j_\gamma}): j_\gamma>0\} \in \mathcal{G}_0,
$$
by Definition \ref{elementary}, $\mathcal{G}_0 \subseteq \mathcal{G}$. Thus $\{(\eta_j(2^{-2\kappa j}L), 2^{-j}):j\in \mathbb{N}\}\in \mathcal{G}_0$ is a bounded set of elementary operators.
\end{proof}

\section{Singular integral operators}\label{sectionnis}
In this section, we prove that $m(L)$ is a singular integral operator when $m$ satisfies an appropriate Mihlin-H\"ormander type condition (Theorem \ref{NIS}). Recall $m: [0,\infty) \to \mathbb{C}$ is a bounded Borel measurable function, and that $\lambda_0$ is the smallest nonzero spectrum of $L$. 

\begin{Lemma}\label{decompose-spectrum}
If
$$
\sup_{\lambda >0} \lambda^{-t} \big|(\lambda \partial_\lambda)^k m(\lambda) \big|<\infty, \quad \forall k\in \mathbb{N},    
$$
we can write
$$
m(\lambda) = \sum_{j\in \mathbb{N}} 2^{2\kappa t j} m_j(2^{-2\kappa j} \lambda), \quad \text{for } \lambda \in [0,\infty),
$$
where $\{m_j\}_{j>0}$ is bounded in $C_c^\infty[2^{-2\kappa -2} \lambda_0, \lambda_0/2]$, and $m_0$ is a bounded Borel measurable function supported on $[0, \lambda_0/2]$. 

\end{Lemma}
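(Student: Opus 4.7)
The natural strategy is to dyadically localize $m$ using the partition of unity in (\ref{partition}). Multiplying the identity $\sum_{j\in\mathbb{N}} \phi_j(2^{-2\kappa j}\lambda) \equiv 1$ on $[0,\infty)$ by $m(\lambda)$ and performing the change of variables $\mu = 2^{-2\kappa j}\lambda$ on the $j$-th term leads one to define
$$m_j(\mu) := 2^{-2\kappa t j}\, \phi_j(\mu)\, m(2^{2\kappa j}\mu), \quad j\in\mathbb{N},$$
so that $\phi_j(2^{-2\kappa j}\lambda) m(\lambda) = 2^{2\kappa t j} m_j(2^{-2\kappa j}\lambda)$ and summing over $j$ recovers $m(\lambda)$; for each fixed $\lambda\geq 0$ only finitely many terms are nonzero, so there is no convergence issue. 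The task is then to verify the two claimed size conditions on $\{m_j\}$.

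For $j=0$, the function $m_0(\mu) = \phi_0(\mu)\,m(\mu)$ is automatically bounded and Borel measurable, since $m$ is bounded Borel by the standing assumption on spectral multipliers and $\phi_0\in C_c^\infty$; its support lies in $\mathrm{supp}\,\phi_0 \cap [0,\infty) \subseteq [0,\lambda_0/2]$, which is what is claimed.

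For $j>0$, the support of $m_j$ lies in $\mathrm{supp}\,\phi \subseteq [2^{-2\kappa-2}\lambda_0, \lambda_0/2]$, and on this interval $\lambda = 2^{2\kappa j}\mu \geq 2^{-2\kappa-2}\lambda_0 > 0$, so the Mihlin-H\"ormander hypothesis applies. The key observation is that the scale-invariant derivative satisfies $\mu\partial_\mu = \lambda\partial_\lambda$ under the rescaling $\lambda = 2^{2\kappa j}\mu$, so by the Leibniz rule for the derivation $\mu\partial_\mu$,
$$(\mu\partial_\mu)^k m_j(\mu) = 2^{-2\kappa t j}\sum_{a+b=k}\binom{k}{a}(\mu\partial_\mu)^a \phi_j(\mu)\cdot \bigl[(\lambda\partial_\lambda)^b m\bigr](2^{2\kappa j}\mu).$$
Inserting $|(\lambda\partial_\lambda)^b m(\lambda)| \lesssim_b \lambda^t = 2^{2\kappa t j}\mu^t$, the factor $2^{-2\kappa t j}$ cancels exactly; since $\mu$ ranges over the fixed compact subinterval $[2^{-2\kappa-2}\lambda_0, \lambda_0/2]$ of $(0,\infty)$ on which $\phi_j = \phi$ is a single fixed smooth function, the right-hand side is bounded uniformly in $j$. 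Converting between $(\mu\partial_\mu)^k$ and $\partial_\mu^k$ is harmless because $\mu$ is bounded away from $0$ on $\mathrm{supp}\,\phi$, yielding $C_c^\infty$-boundedness of $\{m_j\}_{j>0}$ as a subset of $C_c^\infty[2^{-2\kappa-2}\lambda_0,\lambda_0/2]$.

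There is no real obstacle here: the lemma is a routine scaling exercise, and the only point worth emphasizing is that the scale-invariant operator $\lambda\partial_\lambda$ appearing in the hypothesis is precisely matched to the dyadic rescaling built into the partition of unity, which is what makes the factors $2^{\pm 2\kappa t j}$ cancel and produce uniform estimates.
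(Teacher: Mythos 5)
Your proposal is correct and takes essentially the same approach as the paper: the same dyadic decomposition via the partition of unity $\{\phi_j\}$, the same definition $m_j(\mu)=2^{-2\kappa t j}\phi_j(\mu)m(2^{2\kappa j}\mu)$, and the same use of the Mihlin--H\"ormander hypothesis to get uniform $C_c^\infty$ bounds. The only stylistic difference is that you work directly with the derivation $\mu\partial_\mu$ and its Leibniz rule, whereas the paper applies $\partial_\lambda^k$ and then compares to scale-invariant derivatives; the two are equivalent since the support is bounded away from $0$.
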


Before we prove this lemma, we first derive Theorem \ref{NIS} from it.

\begin{proof}[Proof of Theorem \ref{NIS}]
By Lemmas \ref{etaiselementary} and \ref{decompose-spectrum}, we can write
$$
m(L) = \sum_{j\in \mathbb{N}} 2^{2\kappa t j} m_j(2^{-2\kappa j} L),
$$
where $\big\{ \big(m_j(2^{-2\kappa j} L), 2^{-j}\big): j\in \mathbb{N} \big\}$ is a bounded set of elementary operators. Then by Proposition \ref{sum}, $m(L)$ is a singular integral operator of order $2\kappa t$. 
\end{proof}

\begin{proof}[Proof of Lemma \ref{decompose-spectrum}]
Recall $\{\phi_j\}_{j\in \mathbb{N}}$ satisfies (\ref{partition}). Let 
\begin{equation}\label{mj}
m_j(2^{-2\kappa j} \lambda) := 2^{-2\kappa t j} m(\lambda) \phi_j(2^{-2\kappa j} \lambda).   
\end{equation}
Then we have
$$
m(\lambda ) = m(\lambda) \sum_{j\in \mathbb{N}}  \phi_j(2^{-2\kappa j}\lambda) = \sum_{j\in \mathbb{N}} 2^{2\kappa t j} m_j(2^{-2\kappa j} \lambda).
$$
For $j>0$,  
\begin{align*}
&\quad \sup_{\lambda>0} \big|\partial_\lambda^k m_j(\lambda)\big| = \sup_{\lambda\in [2^{-2\kappa -2} \lambda_0, \lambda_0/2]} 2^{-2\kappa t j}\Big|\partial_\lambda^k \Big( m(2^{2\kappa j}\lambda) \phi_j(\lambda)\Big) \Big| \\
&\lesssim_k 2^{-2\kappa t j} \sum_{l\leq k} \sup_{\lambda\in [2^{-2\kappa -2} \lambda_0, \lambda_0/2]} \big|\partial_\lambda^l m(2^{2\kappa j}\lambda)\big| = 2^{-2\kappa t j} \sum_{l\leq k} \sup_{\lambda\in [2^{-2\kappa -2} \lambda_0, \lambda_0/2]} \Big|2^{2l\kappa j} (\partial^l m)(2^{2\kappa j}\lambda)\Big| \\
&\approx_{k,t}   \sum_{l\leq k} \sup_{\lambda\in [2^{-2\kappa -2} \lambda_0, \lambda_0/2]} (2^{2\kappa j}\lambda)^{-t} \Big|(2^{2\kappa j}\lambda)^l (\partial^l m)(2^{2\kappa j}\lambda)\Big|\\
&\leq \sum_{l\leq k} \sup_{\lambda>0} \lambda^{-t} \big|\lambda^l \partial_\lambda^l m(\lambda)\big| \approx_k \sum_{l\leq k} \sup_{\lambda>0} \lambda^{-t} \big|(\lambda \partial_\lambda)^l m(\lambda)\big|.
\end{align*}
\end{proof}

\section{Equivalence between spaces}\label{spaceequiv}
In this section, we will prove that the two standard ways to define non-isotropic Besov and Triebel-Lizorkin spaces are equivalent (Theorem \ref{full-equivalence}), from which Theorem \ref{sobolev-equivalence} follows. The proof is simple using the framework of Street \cite{BookII}. We first need the following ``Calder\'on Reproducing Formula''.
\begin{Lemma}\label{calderon}
For every $j\in \mathbb{N}$,
$$
\phi_j(2^{-2\kappa j} L) = \phi_j(2^{-2\kappa j} L) \sum_{\substack{l\in \mathbb{N}\\ |l-j|\leq 1}} \phi_l(2^{-2\kappa l} L).
$$
\end{Lemma}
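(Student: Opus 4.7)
The plan is to reduce the operator identity to a scalar identity on $[0,\infty)$ via the spectral theorem, so the whole content is a support-disjointness statement for the functions $\phi_l(2^{-2\kappa l}\cdot)$.

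First I would start from the partition of unity (\ref{partition}): multiplying the identity $\sum_{l\in\mathbb{N}} \phi_l(2^{-2\kappa l}\lambda) \equiv 1$ on $[0,\infty)$ by $\phi_j(2^{-2\kappa j}\lambda)$ gives
\begin{equation*}
\phi_j(2^{-2\kappa j}\lambda) = \phi_j(2^{-2\kappa j}\lambda)\sum_{l\in\mathbb{N}} \phi_l(2^{-2\kappa l}\lambda),\qquad \lambda\in[0,\infty).
\end{equation*}
The goal becomes to show that the terms with $|l-j|\geq 2$ drop out, i.e.\ that $\phi_j(2^{-2\kappa j}\cdot)$ and $\phi_l(2^{-2\kappa l}\cdot)$ have disjoint support as soon as $|l-j|\geq 2$.

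Next I would read off the supports from (\ref{phi}). For $j>0$, $\phi_j=\phi$ is supported in $[2^{-2\kappa-2}\lambda_0,\lambda_0/2]$, so $\phi_j(2^{-2\kappa j}\cdot)$ is supported in $[2^{2\kappa(j-1)-2}\lambda_0,\,2^{2\kappa j-1}\lambda_0]$; $\phi_0$ is supported in $[-\lambda_0/2,\lambda_0/2]$. To check disjointness for $l\geq j+2$, I would compare the upper endpoint of $\phi_j$'s support with the lower endpoint of $\phi_l$'s: the inequality $2^{2\kappa j-1}\lambda_0 < 2^{2\kappa(l-1)-2}\lambda_0$ reduces to $2\kappa(l-j-1)>1$, which holds because $l-j-1\geq 1$ and $\kappa\geq 1$. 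The case $j=0$ and $l\geq 2$ is the same calculation once one notices that the support of $\phi_0$ stops at $\lambda_0/2$ while $\phi_l(2^{-2\kappa l}\cdot)$ starts at $2^{2\kappa(l-1)-2}\lambda_0\geq 2^{2\kappa-2}\lambda_0\geq\lambda_0$. The case $l\leq j-2$ is symmetric.

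Finally, from the pointwise identity
\begin{equation*}
\phi_j(2^{-2\kappa j}\lambda) = \phi_j(2^{-2\kappa j}\lambda)\sum_{|l-j|\leq 1,\,l\in\mathbb{N}} \phi_l(2^{-2\kappa l}\lambda),\qquad \lambda\in[0,\infty),
\end{equation*}
the Borel functional calculus for the non-negative self-adjoint operator $L$ (whose spectrum lies in $[0,\infty)$ by Lemma \ref{compact2}) yields the claimed operator identity. There is essentially no obstacle here; the only point requiring a second of care is the separate treatment of $j=0$, where $\phi_0$ has a wider support than the $\phi_j$ with $j>0$, but the arithmetic $2\kappa\geq 2$ still makes the supports separate for $|l-j|\geq 2$.
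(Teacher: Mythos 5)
Your proof is correct and follows essentially the same route as the paper: reduce the operator identity to the scalar identity via the Borel functional calculus, using the partition of unity (\ref{partition}) and the support-disjointness of $\phi_j(2^{-2\kappa j}\cdot)$ and $\phi_l(2^{-2\kappa l}\cdot)$ for $|l-j|\geq 2$ coming from the definition (\ref{phi}). The paper states this in one line; you have simply carried out the (correct) arithmetic on the supports explicitly, including the minor separate check at $j=0$.
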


\begin{proof}
By (\ref{partition}) and the way we define the $\phi_j$,
$$
\phi_j(2^{-2\kappa j} \lambda) = \phi_j(2^{-2\kappa j} \lambda) \sum_{\substack{l\in \mathbb{N}\\ |l-j|\leq 1}} \phi_l(2^{-2\kappa l} \lambda), \quad \text{for } \lambda \in [0,\infty).
$$
The conclusion then follows by the spectral theorem.
\end{proof}

We also need the following two lemmas.
\begin{lemma}[\cite{BookII}]\label{gain}
Let $\mathcal{E}$ be a bounded set of elementary operators, and pick
$$
\mathcal{F}:=\{(F_j, 2^{-j}): j\in \mathbb{N}\}, ~\mathcal{F}':=\{(F_j', 2^{-j}): j\in \mathbb{N}\} \subseteq \mathcal{E}.
$$
For every $k\in \mathbb{Z}$, define a vector-valued operator $\mathcal{T}_k$ by
$$
\mathcal{T}_k \{f_j\}_{j\in \mathbb{N}} := \{F_j F_{j+k}' f_j\}_{j\in \mathbb{N}}.
$$
Then for $p\in (1, \infty), q\in (1,\infty], N\in \mathbb{N}$,
$$
\sup_{\mathcal{F}, \mathcal{F}'} \|\mathcal{T}_k\|_{L^p(X, l^q(\mathbb{N})) \to L^p(X, l^q(\mathbb{N}))}\lesssim_{\mathcal{E}, p,q,N} 2^{-N|k|}, \quad \forall k\in \mathbb{Z}.
$$
and for $p,q \in [1,\infty], N\in \mathbb{N}$,
$$
\sup_{\mathcal{F}, \mathcal{F}'} \|\mathcal{T}_k\|_{l^q(\mathbb{N}, L^p(X)) \to l^q(\mathbb{N}, L^p(X))} \lesssim_{\mathcal{E}, p,q,N} 2^{-N|k|}, \quad \forall k\in \mathbb{Z}.
$$
\end{lemma}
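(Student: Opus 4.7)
I plan to prove Lemma \ref{gain} by first establishing a kernel-composition estimate for $T_{j,k} := F_j F_{j+k}'$ with a gain of $2^{-N|k|}$ for arbitrary $N \in \mathbb{N}$, and then deducing the two vector-valued bounds via standard maximal-function arguments.

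\textbf{Step 1: composition kernel estimate.} I would show that for every $N \in \mathbb{N}$ there is a constant $C_N$, depending only on $\mathcal{E}$, such that for all choices of $(F_j, 2^{-j}), (F_{j+k}', 2^{-(j+k)}) \in \mathcal{E}$ and all $x, y \in X$,
$$
\bigl|T_{j,k}(x,y)\bigr| \le C_N \, 2^{-N|k|} \cdot \frac{(1 + 2^{\min(j, j+k)} \rho(x,y))^{-N}}{\mu\bigl( B(x, 2^{-\min(j, j+k)} + \rho(x,y)) \bigr)}.
$$
I treat the case $k \ge 0$; the case $k < 0$ follows by passing to adjoints and noting that the class of bounded sets of elementary operators is closed under $*$ by the left/right symmetry of Definition \ref{elementary}. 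To produce the decay, I would iterate the recursive clause of Definition \ref{elementary} on $F_{j+k}'$: one unfolding writes $F_{j+k}'$ as a finite sum of terms of the form $2^{-(\epsilon_1 + \epsilon_2 + \kappa_1 + \kappa_2)(j+k)}\,(b_1 W^{\alpha_1})\, E_{l_1,l_2}\, (b_2 W^{\alpha_2})$, with $\{(E_{l_1, l_2}, 2^{-(j+k)})\}$ again a bounded set of elementary operators. Integration by parts moves $W^{\alpha_1}$ onto the $y$-variable of $F_j$'s kernel at a cost of at most $2^{j\,\text{deg}\,\alpha_1}$; combined with the prefactor, each term carries a size of $2^{-(\epsilon_1 + \kappa_1)(j+k) + j\,\text{deg}\,\alpha_1} \le 2^{-\kappa_1 k - \epsilon_1 (j+k)}$ (using $\text{deg}\,\alpha_1 \le \kappa_1$). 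Since either $\epsilon_1 = 1$ or $\kappa_1 \ge 1$ in every summand, each unfolding genuinely contributes at least one factor of $2^{-k}$. Iterating the procedure on the inner $E_{l_1, l_2}$'s a total of $N$ times yields the desired $2^{-Nk}$ decay, while at every stage the residual operators remain in a bounded set of elementary operators depending only on $\mathcal{E}$.

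\textbf{Step 2: maximal majorization and vector-valued inequalities.} Combining Step 1 with the doubling inequalities \eqref{doub}, a routine splitting over the annuli $\{\rho(x,y) \sim 2^{-\min(j,j+k)+\ell}\}$ produces the pointwise bound
$$
\bigl| T_{j,k} f(x) \bigr| \lesssim 2^{-N|k|}\, M f(x), \qquad x \in X,
$$
where $M$ is the Hardy-Littlewood maximal operator on the doubling space $(X, \rho, \mu)$. For $p \in (1, \infty), q \in (1, \infty]$, the Fefferman-Stein vector-valued maximal inequality (valid on any doubling metric measure space) gives
$$
\bigl\| \mathcal{T}_k\{f_j\} \bigr\|_{L^p(l^q)} \lesssim 2^{-N|k|} \bigl\| \{M f_j\} \bigr\|_{L^p(l^q)} \lesssim_{p,q} 2^{-N|k|} \bigl\| \{f_j\} \bigr\|_{L^p(l^q)}.
$$
For the $l^q(L^p)$ estimate with $p, q \in [1, \infty]$, the kernel bound of Step 1 together with Schur's test gives $\|T_{j,k}\|_{L^p \to L^p} \lesssim 2^{-N|k|}$ uniformly in $j$, and the desired bound follows diagonally in $j$. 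Taking the supremum over $\mathcal{F}, \mathcal{F}' \subseteq \mathcal{E}$ is harmless because all constants depend only on $\mathcal{E}$.

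\textbf{Main obstacle.} The delicate point is Step 1: one must track how vector field operators move across compositions in each iteration, ensure that the residual elementary operators stay uniformly controlled by $\mathcal{E}$, and, crucially, verify that the gain per iteration is genuinely $2^{-k}$ rather than $2^{-(j+k)}$ (the latter would be vacuous for $j$ small relative to $k$). This is the step that leans most heavily on the precise recursive structure of Definition \ref{elementary}; the adjoint step for $k < 0$ and the subsequent maximal-function reduction are comparatively routine.
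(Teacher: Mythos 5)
The paper does not prove Lemma \ref{gain} — it is imported verbatim from Street's monograph \cite{BookII}, where the argument is precisely the one you sketch: extract a $2^{-|k|}$ gain per unfolding of the recursive clause in Definition \ref{elementary} (via the arithmetic $-(\epsilon_1+\kappa_1)(j+k)+j\,\mathrm{deg}\,\alpha_1\le -k$ together with the adjoint trick for $k<0$), producing a pre-elementary kernel bound at the coarser scale $2^{-\min(j,j+k)}$ with decay $2^{-N|k|}$, and then conclude with Fefferman--Stein for the $L^p(\ell^q)$ estimate and Schur plus diagonality in $j$ for the $\ell^q(L^p)$ estimate. Your plan is correct and takes essentially the same route as the cited source.
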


\begin{lemma}[\cite{BookII}]\label{elementaryconverges}
If $\{(E_j, 2^{-j}): j\in \mathbb{N}\}$ is a bounded set of elementary operators, then $\forall g\in C^\infty(X)$, 
$$
\sum_{j\in \mathbb{N}} E_j \,g
$$
converges in $C^\infty(X)$.
\end{lemma}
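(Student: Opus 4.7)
The plan is to reduce convergence in $C^\infty(X)$ to absolute uniform convergence of each derivative. Since $X$ is compact, the Fréchet topology on $C^\infty(X)$ is generated by the seminorms $\|W^\gamma f\|_{L^\infty(X)}$ over ordered multi-indices $\gamma$. It therefore suffices to show that for every such $\gamma$ one has
$$
\sum_{j\in\mathbb{N}} \|W^\gamma E_j g\|_{L^\infty(X)} < \infty,
$$
from which the partial sums are Cauchy in every seminorm and the series converges in $C^\infty(X)$.

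The main quantitative estimate I would establish is: for every ordered multi-index $\gamma$ and every $N\in\mathbb{N}$, there exists $K=K(N,\gamma)\in\mathbb{N}$ with
$$
\|W^\gamma E_j g\|_{L^\infty(X)} \lesssim_{\mathcal{E},\gamma,N} 2^{-jN}\,\|g\|_{C^K(X)},\qquad \forall j\in\mathbb{N}.
$$
Taking $N=2$ gives the summability above. To prove this, I would iterate Definition \ref{elementary} $M$ times: start with $E_j$, decompose it, then decompose the inner operator (which lies in another bounded set of elementary operators $\in \mathcal{G}$), and repeat. After $M$ iterations one gets a finite expansion
$$
E_j = \sum_{\vec l} 2^{-jA_{\vec l}}\, L_{\vec l}\, \widetilde E_{j,\vec l}\, R_{\vec l},
$$
where $\{(\widetilde E_{j,\vec l},2^{-j})\}$ is still a bounded set of elementary operators (hence pre-elementary), $L_{\vec l}$ and $R_{\vec l}$ are smooth compositions of factors $b(x)W^{\alpha}$, of total $W$-degrees $\ell_{\vec l}$ and $r_{\vec l}$ respectively, and the exponent $A_{\vec l}$ is the sum over the $M$ iterations of $\epsilon_1^{(i)}+\kappa_1^{(i)}+\epsilon_2^{(i)}+\kappa_2^{(i)}$, with each $\deg\alpha_a^{(i)}\le\kappa_a^{(i)}$ and $\epsilon_a^{(i)}+\kappa_a^{(i)}\ge 1$.

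To bound each summand, I write $R_{\vec l}\,g$ as a smooth function with $\|R_{\vec l} g\|_{L^\infty}\lesssim \|g\|_{C^{r_{\vec l}}}$, and use the pre-elementary kernel bound (\ref{sym2}) on $\widetilde E_{j,\vec l}$: after applying the differential operator $W^\gamma L_{\vec l}$ on the left variable, the kernel is pointwise bounded by $2^{j(\deg\gamma+\ell_{\vec l})}$ times the standard kernel, whose $L^1_y$ norm at each $x$ is $\lesssim 1$ by the doubling condition (\ref{doub}). This yields $\|W^\gamma L_{\vec l}\widetilde E_{j,\vec l} R_{\vec l}g\|_{L^\infty}\lesssim 2^{j(\deg\gamma+\ell_{\vec l})}\|g\|_{C^{r_{\vec l}}}$. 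Since $\ell_{\vec l}\le \sum_i\kappa_1^{(i)}$, one has
$$
A_{\vec l}-\ell_{\vec l}-\deg\gamma \;\ge\; \sum_{i=1}^M (\epsilon_2^{(i)}+\kappa_2^{(i)}) - \deg\gamma \;\ge\; M-\deg\gamma,
$$
so choosing $M=\deg\gamma+N$ gives the desired bound with $K=\max_{\vec l} r_{\vec l}$.

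The main obstacle is bookkeeping, not analysis: one must verify that the iteration of Definition \ref{elementary} is well-defined (the inner operators at each step form a bounded set of elementary operators in $\mathcal{G}$, so the recursion actually applies), and track how the differential operators accumulate on each side across $M$ iterations. The crucial structural input is that the right-side factor $\epsilon_{2}^{(i)}+\kappa_{2}^{(i)}\ge 1$ at every iteration produces a genuine gain of $2^{-j}$ per step, because those derivatives land harmlessly on $g\in C^\infty(X)$, whereas the left-side gain is exactly cancelled by the $L_{\vec l}$ factors and the outer $W^\gamma$. That asymmetry is what lets one iterate $M$ times to absorb any fixed $\deg\gamma$.
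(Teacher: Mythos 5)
The paper states this lemma without proof, citing \cite{BookII}, so there is no in-text proof to compare against; I therefore assess your argument on its own merits. Your approach is correct and is exactly the expected one given the paper's machinery: iterate the recursive decomposition in Definition \ref{elementary} $M$ times. The decisive observation, which you identify, is that the right-hand gain $\epsilon_{2,l}+\kappa_{2,l}\ge1$ at each iteration is free of charge (those derivatives fall on the fixed $g\in C^\infty(X)$ and only enlarge the $C^K$-norm of $g$, not the $j$-dependence), whereas the left-hand $\kappa_{1,l}$ is exactly what pays for the factor $2^{j\,\deg}$ incurred when the accumulated differential operators and the outer $W^\gamma$ hit the pre-elementary kernel bound (\ref{sym2}); your estimate $A_{\vec l}-\ell_{\vec l}-\deg\gamma\ge M-\deg\gamma$ is right, and the $L^1_y$ bound on the pre-elementary kernel via (\ref{doub}) closes the loop. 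Two small things worth stating explicitly. First, the reduction in your opening paragraph uses more than compactness: the seminorms $\|W^\gamma f\|_{L^\infty(X)}$ generate the $C^\infty$-topology precisely because of H\"ormander's condition (every smooth vector field is a $C^\infty(X)$-combination of iterated $W$'s), so this should be invoked rather than taken for granted. Second, the index controlling $\|R_{\vec l}g\|_{L^\infty}\lesssim\|g\|_{C^{K}}$ should be the count $|\cdot|$ of $W$-factors appearing in $R_{\vec l}$, not the formal degree $\deg$ (they differ when some $d_i>1$); this is harmless for the conclusion but worth keeping straight. The "well-definedness of the iteration" you flag is actually immediate: the defining property of $\mathcal G$ is precisely that the inner operators of the decomposition again lie in $\mathcal G$, so the recursion applies by fiat once existence and uniqueness of $\mathcal G$ (cited from \cite{BookII}) are granted.
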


\begin{proof}[Proof of Theorem \ref{full-equivalence}]
By Lemma \ref{etaiselementary}, $\{(\phi_j(2^{-2\kappa j} L), 2^{-j}): j\in \mathbb{N}\}$ is a bounded set of elementary operators. Denote 
$$
\mathcal{D} = \big\{(D_j, 2^{-j}):j\in \mathbb{N}\big\} := \big\{\big(\phi_j(2^{-2\kappa j}L), 2^{-j}\big): j\in \mathbb{N}\big\}.
$$
If $f\in NL_t^p(X,(W,d))$, then
$$
\big\|\big\{2^{tj} D_j f\big\}_{j\in \mathbb{N}} \big\|_{L^p(l^q(\mathbb{N}))} <\infty.
$$

On the other hand, let $\mathcal{E} = \{(E, 2^{-j})\}$ be an arbitrary bounded set of elementary operators. Denote $\mathcal{E}' = \mathcal{E} \cup \mathcal{D}$, which is still a bounded set of elementary operators. For every $j<0$, let $D_j =0$. 
By Lemma \ref{elementaryconverges} and by duality, for $f\in C^\infty(X)'$, each $D_jf \in C^\infty(X)$, and
$$
f= \sum_{j\in \mathbb{Z}} D_j f,
$$
where the infinite sum converges in $C^\infty(X)'$. By Lemmas \ref{calderon},
\begin{align*}
&\quad \sup_{\{(E_j, 2^{-j}):j\in \mathbb{N}\} \subseteq \mathcal{E}}\Big\| \big\{2^{jt} E_j f\big\}_{j\in\mathbb{N}}\Big\|_{L^p(l^q(\mathbb{N}))} \\
& = \sup_{\{(E_j, 2^{-j}):j\in \mathbb{N}\} \subseteq \mathcal{E}}\Big\| \Big\{2^{jt} E_j \sum_{k\in \mathbb{Z}} D_{j+k} f\Big\}_{j\in \mathbb{N}} \Big\|_{L^p(l^q(\mathbb{N}))} \\
&= \sup_{\{(E_j, 2^{-j}):j\in \mathbb{N}\} \subseteq \mathcal{E}}\Big\| \Big\{2^{jt} E_j \sum_{\substack{k,l\in \mathbb{Z}\\ |l|\leq 1}} D_{j+k} D_{j+k+l} f\Big\}_{j\in \mathbb{N}} \Big\|_{L^p(l^q(\mathbb{N}))} \\
& \leq \sup_{\{(E_j, 2^{-j}):j\in \mathbb{N}\} \subseteq \mathcal{E}}\sum_{\substack{k,l\in \mathbb{Z}\\ |l|\leq 1}} 2^{-t(k+l)} \Big\| \Big\{ E_j  D_{j+k} 2^{t(j+k+l)} D_{j+k+l} f\Big\}_{j\in \mathbb{N}} \Big\|_{L^p(l^q(\mathbb{N}))},
\end{align*}
where the last inequality follows from the triangle inequality and Fatou's lemma. Then by lemma \ref{gain},
\begin{align*}
\sup_{\{(E_j, 2^{-j}):j\in \mathbb{N}\} \subseteq \mathcal{E}}\Big\| \big\{2^{jt} E_j f\big\}_{j\in\mathbb{N}}\Big\|_{L^p(l^q(\mathbb{N}))}& \lesssim_{\mathcal{E}', t, p,q} \sum_{\substack{k,l\in \mathbb{Z}\\ |l|\leq 1}} 2^{-t(k+l)-(|t|+1)|k|} \Big\| \Big\{  2^{t(j+k+l)} D_{j+k+l} f\Big\}_{j\in \mathbb{N}} \Big\|_{L^p(l^q(\mathbb{N}))}\\
& \lesssim_t \Big\| \Big\{  2^{t(j+k+l)} D_{j+k+l} f\Big\}_{j\in \mathbb{Z}} \Big\|_{L^p(l^q(\mathbb{N}))} =  \Big\| \big\{2^{jt} D_j f\big\}_{j\in\mathbb{N}}\Big\|_{L^p(l^q(\mathbb{N}))}.
\end{align*}
Therefore for $t\in \mathbb{R}, p\in (1,\infty), q\in (1, \infty]$,
$$
\mathcal{F}_{p,q}^t(X, (W, d)) = \mathcal{F}_{p,q}^t(X, L).
$$

The proof for non-isotropic Besov spaces is similar.
\end{proof}

From now on, we denote $NL^p_t(X,(W,d))=NL^p_t(X,L)$ as $NL^p_t(X)$.

\section{Rough multipliers}\label{sectionbound}

In this section we quantify the amount of smoothness of $m$ required for the boundedness of the multiplier $m(L)$ on each $NL^p_t(X)$ space (Theorems \ref{unconditional} and \ref{Sobolev-bound}). We will combine the result of Duong, Ouhabaz, and Sikora \cite{DUONG} and the framework of Street \cite{BookII}, together with an interpolation result in Appendix \ref{B}.

\begin{proposition}[Duong, Ouhabaz, and Sikora \cite{DUONG}]\label{L^p-bound}
We have $m(L)$ is bounded on $L^p(X)$ for $1<p<\infty$ if $m\in L^\infty_{Q/2+\epsilon, \text{sloc}}$ for some $\epsilon>0$. Moreover,  
$$
\big\|m(L)\big\|_{L^p(X) \to L^p(X)} \lesssim_{p,\epsilon} |m(0)| + \|m\|_{L^\infty_{\frac{Q}{2}+\epsilon, \text{sloc}}}.
$$
\end{proposition}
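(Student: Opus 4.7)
The plan is to invoke the theorem of Duong, Ouhabaz, and Sikora \cite{DUONG} directly, after verifying that our setting satisfies its hypotheses. Their theorem establishes the $L^p$ boundedness of $m(L)$ for a non-negative, self-adjoint $L$ on a doubling metric measure space, provided $L$ admits a Gaussian-type short-time heat kernel bound and $m \in L^\infty_{Q/2+\epsilon,\mathrm{sloc}}$, so the proposition reduces to checking these two structural ingredients in our setting.

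First, I would verify the two inputs. The doubling condition on $(X,\rho,\mu)$ is precisely (\ref{doubling}), due to Nagel-Stein-Wainger \cite{NSW85}. The Gaussian heat kernel bound is exactly the one recalled at the start of Section \ref{keylemma}: the maximal subellipticity assumption (\ref{maxsub}) is equivalent, by Chapter 8 of Street \cite{BookII}, to
$$
|e^{-tL}(x,y)| \lesssim \mu(B(x,\rho(x,y)+t^{1/2\kappa}))^{-1} \exp\bigl(-c(\rho(x,y)^{2\kappa}/t)^{1/(2\kappa-1)}\bigr)
$$
for $0 < t \leq 1$, together with derivative estimates. This is precisely the type of Gaussian bound that the framework of \cite{DUONG} requires (the superexponent $1/(2\kappa-1) > 0$ is enough to run their argument, since only integrability against polynomial weights is used).

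Second, for completeness I would sketch how the DOS machinery plays out in our notation. Write $m(\lambda) = m(0)\chi_{\{0\}}(\lambda) + \sum_{j \geq 1} m_j(2^{-2\kappa j}\lambda)$ via the dyadic partition (\ref{phi}), as in (\ref{mj}). The zero-frequency piece $m(0)E_L(0)$ has a smooth kernel by Lemma \ref{one-pre-elementary} and is bounded on every $L^p$, contributing the $|m(0)|$ term. For each dyadic piece $m_j(2^{-2\kappa j}L)$, the weighted Plancherel estimate of Lemma \ref{plancherel-decay} yields, for any $a \in \mathbb{N}$,
$$
\int |m_j(2^{-2\kappa j}L)(x,y)|^2 (1+2^j\rho(x,y))^{a} \, d\mu(y) \lesssim_{a,\epsilon} \|m\|_{L^\infty_{Q/2+\epsilon,\mathrm{sloc}}}^2 \, \mu(B(x,2^{-j}))^{-1},
$$
after absorbing the dyadic rescaling $\|\phi(\cdot)m(2^{2\kappa j}\cdot)\|_{L^\infty_{a/2+\epsilon/2}} \leq \|m\|_{L^\infty_{a/2+\epsilon/2,\mathrm{sloc}}}$ for $a/2+\epsilon/2 = Q/2+\epsilon$. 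Taking $a > Q$ and using Cauchy-Schwarz with the doubling bound (\ref{doub}) converts this into the off-diagonal $L^1$ Hörmander-type condition
$$
\sup_{y \in X}\sup_{r>0} \int_{\rho(x,y) \geq 2r} |T_r(x,y) - T_r(x,y')| \, d\mu(x) < \infty
$$
needed for the Calderón-Zygmund decomposition argument of \cite{DUONG} to give weak-type $(1,1)$; interpolation with the trivial $L^2$ bound $\|m(L)\|_{2\to 2} \leq \|m\|_{L^\infty}$ and duality then yield the full $L^p$ range.

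The main obstacle, if one were to reprove this from scratch, would be the Calderón-Zygmund decomposition itself, which must carefully balance the near-diagonal truncation scale against the dyadic frequency scale $2^{-j}$ of each $m_j$ and use the weighted $L^2$ estimate as a substitute for pointwise kernel bounds (which are not available with only $Q/2+\epsilon$ derivatives). Since this is carried out in full generality in \cite{DUONG} under exactly our structural hypotheses, the cleanest route is to cite it after the verification above.
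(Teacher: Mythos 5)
Your proposal takes essentially the same route as the paper: the paper also simply cites Duong--Ouhabaz--Sikora for this proposition, having observed at the start of Section 3 (via Street, Chapter 8) that maximal subellipticity is equivalent to the short-time Gaussian heat kernel bound, which together with the Nagel--Stein--Wainger doubling property supplies exactly the two structural hypotheses that [DUONG] needs. Your sketch of the DOS machinery is consistent (in particular your bookkeeping $a=Q+\epsilon$ with the lemma's internal $\epsilon/2$ matches Lemma \ref{plancherel-decay}), though note that as a pointwise identity on $[0,\infty)$ one has $m_0(\lambda)=m(\lambda)\phi_0(\lambda)$ rather than $m(0)\chi_{\{0\}}(\lambda)$; these agree only on the spectrum of $L$ inside $[0,\lambda_0/2]$, which is $\{0\}$, so the claimed decomposition holds at the operator level, $m_0(L)=m(0)E_L(0)$, which is what is actually used.
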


\begin{proposition}[Street \cite{BookII}]\label{nisbound}
Let $t\in \mathbb{R}$. The singular integral operator of order $t$ is bounded from $NL^p_{\tau}(X) \to NL^p_{\tau-t}(X)$, for all $\tau\in \mathbb{R}$ and $1<p<\infty$, with norm only depending on $t, \tau, p$. 
\end{proposition}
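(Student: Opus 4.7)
The plan is to combine the characterization $T=\sum_{l\in\mathbb{N}}2^{lt}E_l$ of a singular integral of order $t$ (Proposition \ref{sum}) with the elementary-operator description of $NL^p_\tau(X)$ given by $\|f\|_{NL^p_\tau(X)}\approx\|\{2^{j\tau}D_jf\}_j\|_{L^p(l^2)}$, where $D_j=\phi_j(2^{-2\kappa j}L)$ is itself elementary (Theorem \ref{full-equivalence} and Lemma \ref{etaiselementary}). Taking a finite union, I may assume $\{(D_j,2^{-j})\}$ and $\{(E_l,2^{-l})\}$ lie inside a single bounded set $\mathcal{E}$ of elementary operators. By (\ref{partition}) and the spectral theorem, $\sum_m D_m=I$ on $L^p(X)$, so the Calder\'on reproducing formula $f=\sum_m D_mf$, combined with the index shifts $l=j+k$, $m=j+n$ (with $E_l,D_m:=0$ for negative indices), yields
\begin{equation*}
2^{j(\tau-t)}D_jTf \;=\; \sum_{k,n}2^{kt-n\tau}\,D_jE_{j+k}\bigl[2^{(j+n)\tau}D_{j+n}f\bigr].
\end{equation*}

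Setting $v_n:=\{2^{(j+n)\tau}D_{j+n}f\}_{j\in\mathbb{N}}$, Theorem \ref{full-equivalence} gives $\|v_n\|_{L^p(l^2)}\lesssim\|f\|_{NL^p_\tau(X)}$ uniformly in $n$. The core of the argument is the two-parameter almost-orthogonality estimate
\begin{equation*}
\bigl\|\{D_jE_{j+k}(v_n)_j\}_j\bigr\|_{L^p(l^2)}\;\lesssim_N\;2^{-N(|k|+|n-k|)}\,\|v_n\|_{L^p(l^2)},\qquad\forall N\in\mathbb{N}.
\end{equation*}
The factor $2^{-N|k|}$ is a direct application of Lemma \ref{gain} to the pair $(D_j,E_{j+k})\subseteq\mathcal{E}$. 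The factor $2^{-N|n-k|}$ comes from the $D_{j+n}$ already hidden inside $(v_n)_j$: with $l=j+k$, applying Lemma \ref{gain} to the pair $(E_l,D_{l+(n-k)})$ produces the missing gain. To secure both at once, I would split into the cases $|k|\ge|n-k|$ and $|k|<|n-k|$, apply Lemma \ref{gain} to whichever pair carries the larger separation, and control the remaining single elementary factor via its $L^p(l^2)$-boundedness. Then the triangle inequality and the substitution $m=n-k$ factor the double sum $\sum_{k,n}2^{kt-n\tau}\cdot 2^{-N(|k|+|n-k|)}$ into the product of two geometric series $\sum_k 2^{k(t-\tau)-N|k|/2}$ and $\sum_m 2^{-m\tau-N|m|/2}$, both convergent once $N>2(|t|+|\tau|)$, yielding $\|Tf\|_{NL^p_{\tau-t}(X)}\lesssim_{t,\tau,p}\|f\|_{NL^p_\tau(X)}$.

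\textbf{Main obstacle.} The delicate step is the joint two-parameter decay: Lemma \ref{gain} is a single-shift statement for products of two elementary operators, whereas $D_jE_{j+k}D_{j+n}$ is a product of three with two independent shifts, and naively one gain alone leaves either the $k$-sum or the $n$-sum divergent. The case-split above extracts the correct joint decay, but requires care in verifying that the ``demoted'' factor remains uniformly bounded on $L^p(l^2)$, which rests on a Fefferman--Stein-type vector-valued estimate for bounded sets of elementary operators. A cleaner structural route would be to establish, once and for all in Street's framework, that the product $E_lD_m$ of two elementary operators is itself a bounded family of elementary operators at scale $2^{-\min(l,m)}$ with an extra gain $2^{-N|l-m|}$; inserting this in place of $E_{j+k}D_{j+n}$ reduces the argument to a single application of Lemma \ref{gain} and eliminates the case analysis.
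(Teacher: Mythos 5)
Proposition~\ref{nisbound} is quoted from Street \cite{BookII}; the paper offers no proof, so there is no argument in the paper to compare yours against. Your overall structure---reduce $T$ to $\sum_l 2^{lt}E_l$ via Proposition~\ref{sum}, pass to the $L^p(l^2)$ norm of $\{2^{j\tau}D_jf\}_j$, and attack the resulting three-factor product $D_jE_{j+k}D_{j+n}$ with almost-orthogonality---is the right kind of argument, and it parallels the paper's proof of Theorem~\ref{full-equivalence} (which has only two factors and therefore needs Lemma~\ref{gain} exactly once).

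The gap is in the case split, which does not close with the tools the paper provides. For $|k|<|n-k|$ you wish to apply Lemma~\ref{gain} to the pair $(E_{j+k},D_{j+n})$ and ``demote'' the outer $D_j$ to a bounded vector-valued multiplier. But Lemma~\ref{gain} controls maps $\{g_j\}\mapsto\{F_jF'_{j+k}g_j\}$ acting on a vector $\{g_j\}\in L^p(l^2)$; after peeling off $D_j$, what remains is $\{E_{j+k}D_{j+n}f\}_j$, which after re-indexing $j'=j+k$ reads $\{E_{j'}D_{j'+(n-k)}f\}_{j'}$---a two-factor product applied to the \emph{constant} sequence $\{f\}_{j'}$, whose $L^p(l^2)$ norm is infinite. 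No choice of which factor to demote puts the desired $|n-k|$-gain into a legitimate instance of Lemma~\ref{gain}. The heuristic you give is sound (both $D$'s must spectrally align with the intervening $E$), but the single-shift Lemma~\ref{gain} alone cannot extract the two-parameter decay from a product of three with two independent shifts.

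The ``cleaner structural route'' you mention is indeed the right fix and is available in Street's framework, though this paper does not state it: the composition $E_lE'_m$ of elementary operators at scales $2^{-l},2^{-m}$ is $2^{-N|l-m|}$ times an elementary operator at scale $2^{-\min(l,m)}$, uniformly. Once you have this compositional lemma, the reproducing formula $f=\sum_mD_mf$ and the double sum over $(k,n)$ become unnecessary. Write $2^{j(\tau-t)}D_jTf=\sum_k2^{kt+j\tau}D_jE_{j+k}f$, use the compositional lemma to replace $D_jE_{j+k}$ by $2^{-N|k|}\widetilde E^{(k)}_{\min(j,j+k)}$ with $\widetilde E^{(k)}$ ranging over a single bounded set of elementary operators, and observe directly from Definition~\ref{triebel} (after a shift of index when $k<0$) that
\begin{equation*}
\big\|\{2^{j\tau}\widetilde E^{(k)}_{\min(j,j+k)}f\}_{j\in\mathbb{N}}\big\|_{L^p(l^2)}\lesssim 2^{|k||\tau|}\|f\|_{NL^p_\tau(X)}.
\end{equation*}
Summing $\sum_k 2^{kt-N|k|+|k||\tau|}$ for $N>|t|+|\tau|$ then yields the claim with a single sum, no reproducing formula for $f$, and no case analysis.
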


\begin{lemma}\label{resolution}
Let $t,\tau\in \mathbb{R}$ and $1<p<\infty$. We have
$$
\big\|E_L(0) \big\|_{NL^p_{2\kappa \tau}(X) \to NL^p_{2\kappa \tau - 2\kappa t}(X)} \lesssim_{t, \tau,p} 1.
$$
\end{lemma}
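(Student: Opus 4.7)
The plan is to show that $E_L(0)$ is a singular integral operator of order $2\kappa t$ for every $t\in\mathbb{R}$; granting this, the claim follows immediately from Proposition \ref{nisbound} with source order $2\kappa\tau$, which gives target order $2\kappa\tau-2\kappa t$ and a constant depending only on $t,\tau,p$.

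The key observation is that by Lemma \ref{compact2} the spectrum of $L$ is $\{0\}\cup\sigma$ with $\sigma\subseteq[\lambda_0,\infty)$, so the indicator $\mathbb{1}_{\{0\}}$ is a bounded Borel measurable function supported in $[-\lambda_0/2,\lambda_0/2]$ and satisfies $\mathbb{1}_{\{0\}}(L)=E_L(0)$ via the spectral theorem. I would therefore set $\eta_0:=\mathbb{1}_{\{0\}}$ and $\eta_j\equiv 0$ for $j>0$; these trivially meet the hypotheses of Lemma \ref{etaiselementary}, so
$$
\big\{\big(\eta_j(2^{-2\kappa j}L),2^{-j}\big):j\in\mathbb{N}\big\}=\{(E_L(0),1)\}\cup\{(0,2^{-j}):j\geq 1\}
$$
is a bounded set of elementary operators.

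Now for any fixed $t\in\mathbb{R}$, set $E_0:=E_L(0)$ and $E_j:=0$ for $j\geq 1$. Then $\sum_{j\in\mathbb{N}}2^{2\kappa t j}E_j=E_L(0)$, and $\{(E_j,2^{-j})\}_{j\in\mathbb{N}}$ is a bounded set of elementary operators (being a subset of the one above), so Proposition \ref{sum} identifies $E_L(0)$ as a singular integral operator of order $2\kappa t$. Applying Proposition \ref{nisbound} then yields the desired estimate. There is no real obstacle: the whole argument rests on the spectral gap at $\lambda_0$, which allows $E_L(0)$ to be realized as a spectral multiplier whose dyadic scale may be chosen freely within the elementary-operator framework, so it inherits every order of regularity at once.
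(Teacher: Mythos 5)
Your proof is correct and follows essentially the same path as the paper's: write $E_L(0)=\sum_{j\in\mathbb{N}}2^{2\kappa t j}E_j$ with $E_0=E_L(0)$ and $E_j=0$ for $j>0$, note this is a bounded set of elementary operators, invoke Proposition \ref{sum} to conclude $E_L(0)$ is a singular integral operator of order $2\kappa t$ for every $t$, and finish with Proposition \ref{nisbound}. The only cosmetic difference is that you route the elementary-operator claim through Lemma \ref{etaiselementary} with $\eta_0=\mathbb{1}_{\{0\}}$, whereas the paper cites Lemma \ref{one-pre-elementary} directly; these are equivalent since the proof of Lemma \ref{etaiselementary} itself relies on Lemma \ref{one-pre-elementary} to handle the $j=0$ case.
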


\begin{proof}
By Lemma \ref{one-pre-elementary} and Proposition \ref{sum},
$$
E_L(0) = 2^{0\cdot t} E_L(0) + \sum_{j>0} 2^{jt} \cdot 0
$$
is a singular integral operator of order $t\in \mathbb{R}$. Then by Proposition \ref{nisbound}, the conclusion follows.
\end{proof}

\begin{proof}[Proof of Theorem \ref{Sobolev-bound}]
Recall $m_0(\lambda)$ is defined in (\ref{mj}). $m(\lambda)-m_0(\lambda)$ is supported on $[\lambda_0/4, \infty)$. Fix $\sigma\in C_c^\infty[\lambda_0/8, \infty)$, and $\sigma \equiv 1$ on $[\lambda_0/4, \infty)$. We have $\forall t\in \mathbb{R}, \forall k\in \mathbb{N}$, 
$$
\sup_{\lambda>0} \lambda^{-t} (\lambda \partial_\lambda)^k((1+\lambda)^t\sigma(\lambda))<\infty.
$$
By Theorem \ref{NIS}, $(I+L)^t\sigma(L)$ is a singular integral operator of order $2\kappa t$. By Propositions \ref{L^p-bound} and \ref{nisbound},
\begin{align*}
&\quad \|m(L)-m_0(L)\|_{NL^p_{2\kappa \tau}(X) \to NL^p_{2\kappa \tau- 2\kappa t}(X)} \\
& \leq \big\| (I+L)^{t-\tau} \sigma(L) \big\|_{L^p(X) \to NL^p_{2\kappa \tau - 2\kappa t}(X)} \cdot \big\|(I+L)^{-t}(m(L) - m_0(L))\big\|_{L^p(X) \to L^p(X)}  \\
&\qquad \qquad \qquad \qquad \qquad \qquad \qquad \qquad \qquad \cdot \big\|(I+L)^\tau \sigma(L)\big\|_{NL^p_{2\kappa \tau}(X) \to L^p(X)}\\
& \lesssim_{t,\tau,p} \big\|(I+L)^{-t}(m(L) - m_0(L))\big\|_{L^p(X) \to L^p(X)} \\
& \lesssim_{p,\epsilon,r} \big\|(1+\lambda)^{-t} (m(\lambda)-m_0(\lambda))\big\|_{L^r_{\frac{Q}{2}+\frac{\epsilon}{2}, \text{sloc}}}  \lesssim_{\epsilon} \Big\|\frac{ m(\lambda)}{\Lambda^t}\Big\|_{L^r_{\frac{Q}{2}+\epsilon, \text{sloc}}}.
\end{align*}

By Lemma \ref{resolution},
\begin{equation}\label{m0}
\big\|m_0(L) \big\|_{NL^p_{2\kappa \tau}(X) \to NL^p_{2\kappa \tau - 2\kappa t}(X)} = |m(0)| \big\|E_L(0) \big\|_{NL^p_{2\kappa \tau}(X) \to NL^p_{2\kappa \tau - 2\kappa t}(X)} \lesssim_{t, \tau,p} |m(0)|.
\end{equation}
Hence
$$
\big\|m(L) \big\|_{NL^p_{2\kappa \tau}(X) \to NL^p_{2\kappa \tau - 2\kappa t}(X)} \lesssim_{t, \tau, p,\epsilon,r} |m(0)| + \Big\|\frac{ m(\lambda)}{\Lambda^t}\Big\|_{L^r_{\frac{Q}{2}+\epsilon, \text{sloc}}}.
$$
\end{proof}

\begin{remark}
As mentioned in Remark \ref{1.8}, we cannot change $m$ on a set of measure $0$, and we use a continuous version of $m|_{(0,\infty)}$. The reason we cannot arbitrarily change the value of $m(0)$ is simply (\ref{m0}). As for $m(\lambda)$ for $\lambda \geq \lambda_0$, the reason we cannot change $m(\lambda)$ is included in the proof of Proposition \ref{L^p-bound} due to Duong, Ouhabaz, and Sikora (see also their key estimates in Lemma \ref{plancherel-decay}), where they used the following Fourier inversion formula to express $m_j$ defined in (\ref{mj}): 
$$
m_j(2^{-2\kappa j} \lambda) = \frac{1}{2\pi} \int \mathcal{F}\big(m_j(\lambda)e^\lambda\big)(\tau) e^{(i\tau-1)2^{-2\kappa j\lambda}}\,d\tau.
$$
The integrand on the right hand side is integrable, and thus this version of $m_j$ is continuous.
\end{remark}

We now derive Theorem \ref{unconditional} from Theorem \ref{Sobolev-bound} by an interpolation result: Proposition \ref{interpolation}.

\begin{proof}[Proof of Theorem \ref{unconditional}]
We first assume
$$
1<p<2, \quad Q\Big|\frac{1}{p}-\frac{1}{2}\Big| < s \leq \frac{Q}{2}, \quad 0 \leq \frac{1}{r} < \min \Big\{ 1, s-Q\Big|\frac{1}{p}-\frac{1}{2}\Big|\Big\}.
$$
Let 
\begin{align*}
&s_1= \frac{2}{3}\Big(s-Q\Big|\frac{1}{p}-\frac{1}{2}\Big|\Big) + \frac{1}{3}\cdot\frac{1}{r}>0, \quad s_0 = \frac{Q}{2} + \frac{1}{3}\Big(s-Q\Big|\frac{1}{p}-\frac{1}{2}\Big|\Big) + \frac{2}{3}\cdot\frac{1}{r}>\frac{Q}{2},\\
&\frac{1}{p_1} = \frac{1}{2}, \quad \frac{1}{p_1}<\frac{1}{p_0}=\frac{1}{p_1} + \frac{s_0-s_1}{Q} = 1- \frac{1}{3Q} \Big( s-Q\Big|\frac{1}{p}-\frac{1}{2}\Big| -\frac{1}{r}\Big)<1.    
\end{align*}
We have
\begin{align*}
Q\Big(\frac{1}{p} -\frac{1}{p_1}\Big) &= s- \Big( s-Q\Big| \frac{1}{p}-\frac{1}{2}\Big| \Big) < s-\Big[\frac{1}{3}\Big(s-Q\Big| \frac{1}{p} -\frac{1}{2}\Big|\Big) - \frac{1}{3}\cdot\frac{1}{r}- \Big(\frac{Q}{2}-s
\Big) \Big]   \\
&=s_0-s_1 =Q\Big(\frac{1}{p_0}-\frac{1}{p_1}\Big).
\end{align*}
Thus $\frac{1}{p_1}<\frac{1}{p}< \frac{1}{p_0}$, and $\exists 1<\theta<1$ such that 
$$
\frac{1}{p} = \frac{1-\theta}{p_0} +  \frac{\theta}{p_1}. 
$$
We have
$$
s- \big((1-\theta) s_0 + \theta s_1\big) =s-\Big(s_1+Q\Big(\frac{1}{p}-\frac{1}{p_1}\Big) \Big) = \frac{1}{3}\Big( s-Q\Big| \frac{1}{p}-\frac{1}{2}\Big| - \frac{1}{r}\Big)>0.
$$
Note $s_0, s_1>\frac{1}{r}$. By the Sobolev embedding theorem, since $p_1=2$,
$$
\big\|m(L)-m_0(L)\big\|_{L^{p_1}(X) \to L^{p_1}(X)} \leq \|m\|_{L^\infty(\mathbb{R})} \approx \|m\|_{L^\infty_{0, \text{sloc}}} \lesssim_{s_1, r} \|m\|_{L^{r}_{s_1, \text{sloc}}}.
$$
By Theorem \ref{Sobolev-bound} ($t, \tau=0$ case) and the Sobolev embedding theorem, by letting $\epsilon =\frac{s_0 - \frac{Q}{2}-\frac{1}{r}}{2}>0$, 
$$
\big\|m(L)-m_0(L)\big\|_{L^{p_0}(X) \to L^{p_0}(X)} \lesssim_{\epsilon,p_0} \|m\|_{L^\infty_{\frac{Q}{2}+\epsilon, \text{sloc}}} \lesssim_{r,s_0, \epsilon} \|m\|_{L^{r}_{s_0, \text{sloc}}}.
$$
By Proposition \ref{interpolation}, 
$$
\big\|m(L)-m_0(L)\big\|_{L^{p}(X) \to L^{p}(X)} \lesssim_{p,s,r} \|m\|_{L^{r}_{s, \text{sloc}}}.
$$

By duality, we can extend from $1<p<2$ to $1<p<\infty$. For $t,\tau\in \mathbb{R}$, by a similar argument as in Proof of Theorem \ref{Sobolev-bound},
$$
\big\|m(L) \big\|_{NL^p_{2\kappa \tau}(X) \to NL^p_{2\kappa \tau-2\kappa t}(X)} \lesssim_{t, \tau, p, s,r} |m(0)| + \Big\| \frac{m(\lambda)}{\Lambda^t} \Big\|_{L^r_{s, \text{sloc}}},
$$
for  
$$
s > Q\Big|\frac{1}{p}-\frac{1}{2}\Big|, \quad 0\leq \frac{1}{r} < \min \Big\{1, s-Q\Big|\frac{1}{p}-\frac{1}{2}\Big| \Big\}.
$$
\end{proof}

\appendix

\section{An interpolation result}\label{B}
The Calder\'on-Torchinsky multiplier theorem \cite{torchinsky} (see also Chapter 5 of Grafakos \cite{grafakosbook}) can be adapted into the following interpolation result. Recall that the smooth bump functions $\phi_j$ are defined in (\ref{phi}), and that $\lambda_0$ is the smallest nonzero spectrum of $L$.

\begin{proposition}\label{interpolation}
Let $1<r_0, r_1\leq \infty$, $1< p_0, p_1<\infty, 0<s_0, s_1<\infty, r_0s_0>1, r_1s_1>1$. Suppose for every $m$ on $[0,\infty)$ supported away from $0$,
$$
\big\|m(L)\big\|_{L^{p_0}(X) \to L^{p_0}(X)} \lesssim \|m\|_{L^{r_0}_{s_0, \text{sloc}}}, \quad \big\|m(L)\big\|_{L^{p_1}(X) \to L^{p_1}(X)} \lesssim \|m\|_{L^{r_1}_{s_1, \text{sloc}}}.
$$
For $0<\theta<1$, let
$$
\frac{1}{p} = \frac{1-\theta}{p_0} + \frac{\theta}{p_1}, \quad \frac{1}{r} = \frac{1-\theta}{r_0} + \frac{\theta}{r_1}, \quad s> (1-\theta)s_0 + \theta s_1.
$$
Then for every $m$ on $[0,\infty)$ supported away from $0$,
$$
\big\|m(L)\big\|_{L^p(X) \to L^p(X)} \lesssim_{\theta,s_0, s_1,s} \|m\|_{L^r_{s, \text{sloc}}}.
$$
\end{proposition}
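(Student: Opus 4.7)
The plan is to follow a Calder\'on--Torchinsky-type strategy: apply Stein's complex interpolation theorem to an analytic family of multipliers built from $m$, and combine this with the dyadic spectral decomposition (\ref{phi}) already used throughout the paper. First I would reduce to a uniform single-scale estimate. Write $m=\sum_{j\geq 1}m_j$ with $m_j(\lambda)=m(\lambda)\phi(2^{-2\kappa j}\lambda)$, and set $M_j(\lambda):=m(2^{2\kappa j}\lambda)\phi(\lambda)$. Each $M_j$ is supported in a fixed compact set $K\subset(0,\infty)$ and $\|M_j\|_{L^q_t(\mathbb{R})}\lesssim\|m\|_{L^q_{t,\text{sloc}}}$ uniformly in $j$ for $(q,t)\in\{(r_0,s_0),(r_1,s_1),(r,s)\}$. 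It then suffices to prove the single-scale bound $\|m_j(L)\|_{L^p(X)\to L^p(X)}\lesssim \|M_j\|_{L^r_s(\mathbb{R})}$ uniformly in $j$.

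For the single-scale estimate, I would apply Stein's theorem to an analytic family $T_z f:=M_j^{(z)}(2^{-2\kappa j}L)f$ on the strip $\overline{S}=\{0\leq\operatorname{Re} z\leq 1\}$, where $M_j^{(z)}$ is built from $M_j$ so that $M_j^{(\theta)}=M_j$ and, on the boundary lines, the endpoint regularity
$$
\|M_j^{(it)}\|_{L^{r_0}_{s_0}}\lesssim e^{-\delta t^2}(1+|t|)^N\|M_j\|_{L^r_s},\qquad \|M_j^{(1+it)}\|_{L^{r_1}_{s_1}}\lesssim e^{-\delta t^2}(1+|t|)^N\|M_j\|_{L^r_s}
$$
holds. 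Such a family is produced by the Bergh--L\"ofstr\"om complex interpolation of Bessel-potential spaces $[L^{r_0}_{s_0}(\mathbb{R}),L^{r_1}_{s_1}(\mathbb{R})]_\theta=L^r_{(1-\theta)s_0+\theta s_1}(\mathbb{R})$ (valid because $r_is_i>1$); it combines an analytic shift of the Sobolev order via the Bessel multiplier $(1+4\pi^2|\xi|^2)^{(s-s(z))/2}$ with $s(z)=(1-z)s_0+z s_1$, an analytic modulation of the Lebesgue exponent (the standard Stein-type device acting on $|M_j|^{r/r(z)}\operatorname{sgn}(M_j)$), a cutoff by a fixed $\eta\in C_c^\infty(0,\infty)$ with $\eta\equiv 1$ on $K$ to restore compact support, and a factor $e^{\delta z^2}$ for admissibility. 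Combined with the endpoint hypotheses, the boundary bounds yield admissible operator-norm estimates for $T_{it}:L^{p_0}\to L^{p_0}$ and $T_{1+it}:L^{p_1}\to L^{p_1}$, and Stein's theorem at $z=\theta$ gives $\|m_j(L)\|_{L^p\to L^p}\lesssim\|M_j\|_{L^r_s}$.

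To aggregate over $j$, I would use spectral almost-orthogonality: by (\ref{partition}) and the spectral theorem $m_j(L)=m_j(L)\sum_{|k-j|\leq 1}\phi(2^{-2\kappa k}L)$, and an $\ell^2$-valued spectral Littlewood-Paley inequality on $L^p(X)$ for the family $\{\phi(2^{-2\kappa j}\cdot)\}_{j\geq 1}$ (a uniform Mihlin--H\"ormander family, so Theorem~\ref{NIS} and Proposition~\ref{nisbound} apply uniformly and give the vector-valued bound) yields
$$
\|m(L)f\|_{L^p(X)}\lesssim \Big\|\Big(\sum_j|m_j(L)\tilde f_j|^2\Big)^{1/2}\Big\|_{L^p(X)} \lesssim \sup_j\|m_j(L)\|_{L^p\to L^p}\cdot\|f\|_{L^p(X)} \lesssim \|m\|_{L^r_{s,\text{sloc}}}\|f\|_{L^p(X)}.
$$
The main obstacle is twofold: constructing the analytic family so that the Sobolev-order shift and the Lebesgue-exponent shift take place simultaneously (the $r_is_i>1$ hypothesis is used for the Bessel-potential interpolation), and upgrading the scalar single-scale bound at the sharp Sobolev exponent to the vector-valued square-function bound. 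The latter step is where the strict inequality $s>(1-\theta)s_0+\theta s_1$ is consumed, since the Mihlin--H\"ormander control of the square function demands slightly more smoothness than the exact interpolated value.
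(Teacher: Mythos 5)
Your single-scale interpolation is plausible in isolation, but the aggregation step is where the argument breaks. The inequality
$$
\Big\|\Big(\sum_j|m_j(L)\tilde f_j|^2\Big)^{1/2}\Big\|_{L^p(X)} \lesssim \sup_j\|m_j(L)\|_{L^p\to L^p}\cdot\|f\|_{L^p(X)}
$$
does not follow from uniform single-scale bounds. Uniform scalar operator-norm bounds for a family $\{T_j\}_j$ do \emph{not} imply that the diagonal map $\{g_j\}\mapsto\{T_jg_j\}$ is bounded on $L^p(\ell^2)$; the Marcinkiewicz--Zygmund theorem gives this only when the $T_j$ are all the same operator, and for a genuinely $j$-dependent family one needs something like $R$-boundedness. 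The $\ell^2$-valued Littlewood--Paley inequality for $\{\phi(2^{-2\kappa j}L)\}$ lets you pass between $\|f\|_{L^p}$ and $\|(\sum|\tilde f_j|^2)^{1/2}\|_{L^p}$, and the dual form gives the first inequality in your chain, but neither says anything about inserting the $j$-dependent family $\{m_j(L)\}$ in between. This is precisely the step that your interpolation at a fixed scale $j$ cannot supply.

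The paper avoids the issue by interpolating once, globally, instead of scale-by-scale. It assembles all dyadic pieces into a single analytic family of multipliers
$$
m_z(\lambda)=\sum_{j\in\mathbb{N}}\Bigl((I+\Delta)^{\frac{-(1-z)s_0'-zs_1'}{2}}\bigl(|(I+\Delta)^{s/2}m_j|^{(1-z)\frac{r}{r_0}+z\frac{r}{r_1}}\,\text{sgn}\,((I+\Delta)^{s/2}m_j)\bigr)\Bigr)(2^{-2\kappa j}\lambda)\cdot\psi_j(\lambda),
$$
verifies $m_\theta=m$, and proves a boundary estimate on the \emph{multiplier norm} $\|m_z\|_{L^{r_u}_{s_u,\text{sloc}}}\lesssim(1+v^2)\|m\|_{L^r_{s,\text{sloc}}}^{r/r_u}$. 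The endpoint hypotheses are then applied to the whole operator $m_z(L)$ on the boundary lines, and Stein interpolation is used once for this operator-valued analytic family. There is no vector-valued square-function step to supply. Two smaller points: in the paper, the hypothesis $r_is_i>1$ is used to conclude $\|m_z\|_{L^\infty(\mathbb{R})}<\infty$ via Sobolev embedding so that $m_z(L)$ is actually defined, not for a Bergh--L\"ofstr\"om interpolation identity; and the strict gap $s>(1-\theta)s_0+\theta s_1$ is absorbed into the shifted exponents $s_0'>s_0$, $s_1'>s_1$, which make the composite Bessel operator $(I+\Delta)^{(s_u+s_u')/4}(I+\Delta)^{-((1-z)s_0'+zs_1')/2}$ have negative real order, hence bounded on $L^{r_u}$ with only polynomial growth in $\operatorname{Im} z$ — it is not consumed by any Mihlin--H\"ormander control of a square function.
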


For $z= u+iv\in \mathbb{C}$, denote the Bessel potential operator of order $z$ as 
$$
(I+\Delta)^{z/2} f:= \mathcal{F}^{-1}\big( (1+4\pi^2 |\xi|^2)^{z/2}\hat f(\xi) \big), \quad \forall f\in \mathcal{S}(\mathbb{R}),
$$
where $\mathcal{F}^{-1}$ denotes the inverse Fourier transform. For $1<r\leq\infty, 0<s<\infty$,
$$
\|f\|_{L^r_s(\mathbb{R})} =\|(I+\Delta)^{s/2}f\|_{L^r(\mathbb{R})}.
$$
By Lemma 4.2 in Calder\'on and Torchinsky \cite{torchinsky}, if $u>0$, the operator $(I+\Delta)^{-z/2}$ is the convolution with an $L^1$ function, and
$$
\|(I+\Delta)^{-z/2}\|_{L^r(\mathbb{R}) \to L^r(\mathbb{R})}  \lesssim (1+|z|)^2 \frac{1+u}{u} \lesssim_u 1+v^2, \quad \forall 1<r\leq \infty.
$$

\begin{proof}[Proof of Proposition \ref{interpolation}]
Assume $m$ is supported on $[\lambda_0/2, \infty)$ with $\|m\|_{L^r_{s,\text{sloc}}}<\infty$. Let
$$
s_0' = s_0 + s- \big((1-\theta)s_0 + \theta s_1 \big), \quad s_1' = s_1 + s- \big((1-\theta)s_0 + \theta s_1 \big).
$$
Denote
$$
\text{sgn}\,z:= e^{i\text{Arg}\,z}, \quad m_j(\lambda) := m(2^{2\kappa j}\lambda) \phi_j(\lambda), \quad \psi_j(\lambda) := \sum_{\substack{l\in \mathbb{N}\\ |l-j|\leq 1}} \phi_l(2^{-2\kappa l}\lambda), \quad \forall j\in \mathbb{N}.
$$
Denote the strip $S:= \{z\in \mathbb{C}: 0< \text{Re}\,z < 1\}$. For $z\in \bar S$, define
\begin{equation*}
m_z(\lambda) = \sum_{j\in \mathbb{N}} \Big((I+\Delta)^{\frac{-(1-z)s_0'-zs_1'}{2}} \big(|(I+\Delta)^{s/2}m_j|^{(1-z)\frac{r}{r_0}+ z \frac{r}{r_1}} \,\text{sgn}\,((I+\Delta)^{s/2}m_j)\big) \Big)(2^{-2\kappa j}\lambda) \cdot \psi_j(\lambda).
\end{equation*}
Note $m_\theta = m$.

We first show $m_z\in L^\infty$. If $r=\infty$, then $r_0=r_1=\infty$, and
\begin{align*}
&\quad \Big\|(I+\Delta)^{\frac{-(1-z)s_0'-zs_1'}{2}} \big(|(I+\Delta)^{s/2}m_j|^{(1-z)\frac{r}{r_0}+ z \frac{r}{r_1}} \,\text{sgn}\,((I+\Delta)^{s/2}m_j)\big) \Big\|_{L^\infty(\mathbb{R})}\\
&=\Big\|(I+\Delta)^{\frac{-(1-z)s_0'-zs_1'}{2}} \big((I+\Delta)^{s/2}m_j\big) \Big\|_{L^\infty(\mathbb{R})} \lesssim_{z, s_0', s_1'} \big\|(I+\Delta)^{s/2}m_j\big\|_{L^\infty(\mathbb{R})} \leq \|m\|_{L^\infty_{s, \text{sloc}}}<\infty.
\end{align*}
If $1<r<\infty$, for $z= u+ iv\in \bar S$, denote 
$$
\frac{1}{r_u}: = \frac{1-u}{r_0} + \frac{u}{r_1}, \quad s_u: = (1-u)s_0 + u s_1, \quad s_u': = (1-u)s_0' + u s_1'.
$$
By the Sobolev embedding theorem,
\begin{align*}
&\quad \Big\|(I+\Delta)^{\frac{-(1-z)s_0'-zs_1'}{2}} \big(|(I+\Delta)^{s/2}m_j|^{(1-z)\frac{r}{r_0}+ z \frac{r}{r_1}} \,\text{sgn}\,((I+\Delta)^{s/2}m_j)\big) \Big\|_{L^\infty(\mathbb{R})}\\
&\lesssim_{s_u, r_u} \Big\|(I+\Delta)^{\frac{-(1-z)s_0'-zs_1'}{2}} \big(|(I+\Delta)^{s/2}m_j|^{(1-z)\frac{r}{r_0}+ z \frac{r}{r_1}} \,\text{sgn}\,((I+\Delta)^{s/2}m_j)\big) \Big\|_{L^{r_u}_{s_u}(\mathbb{R})}\\
&\lesssim_{z, s_0, s_0', s_1, s_1'} \Big\||(I+\Delta)^{s/2}m_j|^{(1-z)\frac{r}{r_0}+ z \frac{r}{r_1}} \,\text{sgn}\,((I+\Delta)^{s/2}m_j)\Big\|_{L^{r_u}(\mathbb{R})} =\big\|(I+\Delta)^{s/2}m_j\big\|_{L^r(\mathbb{R})}^{\frac{r}{r_u}} \leq \|m\|_{L^r_{s, \text{sloc}}}^{\frac{r}{r_u}}<\infty.
\end{align*}
Hence $\|m_z\|_{L^\infty(\mathbb{R})}<\infty$.

By the definition of $m_z(\lambda)$, $m_{z}(2^{2\kappa k} \lambda) \phi(\lambda)$ contains at most five nonzero terms for any $k\in \mathbb{Z}$, and $m_z(2^{2\kappa k} \lambda)\phi(\lambda)\equiv 0$ for sufficiently small $k\in \mathbb{Z}$. Thus to bound $\big\| m_{z}(2^{2\kappa k} \lambda) \phi(\lambda) \big\|_{L^{r_u}_{s_u}(\mathbb{R})}$, it suffices to bound the following:
\begin{equation}\label{last}
\begin{aligned}
&\quad  \Big\|(I+\Delta)^{\frac{-(1-z)s_0'-z s_1'}{2}}\big(|(I+\Delta)^{s/2}m_j|^{(1-z)\frac{r}{r_0}+ z \frac{r}{r_1}} \,\text{sgn}\,((I+\Delta)^{s/2}m_j)\big)\cdot \phi \Big \|_{L^{r_u}_{s_u}(\mathbb{R})}\\
&\lesssim_{s_0, s_1, s, \theta} \Big\|(I+\Delta)^{\frac{-(1-z)s_0'-z s_1'}{2}}\big(|(I+\Delta)^{s/2}m_j|^{(1-z)\frac{r}{r_0}+ z \frac{r}{r_1}} \,\text{sgn}\,((I+\Delta)^{s/2}m_j)\big) \Big \|_{L^{r_u}_{\frac{s_u+s_u'}{2}}(\mathbb{R})}\\
&=\Big\|(I+\Delta)^{\frac{s_u+s_u'}{4}} (I+\Delta)^{\frac{-(1-z)s_0'-z s_1'}{2}}\big(|(I+\Delta)^{s/2}m_j|^{(1-z)\frac{r}{r_0}+ z \frac{r}{r_1}} \,\text{sgn}\,((I+\Delta)^{s/2}m_j)\big)\Big\|_{L^{r_u}(\mathbb{R})} \\
&\lesssim_{\theta, s_0,s_1, s} (1+v^2) \cdot  \big\||(I+\Delta)^{s/2}m_j|^{(1-z)\frac{r}{r_0}+ z \frac{r}{r_1}} \,\text{sgn}\,((I+\Delta)^{s/2}m_j)\big\|_{L^{r_u}(\mathbb{R})} \leq (1+v^2) \|m\|_{L^r_{s, \text{sloc}}}^{\frac{r}{r_u}}. 
\end{aligned}
\end{equation}
Therefore 
$$
\|m_{z}\|_{L^{r_u}_{s_u, \text{sloc}}} \lesssim_{\theta, s_0,s_1,s} (1+v^2) \|m\|_{L^r_{s, \text{sloc}}}^{\frac{r}{r_u}} <\infty.
$$
By the Sobolev embedding theorem, $m_z$ is continuous on $(0,\infty)$. Hence $m_z(L)$ is defined.

Fix arbitrary $f,g\in C^\infty(X)$, we need to show
\begin{align*}
F(z):&= \langle m_z(L)f, g \rangle_{L^2(X)} =  \int_0^\infty m_z(\lambda)\,d\big\langle E_L(\lambda) f, g\big\rangle 
\end{align*}
is holomorphic on $S$. Note $d\langle E_L(\lambda) f, g\rangle$ is a Borel measure. For each $j, N\in \mathbb{N}$, define
$$
d\mu_{N,j}(\lambda) = \big(1-\phi_0(2^{2\kappa j+2\kappa N} \lambda) \big)\cdot \phi_0(2^{2\kappa j-2\kappa N} \lambda) \cdot \Big(N\phi_0\big(N\cdot \big) * d\big\langle E_L( \cdot) f, g\big\rangle\Big)(2^{2\kappa j} \lambda) \cdot \psi_j(2^{2\kappa j}\lambda).
$$
Note each $\mu_{N,j}$ has compact support in $(0,\infty)$ and with smooth density. Define
\begin{align*}
F_N(z)&:= \int_0^\infty m_z(\lambda) (1-\phi_0(2^{2\kappa N}\lambda)) \phi_0(2^{-2\kappa N}\lambda) \big(N\phi_0(N\cdot) * d\langle E_L(\cdot)f, g\rangle \big)(\lambda)\\
&=\sum_{j\in \mathbb{N}} \int_0^\infty \Big( (I+\Delta)^{\frac{-(1-z)s_0'-zs_1'}{2}} \big(|(I+\Delta)^{s/2}m_j|^{(1-z)\frac{r}{r_0}+ z \frac{r}{r_1}} \,\text{sgn}\,((I+\Delta)^{s/2}m_j)\big)\Big)(\lambda)\,d\mu_{N,j}(\lambda).   
\end{align*}
Note each $F_N$ only has finitely many nonzero terms. Denote by $\bar z$ the complex conjugate of $z$. Easy to verify that each term is equal to
$$
\int \int \frac{\big(|(I+\Delta)^{s/2}m_j|^{(1-z)\frac{r}{r_0}+ z \frac{r}{r_1}} \,\text{sgn}\,((I+\Delta)^{s/2}m_j)\big)(\lambda)}{1+4\pi^2 \lambda^2} \cdot (I-\partial_\xi^2)\Big((1+4\pi^2 |\xi|^2)^{\frac{-(1-z)s_0'- zs_1'}{2}} \check \mu_{N,j}(\xi)\Big) e^{-2\pi i \lambda \xi}\,d\xi\,d\lambda,
$$
which is holomorphic in $S$ and continuous on $\bar S$. Note $|(I+\Delta)^{s/2}m_j|^{(1-z)\frac{r}{r_0}+ z \frac{r}{r_1}} \,\text{sgn}\,((I+\Delta)^{s/2}m_j)\in L^{r_u}(\mathbb{R})$ due to the last inequality in (\ref{last}), and thus 
$$
\frac{\big(|(I+\Delta)^{s/2}m_j|^{(1-z)\frac{r}{r_0}+ z \frac{r}{r_1}} \,\text{sgn}\,((I+\Delta)^{s/2}m_j)\big)(\lambda)}{1+4\pi^2 \lambda^2}\in L^1(\mathbb{R}).
$$ 
Since $F_N$ converges to $F$ locally uniformly in $\bar S$, $F$ is also holomorphic in $S$ and continuous on $\bar S$. Note $|F|$ has at most polynomial growth with respect to $\text{Im}\,z$ on $\bar S$.

We have
\begin{align*}
&\big\|m_{iv}(L)\big\|_{L^{p_0}(X) \to L^{p_0}(X)} \lesssim_{\theta, s_0,s_1,s} (1+v^2) \|m\|_{L^r_{s, \text{sloc}}}^{\frac{r}{r_0}},\\
&\big\|m_{1+iv}(L)\big\|_{L^{p_1}(X) \to L^{p_1}(X)} \lesssim_{\theta, s_0,s_1,s} (1+v^2) \|m\|_{L^r_{s, \text{sloc}}}^{\frac{r}{r_1}}.    
\end{align*}
Therefore by Stein's interpolation of analytic families of operators,
$$
\big\|m(L)\big\|_{L^{p}(X) \to L^{p}(X)} \lesssim_{\theta, s_0, s_1, s} \|m\|_{L^r_{s, \text{sloc}}}.
$$
\end{proof}

\bibliographystyle{amsalpha}
\bibliography{ref}

\vspace{2em}
\noindent
Lingxiao Zhang\\
Department of Mathematics\\
University of Connecticut\\
341 Mansfield Rd, Storrs, CT, 06269, USA\\
email: \href{mailto:lingxiao.zhang@uconn.edu}{lingxiao.zhang@uconn.edu}\\
MSC: primary 42B15, 43A85; secondary 42B35

\end{document}